\newcommand{\bbz}{\mathbb{Z}}
\newcommand{\bbr}{\mathbb{R}}
\newcommand{\FF}{\mathbb{F}}
\renewcommand\S{{\mathcal S}}
\newcommand\X{{\mathcal X}}
\DeclareMathOperator\Des{Des}
\DeclareMathOperator\maj{maj} 
\DeclareMathOperator\Reg{Reg}
\newcommand{\la}{{\lambda}}
\newcommand{\al}{{\alpha}}
\newcommand{\si}{{\sigma}}
\newcommand{\om}{{\omega}}
\newcommand{\Lie}{\operatorname{Lie}}
\newcommand{\Ind}{\operatorname{Ind}}
\newcommand{\Pl}{M_{\rm Planch}}
\newcommand{\Prob}{{\rm Prob}}
\def\magenta#1{{\color{magenta}{#1}}}
\theoremstyle{plain}
\newtheorem{theorem}{Theorem}[section]
\newtheorem{proposition}[theorem]{Proposition}
\newtheorem{lemma}[theorem]{Lemma}
\newtheorem{corollary}[theorem]{Corollary}
\newtheorem{conjecture}[theorem]{Conjecture}
\newtheorem{problem}[theorem]{Problem}
\theoremstyle{definition}
\newtheorem{definition}[theorem]{Definition}
\theoremstyle{remark}
\newtheorem{remark}[theorem]{Remark}
\newcommand{\id}{{\operatorname{id}}}
\def\CC{{\mathbb C}}
\newcommand{\ve}{\varepsilon}
\newcommand{\ch}{{\operatorname{ch}}}
\newcommand{\fix}{{\operatorname{fix}}}
\newcommand{\SYT}{{\operatorname{SYT}}}
\newcommand{\todo}[1]{\vspace{5 mm}\par \noindent
	\marginpar{\textsc{ToDo}} \framebox{\begin{minipage}[c]{0.95
				\textwidth}
			#1 \end{minipage}}\vspace{5 mm}\par}
\title{Asymptotics of higher Lie characters}
\author[R.\ M.\ Adin]{Ron M.\ Adin}
\address{Department of Mathematics, Bar-Ilan University, Ramat-Gan 52900, Israel}
\email{radin@math.biu.ac.il}
\author[Y.\ Roichman]{Yuval Roichman}
\address{Department of Mathematics, Bar-Ilan University, Ramat-Gan 52900, Israel}
\email{yuvalr@math.biu.ac.il}
\author[N.\ Tsilevich]{Natalia Tsilevich}
\address{Department of Applied Mathematics, Braude College of Engineering, Karmiel 2161002, Israel} 
\email{natalia.tsilevich@gmail.com}
\date{September 16, 2025}
\thanks{Part of this research was done while N.~Tsilevich was hosted by  Bar-Ilan University in the framework of the Shapira program.}
\keywords{Asymptotic representation theory, symmetric group, higher Lie characters, conjugacy representation,  Thrall's problem}
\begin{document}

\begin{abstract} 
    Higher Lie characters form a distinguished family of symmetric group characters, which appear in many areas of algebra and combinatorics. An old open problem of Thrall is to decompose them into irreducibles. 
    We propose a novel asymptotic approach to this problem, showing that many families of 
    higher Lie characters 
    tend to be proportional to the regular character. 
    In particular, a random higher Lie character 
    tends, in probability, to be proportional to the regular character.
\end{abstract}

\maketitle

\tableofcontents

\section{Introduction}

\subsection{Higher Lie characters and Thrall's problem}

Higher Lie characters form a distinguished family of symmetric group characters. 
Their study can be traced back to Schur~\cite{Schur}, and then to  papers in Lie theory by Thrall~\cite{Thrall} 
and others. 
The important special case of Lie characters was thoroughly studied by Klyachko~\cite{Klyachko}, Kra\'skiewicz and Weyman~\cite{KraskiewiczWeyman}, and Garsia~\cite{Garsia}, who pointed out 
their significant role in combinatorics. 
A new era in the study of higher Lie characters began when Gessel and Reutenauer~\cite{GR93} presented them by means of quasisymmetric functions, leading to surprising applications in various areas of algebra and combinatorics: Lyndon words~\cite[Ch.~7]{Reutenauer}, \cite[Exercise 7.89]{Stanley_ECII}, root enumeration~\cite[Ex.~7.69]{Stanley_ECII}, construction of Gelfand models~\cite{Inglis}, permutation statistics 
and card shuffling~\cite{Saliola}. 
Strong connections were found to the Whitney homology of the partition lattice~\cite{Stanley1982},  
the cohomology of configuration spaces~\cite{Hersh}, 
and Varchenko--Gelfand and Orlik--Solomon algebras~\cite{Almousa}. 




For a partition $\lambda\vdash n$,
denote by $\chi^\lambda$ and by $\psi^\lambda$ the irreducible character and the higher Lie character indexed by $\lambda$, respectively. 

\begin{problem} {\rm (Thrall~\cite{Thrall})}
Find a combinatorial interpretation of the inner products 
$c_\nu^{\lambda}:=\langle \psi^\lambda,\chi^\nu\rangle$.
\end{problem}

This 83 years old open problem of Thrall 
has received significant attention over the years; 
see, e.g., \cite{Brandt, Klyachko, KraskiewiczWeyman, Garsia, GR93, Reutenauer, Sundaram94, Schocker}. 
Solutions are known for distinguished families of partitions.  
The partitions $\lambda = (2^k 1^{n-2k})$ 
yield $c_\nu^\lambda \in \{0,1\}$, an important multiplicity-free property that was applied to construct Gelfand models~\cite{Inglis}. 
The case $\lambda = (n)$ was solved by Kra\'{s}kiewicz and Weyman~\cite{KraskiewiczWeyman}, using the major index statistic; see Theorem~\ref{thm:KW} below. 
The best general result so far is Schocker's expansion~\cite[Theorem~3.1]{Schocker}, which 
however involves signs and rational coefficients.
Thrall's problem was approached from different viewpoints in the last decade; 
see, e.g., \cite{Hersh, Reiner-Banff, AS_Thrall, Sundaram18, 
AHR, Lim, Pak, Armon, Amendola, Commins}.  



In this paper we propose a novel asymptotic approach to Thrall's problem. 
Instead of focusing on the multiplicities $\langle \psi^\la,\chi^\nu \rangle$ for fixed finite partitions $\la$ and $\nu$, we study their asymptotic behavior as both partitions grow. 
It turns out that, for many families of partitions $\la$,  
the higher Lie character $\psi^\la$ 
tends to be proportional 
to the regular character.





\subsection{Asymptotic proportionality of characters}\label{sec:tendtoregular}


There are various ways to measure how asymptotically close two character sequences $\rho = (\rho_n)$ and $\rho' = (\rho'_n)$ are.
Our choice is
to compare multiplicities 
of irreducible characters,
$\langle \rho_n,\chi^{\nu^{(n)}} \rangle$ and $\langle \rho'_n,\chi^{\nu^{(n)}} \rangle$,
where the sequence of diagrams $\nu = (\nu^{(n)})$ is typical with respect to the Plancherel measure; see Subsection~\ref{sec:Plancherel} for more details about this measure. 
%
%
The choice of Plancherel-typical ``test characters'' $\chi^\nu$ is based on the following consideration.
The Plancherel probability of a 
diagram $\nu$ is proportional to the dimension of the isotypic component of the regular representation $\Reg_n$ corresponding to $\chi^\nu$, which is $(\dim \chi^\nu)^2$. Thus, from a representation-theoretic point of view, it is natural to assume that a ``randomly chosen'' irreducible character $\chi^{\nu}$ is distributed according to the Plancherel measure.
We therefore introduce the following definition.

\begin{definition}
    Two character sequences $\rho = (\rho_n)$ and $\rho' = (\rho'_n)$, where $\rho_n$ and $\rho'_n$ are characters of $\S_n$, are said to be {\em asymptotically proportional} if 
    \begin{equation*}
        \frac{\langle \rho_n,\chi^{\nu^{(n)}} \rangle}{\dim\rho_n}
        \sim \frac{\langle \rho'_n,\chi^{\nu^{(n)}} \rangle}{\dim\rho'_n}
        \quad \text{as} \quad n\to\infty
    \end{equation*}
    for a Plancherel-typical sequence of Young diagrams $\nu=(\nu^{(1)},\nu^{(2)},\ldots)$.
\end{definition}

In particular, if $\rho'_n = \Reg_n$, the regular character of $\S_n$, then $\dim\Reg_n = n!$ and $\langle \Reg_n,\chi^{\nu^{(n)}}\rangle = f^{\nu^{(n)}}$.

\begin{definition}\label{def:regularity}
    A sequence of characters $\rho = (\rho_n)$ {\em tends to be regular} if it satisfies
    \begin{equation}\label{tends_to_regular}
        \frac{\langle \rho_n,\chi^{\nu^{(n)}} \rangle}{\dim\rho_n}
        \sim \frac{f^{\nu^{(n)}}}{n!}
        \quad \text{as} \quad n\to\infty
    \end{equation}
    for Plancherel-typical $\nu = (\nu^{(n)})$.   
  
\end{definition}






\subsection{Existing asymptotic results}




  



The sum of all higher Lie characters of $\S_n$, each with multiplicity $1$, is the regular character $\Reg_n$; see Theorem~\ref{t:sum_of_hLc} for a formulation in terms of symmetric functions. 
Coarse versions of Thrall's problem (see~\cite{Reiner-Banff}) aim to describe the decomposition into irreducibles of other (large) sums of higher Lie characters.
For example, the {\em derangement character} ${\mathcal D}_n$ of the symmetric group $\S_n$  is the sum of higher Lie characters over all partitions with no parts of size~$1$. Thus, the dimension of ${\mathcal D}_n$ is equal to the number $d_n$ of derangements (fixed-point-free permutations) in $\S_n$. For more about the derangement character see, e.g., \cite{DesarmenienWachs, ReinerWebb, AAER}.
Following a combinatorial interpretation by D\'esarm\'enien and Wachs~\cite{DesarmenienWachs}, 
Regev~\cite{Regev99} presented the following remarkable asymptotic result. 


\begin{theorem}\label{thm:Regev}{\rm (Regev~\cite{Regev99})}
    For Plancherel-typical $\nu$,  
    \[
        \langle {\mathcal D}_n,\chi^{\nu} \rangle
        \sim \frac{f^{\nu}}{e} 
        \quad \text{as} \quad n\to \infty.
    \]
\end{theorem}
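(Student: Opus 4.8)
The plan is to combine the Désarménien--Wachs combinatorial description of the derangement character with the asymptotic vanishing of Plancherel-typical normalized characters. The starting point is the symmetric-function identity $\ch(\mathcal{D}_n)=\sum_{j\ge 0}(-1)^j\,e_j\,h_1^{\,n-j}$, which upon pairing with $s_\nu$ and using $\langle s_\nu,\,e_j h_1^{\,n-j}\rangle=f^{\nu/(1^j)}$ (equivalently, the count of standard Young tableaux of shape $\nu$ whose first ascent is at an even position) gives
\[
\frac{\langle \mathcal{D}_n,\chi^{\nu}\rangle}{f^{\nu}}=\sum_{j\ge 0}(-1)^j\,\frac{f^{\nu/(1^j)}}{f^{\nu}} .
\]
The first thing I would record are the elementary reductions: the numbers $a_j:=f^{\nu/(1^j)}/f^{\nu}$ are weakly decreasing in $j$ (the tableaux with $1,\dots,j+1$ in the first column form a sub-collection of those with $1,\dots,j$ there) and eventually vanish, so the alternating-series estimate yields, for every $J$,
\[
\Bigl|\,\tfrac{\langle \mathcal{D}_n,\chi^{\nu}\rangle}{f^{\nu}}-e^{-1}\,\Bigr|\ \le\ \sum_{j=0}^{J+1}\Bigl|\,a_j-\tfrac{1}{j!}\,\Bigr|\ +\ \frac{2}{(J+1)!}.
\]
Hence it suffices to prove that for each fixed $j$, $a_j=f^{\nu^{(n)}/(1^j)}/f^{\nu^{(n)}}\to 1/j!$ in probability for Plancherel-random $\nu^{(n)}$; then, given $\varepsilon>0$, one picks $J$ with $2/(J+1)!<\varepsilon/2$ and applies this for $j=0,\dots,J+1$ together with a union bound.

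The heart of the matter is this per-$j$ convergence, and here I would pass to characters. From $\sum_{\lambda\vdash n-k}f^\lambda\chi^\lambda=\chi^{\Reg_{n-k}}$ and the branching rule one obtains, for any $k\le n$ and $\mu\vdash k$, the identity
\[
\frac{f^{\nu/\mu}}{f^{\nu}}=\sum_{\rho\vdash k}\frac{\chi^{\mu}(\rho)}{z_{\rho}}\cdot\frac{\chi^{\nu}\!\bigl(\rho\cup 1^{\,n-k}\bigr)}{f^{\nu}} .
\]
Taking $\mu=(1^j)$, so that $\chi^{(1^j)}$ is the sign character of $\S_j$, the term $\rho=(1^j)$ contributes exactly $\chi^{(1^j)}(1^j)/j!=1/j!$, while every other term carries a factor $\chi^{\nu}(\rho\cup 1^{\,n-j})/f^{\nu}$ with $\rho\ne(1^j)$. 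By the fundamental result of Vershik and Kerov on the asymptotics of Plancherel-typical characters (see also Biane), each such normalized character tends to $0$ in probability as $n\to\infty$; since there are only finitely many $\rho\vdash j$, a union bound gives $a_j\to 1/j!$ in probability, which is all that was needed.

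I expect the only genuine obstacle to be this last input: the control of $\chi^{\nu}(\rho\cup 1^{\,n-k})/f^{\nu}$ along a Plancherel-typical sequence. Once one grants the classical fact that the normalized character of a Plancherel-typical representation converges to $0$ on every fixed nontrivial conjugacy class --- equivalently, that Plancherel-random diagrams concentrate near the Logan--Shepp--Vershik--Kerov limit shape and the relevant free cumulants concentrate --- everything else is bookkeeping: the identity for $\ch(\mathcal{D}_n)$, the monotonicity of $f^{\nu/(1^j)}$ in $j$, and the truncation of an alternating series. A self-contained treatment would have to reprove that concentration statement, which carries the real analytic content of the theorem.
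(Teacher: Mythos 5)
Theorem~\ref{thm:Regev} is not proved in this paper at all: it is quoted from Regev~\cite{Regev99}, so there is no internal proof to compare against. Your argument is, as far as I can check, correct, and it follows a genuinely different route from Regev's. Regev worked directly with the D\'esarm\'enien--Wachs interpretation of $\langle {\mathcal D}_n,\chi^\nu\rangle$ as the number of ``desarrangement'' tableaux (first ascent even) and estimated that count asymptotically; you instead expand $\ch({\mathcal D}_n)=\sum_{j\ge 0}(-1)^j e_j h_1^{n-j}$ (a correct and standard identity, e.g.\ Reiner--Webb), use $\langle s_\nu, e_jh_1^{n-j}\rangle=f^{\nu/(1^j)}$, truncate the alternating series via the monotonicity of $a_j=f^{\nu/(1^j)}/f^\nu$, and then prove $a_j\to 1/j!$ for each fixed $j$ by the Frobenius-type identity $f^{\nu/\mu}/f^\nu=\sum_{\rho\vdash j}\chi^\mu(\rho)z_\rho^{-1}\chi^\nu(\rho\cup 1^{n-j})/f^\nu$ together with the vanishing of Plancherel-typical normalized characters on fixed nontrivial classes. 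This last step is exactly the fixed-$m$ case of the Vershik--Kerov asymptotics the paper quotes in~\eqref{eq:skewdim}, and it also follows from the character bounds (Roichman, F\'eray--\'Sniady) used throughout the paper; in fact your route yields the conclusion for every balanced sequence $\nu$, which is slightly stronger than the Plancherel-typical statement and fits the paper's framework better than Regev's original combinatorial count does.

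Two cosmetic points. First, the parenthetical identifying $f^{\nu/(1^j)}$ with the count of tableaux whose first ascent is even is misplaced: $f^{\nu/(1^j)}$ counts $T\in\SYT(\nu)$ with $1,\dots,j$ in the first column, while the ``first ascent even'' description is the D\'esarm\'enien--Wachs interpretation of the full multiplicity $\langle{\mathcal D}_n,\chi^\nu\rangle$; this aside plays no role in the argument. Second, your truncation inequality as written is slightly too strong: bounding the tail $\bigl|\sum_{j\ge J+2}(-1)^ja_j\bigr|\le a_{J+2}\le a_{J+1}$ requires absorbing $a_{J+1}\le 1/(J+1)!+|a_{J+1}-1/(J+1)!|$, so the correct bound carries an extra copy of $|a_{J+1}-1/(J+1)!|$ (equivalently, a harmless factor $2$ on that term). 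Neither point affects the validity of the proof.
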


Note that $\lim_{n \to \infty} d_n / n! = 1/e$, so that Regev's result can be written as
\[
    \frac{\langle {\mathcal D}_n,\chi^{\nu} \rangle}{\dim {\mathcal D}_n}
    \sim \frac{f^{\nu}}{n!}
    \quad \text{as} \quad n\to\infty
\]
for Plancherel-typical $\nu$.
In other words, ${\mathcal D}_n$ tends to be regular in the sense of Definition~\ref{def:regularity}.



\medskip

The action of $\S_n$ on itself by conjugation is closely related to higher Lie characters, as explained in Subsection~\ref{sec:conjugacy} below; denote by ${\mathcal C}_n$ the correspoding character.
The asymptotics of ${\mathcal C}_n$
was first studied in~\cite{AF}, where it was shown that it tends, in a certain sense, to the regular character. 
A well known problem (see, e.g., \cite{Heide}) is to decompose 
${\mathcal C}_n$ into irreducible characters. 
The following result follows from~\cite[Theorem 2.1]{R97}. 

\begin{theorem}\label{thm:Roichman}
    For Plancherel-typical $\nu$, 
\[
    \frac{\langle {\mathcal C}_n,\chi^{\nu} \rangle}{\dim {\mathcal C}_n}
    \sim \frac{f^{\nu}}{n!}
    \quad \text{as} \quad n\to\infty.
\]
\end{theorem}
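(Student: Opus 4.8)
The plan is to reduce the asymptotic statement about $\langle \mathcal{C}_n, \chi^\nu\rangle / \dim\mathcal{C}_n$ to the known character-theoretic formula for the conjugacy representation and then apply the Vershik--Kerov asymptotics of Plancherel-typical characters. Recall that the conjugacy representation $\mathcal{C}_n$ is the permutation representation of $\S_n$ acting on itself by conjugation, so its character value at $w \in \S_n$ equals the number of elements commuting with $w$, i.e.\ $\mathcal{C}_n(w) = |Z_{\S_n}(w)| = n!/|\mathrm{Conj}(w)|$ where $\mathrm{Conj}(w)$ is the conjugacy class of $w$. Consequently $\dim\mathcal{C}_n = \mathcal{C}_n(\id) = n!$, and by the orthogonality of characters,
\[
    \langle \mathcal{C}_n, \chi^\nu\rangle
    = \frac{1}{n!}\sum_{w\in\S_n} \frac{n!}{|\mathrm{Conj}(w)|}\,\overline{\chi^\nu(w)}
    = \sum_{\mu\vdash n} \chi^\nu(\mu),
\]
the sum of the (real) irreducible character values of $\chi^\nu$ over all conjugacy classes. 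So the theorem is equivalent to the statement that $\sum_{\mu\vdash n}\chi^\nu(\mu) \sim f^\nu/n!$ for Plancherel-typical $\nu$; this is precisely what I would extract from \cite[Theorem~2.1]{R97}, which controls normalized character values $\chi^\nu(\mu)/f^\nu$ uniformly enough to sum over $\mu$.

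The key steps, in order, are: (1) establish the identity $\langle \mathcal{C}_n,\chi^\nu\rangle = \sum_{\mu\vdash n}\chi^\nu(\mu)$ as above (a one-line consequence of Frobenius reciprocity / character orthogonality for a permutation character); (2) isolate the contribution of the class $\mu = 1^n$, which gives $\chi^\nu(1^n) = f^\nu$, the main term; (3) show that the remaining sum $\sum_{\mu\neq 1^n}\chi^\nu(\mu)$ is $o(f^\nu/n!)$ — equivalently, after dividing through by $f^\nu$, that $\sum_{\mu\neq 1^n}\chi^\nu(\mu)/f^\nu \to 0$ — for Plancherel-typical $\nu$; (4) conclude \eqref{tends_to_regular} with $\rho_n = \mathcal{C}_n$ using $\dim\mathcal{C}_n = n!$. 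Step~(3) is where \cite{R97} enters: one needs bounds on $|\chi^\nu(\mu)/f^\nu|$ that are strong enough to survive summation over all partitions $\mu$ of $n$, of which there are subexponentially many but still super-polynomially many, so merely knowing that each normalized value tends to $0$ is not enough — uniformity in $\mu$, with decay controlled by the number of non-fixed points of $\mu$, is essential.

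The main obstacle is therefore controlling the tail sum in step~(3) with sufficient uniformity. For a Plancherel-typical diagram $\nu$ (which by Vershik--Kerov has rows and columns of order $\sqrt{n}$ after rescaling by the limit shape), one expects $|\chi^\nu(\mu)/f^\nu|$ to be roughly of size $n^{-k/2}$ when $\mu$ has $k$ non-fixed points, by a supremum-norm-type bound on normalized characters. Summing a bound of the form $\prod_i (\text{something})^{-r_i/2}$ over all cycle types is then a geometric-series estimate, but one must check that the constants coming from \cite[Theorem~2.1]{R97} are genuinely uniform over the Plancherel-typical ensemble and that the exceptional set of atypical $\nu$ has Plancherel measure tending to zero — this bookkeeping, rather than any single hard inequality, is the crux. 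Once that uniform tail bound is in hand, steps (1)--(4) assemble immediately.
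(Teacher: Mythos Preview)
Your overall strategy --- compute $\langle \mathcal{C}_n,\chi^\nu\rangle = \sum_{\mu\vdash n}\chi^\nu(\mu)$, isolate the identity class as the main term, and bound the tail via uniform normalized-character estimates --- is exactly the content of~\cite[Theorem~2.1]{R97}, which is all the paper invokes here (no proof is given in the paper itself). So the approach is right.

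However, there is a normalization slip that you should fix. Since $\dim\mathcal{C}_n = n!$, the claim
\[
    \frac{\langle \mathcal{C}_n,\chi^\nu\rangle}{\dim\mathcal{C}_n}\sim\frac{f^\nu}{n!}
\]
is equivalent to $\sum_{\mu\vdash n}\chi^\nu(\mu)\sim f^\nu$, \emph{not} to $\sum_{\mu}\chi^\nu(\mu)\sim f^\nu/n!$ as you wrote. Correspondingly, in step~(3) you need $\sum_{\mu\neq 1^n}\chi^\nu(\mu)=o(f^\nu)$, not $o(f^\nu/n!)$; your ``equivalently, $\sum_{\mu\neq 1^n}\chi^\nu(\mu)/f^\nu\to 0$'' is the correct statement, but it is not equivalent to the $o(f^\nu/n!)$ you wrote just before. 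Once you strip out the stray $n!$, the argument is sound: the main term $\chi^\nu(1^n)=f^\nu$ is isolated, and for balanced (hence Plancherel-typical) $\nu$ the tail $\sum_{\mu\neq 1^n}|\chi^\nu(\mu)|/f^\nu$ can be controlled by the bounds of~\cite{R96, R97} stratified by the support size $k$ of $\mu$, summing $p(k)$ terms of size roughly $n^{-ck}$ as you indicate.
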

Thus, ${\mathcal C}_n$ also tends to be regular in the sense of Definition~\ref{def:regularity}.






\subsection{Main results}\label{sec:main_results}

In this paper we consider
Thrall's original problem (rather than its coarse versions) in an asymptotic setting; namely, we study the multiplicities of irreducibles in single higher Lie characters (rather than sums of characters), indexed by diagrams of growing sizes.  

Our first main result deals with sequences of higher Lie characters indexed by rectangular diagrams. 
Note that Thrall's problem may be reduced, via the Littlewood--Richardson rule, to the special case of a rectangular diagram; see, e.g., \cite{Reiner-Banff} and references therein.  

\begin{theorem}\label{main:rectangular}
Any sequence of higher Lie characters, indexed by rectangular diagrams $\la = (m^k)\vdash n$ with
$m \to \infty$ and $k = o(m)$, tends to be regular. 
\end{theorem}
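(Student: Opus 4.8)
The plan is to compute the multiplicity $\langle \psi^\lambda, \chi^\nu \rangle$ for $\lambda = (m^k)$ directly from the Gessel--Reutenauer formula, which expresses $\psi^\lambda$ in terms of a sum over multisets of primitive necklaces, and then to compare it asymptotically with $f^\nu / n! \cdot \dim \psi^\lambda$. The key structural point is that for a rectangular shape $\lambda = (m^k)$ with $k$ small relative to $m$, the higher Lie character is built essentially from the single Lie character $\psi^{(m)} = \mathrm{Lie}_m$ (the case $\lambda = (n)$ of Kra\'skiewicz--Weyman, Theorem~\ref{thm:KW}) via a plethystic or inductive-product construction: $\psi^{(m^k)}$ is, up to the action accounting for the $\S_k$ symmetry, related to $\mathrm{Ind}_{\S_m \wr \S_k}^{\S_n}$ of a tensor power of $\psi^{(m)}$. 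So the first step is to write down this precise expression, and to record $\dim \psi^{(m^k)} = \frac{n!}{k! \, m^k}$ (the number of permutations of cycle type $(m^k)$), since $\dim \mathrm{Lie}_m = (m-1)!$.

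Next I would pass to characters as functions on conjugacy classes. By Frobenius reciprocity / the formula for induced characters from wreath products, $\langle \psi^{(m^k)}, \chi^\nu\rangle$ becomes a sum over ways of partitioning a $\nu$-tableau-type combinatorial object, but the cleanest route is via symmetric functions: under the characteristic map, $\psi^{(m^k)}$ corresponds to $h_k[\mathrm{ch}(\mathrm{Lie}_m)]$ (plethysm), where $\mathrm{ch}(\mathrm{Lie}_m) = \frac{1}{m}\sum_{d \mid m} \mu(d) p_d^{m/d}$. Then $\langle \psi^{(m^k)}, \chi^\nu \rangle = \langle h_k[\mathrm{ch}(\mathrm{Lie}_m)], s_\nu \rangle$, and expanding $h_k$ in power sums and using the plethysm rules reduces the multiplicity to an explicit alternating sum over divisors and over compositions of $k$, with each term a product of values $\chi^\nu$ at suitable cycle types (dominated by the class $(m^k)$ itself and its "contaminations" by smaller divisor-cycles). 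The dominant term should be the one coming from $d = 1$ in every factor, i.e. $\frac{1}{k!\,m^k}\,\chi^\nu(m^k)$ times combinatorial bookkeeping, and the task becomes: (i) show this dominant term, normalized by $\dim \psi^{(m^k)}$, is asymptotic to $f^\nu/n!$, and (ii) show all other terms are negligible.

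For step (i), I would invoke the known asymptotics of normalized irreducible characters on Plancherel-typical diagrams: for a fixed-type permutation $\sigma$ with support $\ell$, $\chi^\nu(\sigma)/f^\nu \approx$ a product over cycles that decays like $n^{-\ell/2}$ for Plancherel-typical $\nu$ (Kerov--Vershik, Biane; this is precisely the content underlying Theorems~\ref{thm:Regev} and~\ref{thm:Roichman}). Here the support of the class $(m^k)$ is $mk = n$, which is large, so one cannot use the small-support estimates directly on $(m^k)$; rather, one uses that $\chi^\nu(m^k)/f^\nu \to 0$ fast enough, balanced against the growth of the combinatorial coefficients. The right comparison is not term-by-term but structural: since $\sum_\lambda \psi^\lambda = \mathrm{Reg}_n$ (Theorem~\ref{t:sum\_of\_hLc}) and the rectangular $\psi^{(m^k)}$ with $k = o(m)$ captures an overwhelming share of the relevant mass, one expects concentration. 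Concretely I would bound $\big| \langle \psi^{(m^k)},\chi^\nu\rangle / \dim\psi^{(m^k)} - f^\nu/n! \big|$ by expanding in power sums and controlling the sum of $|\chi^\nu(\mu)| / f^\nu$ over the cycle types $\mu$ arising from the plethysm, using the crude bound $|\chi^\nu(\mu)| \le f^\nu$ together with the sparsity of non-leading terms (there are only $O(\exp(\sqrt k))$ of them, governed by partitions of $k$ and divisors of $m$) and the extra $1/m$-type savings from each $\mu(d)$ with $d>1$; the hypothesis $k = o(m)$ is exactly what makes these error contributions vanish relative to the main term.

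The main obstacle I anticipate is step (ii): obtaining a clean quantitative bound on $\chi^\nu(m^k)/f^\nu$ for Plancherel-typical $\nu$ and on the subleading plethystic terms, since the relevant permutations have full (or near-full) support, putting us outside the regime where the standard Biane--Kerov--Vershik character estimates are usually quoted. I expect to need either a direct estimate via the Murnaghan--Nakayama rule specialized to rectangular-border-strip removals — where a Plancherel-typical $\nu$ has a limit shape (the Vershik--Kerov--Logan--Shepp curve) that controls how many $m$-ribbons can be removed and with what sign cancellation — or an appeal to a second-moment / $L^2$ argument: show $\sum_\nu \Pl(\nu)\, \big(\langle\psi^{(m^k)},\chi^\nu\rangle/\dim\psi^{(m^k)} - f^\nu/n!\big)^2 \to 0$, which by Markov's inequality yields convergence in probability and hence the stated "tends to be regular" conclusion. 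This $L^2$ route turns the problem into an inner-product computation, $\langle \psi^{(m^k)}, \psi^{(m^k)}\rangle$ versus cross terms with $\mathrm{Reg}_n$, both of which are accessible from the Gessel--Reutenauer description, and I believe it is the most robust way to finish.
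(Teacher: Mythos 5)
Your setup is sound: $L_{(m^k)} = h_k[\Lie_m]$, the power-sum formula $\Lie_m=\frac1m\sum_{d\mid m}\mu(d)\,p_d^{m/d}$, and $\dim\psi^{(m^k)}=n!/(k!\,m^k)$ are all correct, and expanding $h_k$ in power sums does reduce $\langle L_{(m^k)},s_\nu\rangle$ to a signed sum of character values over cycle types indexed by a partition of $k$ together with divisor choices. But you misidentify the dominant term. It is not $\frac{1}{k!\,m^k}\chi^\nu(m^k)$ (the contribution of the class $(m^k)$, which for balanced or Plancherel-typical $\nu$ is exponentially negligible); it is the contribution of the \emph{identity} class, coming from the partition $(1^k)$ of $k$ with $d=1$ in every factor, namely $\frac{1}{k!\,m^k}\langle p_1^{\,n},s_\nu\rangle=\frac{f^\nu}{k!\,m^k}$, which is already the target $f^\nu/z_{(m^k)}$. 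Your step (i), asking that the $(m^k)$-term be asymptotic to $f^\nu/n!$ after normalization, is asking for something false, and your own observation that $\chi^\nu(m^k)/f^\nu\to0$ contradicts it.

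The error analysis also does not go through as described. The crude bound $|\chi^\nu(\tau)|\le f^\nu$ is far too weak: for instance the term coming from the partition $(2,1^{k-2})$ of $k$ with all divisors equal to $1$ has coefficient $\frac{1}{2\,(k-2)!\,m^{k-1}}$, larger than the main coefficient $\frac{1}{k!\,m^k}$ by a factor $\frac{k(k-1)m}{2}$, and such terms carry no M\"obius factors at all, so the promised ``$1/m$-type savings'' never appear for exactly the dangerous terms. What is required is an exponential decay bound for normalized characters of balanced $\nu$ at permutations with many non-fixed points, e.g.\ Roichman's bound $|\chi^\nu(g)|/f^\nu\le\bigl[\max(\nu_1/n,\nu_1'/n,a)\bigr]^{b(n-\fix(g))}$, which yields a factor of order $q^{m}$ for \emph{each} of the $k$ slots that is not fixed; this per-slot saving (not a flat per-term bound) is what defeats the coefficient growth in the full range $k=o(m)$. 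This is precisely how the paper controls the analogous sum (its quantity $A$, bounded by $\sum_j(kq^m)^j$), after first replacing $\Lie_m$ by $\frac1m h_1^m$ with error control from Kra\'skiewicz--Weyman plus Swanson and a support argument for plethysms of near-linear shapes. Your fallback arguments do not repair this: $\psi^{(m^k)}$ does not capture ``an overwhelming share'' of $\Reg_n$ (its dimension is an exponentially small fraction of $n!$), and the $L^2$ criterion you write down involves absolute rather than relative errors, so it is vacuously small and implies nothing; a genuine second-moment argument would need the asymptotics of $\|\psi^{(m^k)}\|^2$ against $(\dim\psi^{(m^k)})^2/n!$, which you have not supplied.
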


For a precise definition of the notion {\em tends to be regular} see Definition~\ref{def:regularity} below. For  
a stronger version of this result see Theorem~\ref{thm:rectangular} below.



The second main result of this paper deals with the asymptotics of a random higher Lie character. 
As mentioned above,
the sum of higher Lie characters $\psi^\la$ over all partitions $\la \vdash n$ is the regular character $\Reg_n$. Also, $\dim\psi^\la=|C_\la|$, where $C_\la$ is the conjugacy class consisting of all permutations in $\S_n$ with cycle type $\la$. Thus, exactly the same logic as in the choice of Plancherel measure for irreducible characters $\chi^\nu$ suggests that 
a random higher Lie character $\psi^\la$ ($\la \vdash n$) be chosen with probability  $|C_\la|/n!$. 



\begin{theorem}\label{thm:main} 
     A random higher Lie character $\psi^{\la_n}$, where $\la_n \vdash n \to \infty$, 
    tends in probability to be regular.
\end{theorem}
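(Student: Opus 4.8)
The plan is to reduce the probabilistic statement to a statement about "most" partitions $\la$, and to exploit the factorization of higher Lie characters over the cycle-type multiplicities. Recall that if $\la$ has $m_i$ parts equal to $i$, then $\psi^\la = \bigodot_i \psi^{(i^{m_i})}$ (induction product over the corresponding Young subgroup), so by Frobenius reciprocity the normalized multiplicity $\langle \psi^\la, \chi^\nu\rangle / \dim \psi^\la$ is governed, via the Littlewood--Richardson / Kronecker machinery applied to $\nu$ split against a Plancherel-typical shape, by the individual rectangular "atoms" $\psi^{(i^{m_i})}$. The key analytic input is Theorem~\ref{thm:rectangular} (the stronger form of Theorem~\ref{main:rectangular}): a rectangular atom $(i^{m_i})$ already tends to be regular as soon as $i \to \infty$ with $m_i = o(i)$, and more quantitatively one should have an explicit bound on $\bigl| \langle \psi^{(i^{m_i})}, \chi^{\mu}\rangle / |C_{(i^{m_i})}| - f^\mu/(im_i)! \bigr|$ that decays once $i$ is large relative to $m_i$.

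First I would record the structure of a random partition $\la_n \vdash n$ under the measure $|C_\la|/n!$. This is exactly the Ewens-type / uniform-random-permutation cycle structure: writing $C_i$ for the number of $i$-cycles of a uniform random permutation of $[n]$, the multiplicities $m_i(\la_n)$ have the distribution of the $C_i$'s. Classical results (Goncharov, Kolchin, Arratia--Barbour--Tavaré) then give: the small cycle counts $C_1, C_2, \dots$ converge jointly to independent Poissons $\mathrm{Pois}(1/i)$; with high probability there are no "medium" cycles in a range $[L, n/L]$ contributing a non-negligible fraction of $n$; and the mass of $n$ is carried either by $O(1)$-length cycles (a bounded, Poisson-distributed number of them) or by a few large cycles of length $\Theta(n)$. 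The upshot I want is: with probability $\to 1$, $\la_n$ is a disjoint union of (a) a part of the form $(j^{m_j})$ for finitely many small $j$, each $m_j = O(1)$, plus (b) a "large" piece that is itself a rectangle $(M^1)$ or a near-rectangle with $M \to \infty$ and very few rows — precisely the regime where Theorem~\ref{thm:rectangular} applies — and the total size of the small piece is $o(n)$.

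Next I would assemble these pieces. For the small-$j$ atoms $(j^{m_j})$ with $j, m_j = O(1)$: each individual $\psi^{(j^{m_j})}$ is a \emph{fixed} character of a bounded symmetric group, and a bounded induction product of fixed characters, when induced up against a Plancherel-typical $\nu^{(n)}$, behaves asymptotically like the regular character of that bounded group times $\Reg_n$ in the relevant sense — this is the same phenomenon used in Regev's and Roichman's theorems (Theorems~\ref{thm:Regev} and~\ref{thm:Roichman}) and follows from the stability of Plancherel-typical restriction coefficients (the restriction of $\chi^{\nu^{(n)}}$ to $\S_k \times \S_{n-k}$, reduced to $\S_k$, is asymptotically $k!/k! \cdot$ uniform — concretely, $\langle \mathrm{Res}_{\S_k}\chi^{\nu^{(n)}}, \chi^\alpha\rangle \sim f^\alpha \cdot f^{\nu^{(n)}}/\text{(something)}$). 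For the large near-rectangular piece, Theorem~\ref{thm:rectangular} directly supplies asymptotic regularity with a rate. Then I would combine "small piece tends to be regular" with "large piece tends to be regular" using multiplicativity of the normalization $\dim \psi^\la = \prod_i |C_{(i^{m_i})}|$ and the compatibility of "tends to be regular" with induction products — i.e., prove a lemma: if $\rho_n$ on $\S_n$ tends to be regular and $\rho'_{k_n}$ on $\S_{k_n}$ with $k_n = o(n)$ tends to be regular (or is a bounded fixed character), then $\rho_n \odot \rho'_{k_n}$ tends to be regular on $\S_{n+k_n}$. Finally, a union bound over the $o(1)$-probability bad events and a standard $\varepsilon$--$\delta$ "convergence in probability" argument (good event has probability $\to 1$, and on it the normalized discrepancy is $\to 0$) closes the proof.

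The main obstacle I anticipate is the uniformity: "tends to be regular" is an asymptotic statement along $n \to \infty$, but here the decomposition of $\la_n$ into atoms has a \emph{random} number of atoms with \emph{random} sizes, so I cannot just quote Theorem~\ref{thm:rectangular} and the stability-of-induction results pointwise — I need an effective, quantitative version with error bounds that are uniform over the family of partition shapes occurring with high probability. Concretely, the hard part is to show that the product $\prod_i \bigl(1 + \varepsilon_i(n)\bigr)$ of the per-atom normalized discrepancies stays $1 + o(1)$: one must control the (finitely many, but growing-in-number as a tail) small atoms simultaneously, which is where the Poisson approximation of $(C_1, C_2, \dots)$ and a careful truncation of the cycle lengths must be married to the error terms in Theorem~\ref{thm:rectangular} and in the Plancherel restriction asymptotics. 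I expect this to require splitting the cycle lengths at a slowly growing threshold $L = L(n) \to \infty$, handling $\le L$ cycles by Poisson/fixed-character arguments and $> L$ cycles (of which there are $O(n/L)$ in count but really just $O(1)$ that matter) by the rectangular theorem, and checking that the accumulated multiplicative error is $1 + o(1)$ with probability $\to 1$.
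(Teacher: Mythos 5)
Your proposal has a genuine gap, and it lies exactly at the point you flag as the ``main obstacle''. First, the structural claim about the random partition is wrong in the decisive place: under the measure $|C_\la|/n!$ the large parts of $\la_n$ follow a Poisson--Dirichlet profile, so there are typically several cycles of length $\Theta(n)$ with very different (in particular, pairwise distinct) lengths. The large piece is therefore \emph{not} a rectangle or near-rectangle, and Theorem~\ref{thm:rectangular} (which is stated for exact rectangles $(m^k)$) simply does not apply to it; in fact the rectangular theorem plays no role in the paper's proof of Theorem~\ref{thm:main}. The probabilistic fact one actually needs is the opposite of near-rectangularity: with high probability all cycles of length above a slowly growing threshold are pairwise \emph{distinct} (the paper's Lemma~\ref{lem:equalcycles}), so that $L_\mu$ factors as a product of ordinary one-row Lie characters $\Lie_{m_i}$.

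Second, even granting that each large atom $\Lie_{m_i}$ tends to be regular (Theorem~\ref{prop:Lie}), your scheme has no tool to combine them. Your proposed lemma glues $\rho_n$ on $\S_n$ with $\rho'_{k_n}$ on $\S_{k_n}$ only for $k_n=o(n)$ (and the paper's Gluing Lemma in fact requires $k=o(n^{1/4})$), whereas here one must assemble up to $\sim\ln n$ atoms, several of them of size $\Theta(n)$ each. Splitting a balanced $\nu$ against $\S_k\times\S_{n-k}$ with $k=\Theta(n)$ does not give the clean proportionality of restriction coefficients you invoke; controlling the resulting multi-Littlewood--Richardson sums, including the contributions of ``bad'' (near one-row/one-column) constituents, uniformly over a growing number of factors, is precisely the content of the paper's Theorem~\ref{thm:distinct} on diagrams with distinct row lengths, and nothing in your outline replaces it. The paper's route is: split $\la(\si_n)=\mu\cup\tau$ at a threshold $f(n)$ (e.g.\ $\ln^3 n$); on an event of probability $1-o(1)$ the parts of $\mu$ are distinct and the number of cycles is at most $g(n)$, so Theorem~\ref{thm:distinct} gives uniform regularity for $\psi^\mu$; the leftover $\tau$ has size $o(n^{1/4})$ and is absorbed by the Gluing Lemma, which notably does \emph{not} require $\psi^\tau$ itself to be regular (small pieces with repeated parts, e.g.\ $(2^k)$, are not), only the identity $\langle L_\tau,h_1^k\rangle=k!/z_\tau$ together with the skew-dimension estimate of Lemma~\ref{lem:strong_Vershik_Kerov}. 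Your plan would need these two missing ingredients (distinctness of large parts plus a uniform multi-factor theorem for distinct parts) to go through.
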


For stronger quantitative versions, see Theorems~\ref{thm:main1} and~\ref{thm:main_general} below. 


To prove Theorem~\ref{thm:main} we analyze, first, diagrams with distinct row lengths, and show that under mild conditions they tend to be regular (Theorem~\ref{thm:distinct} below).  
Another important tool in the proof of Theorem~\ref{thm:main} is the Gluing Lemma (Lemma~\ref{l:gluing} below),
which allows us to glue relatively small diagrams to 
large diagrams 
and preserve the tending-to-regularity property. 
An immediate consequence of the Gluing Lemma is a sufficient condition for higher Lie characters of 
hook shapes to tend to be regular (Corollary~\ref{cor:hooks}).

\begin{remark}
    Theorems~\ref{main:rectangular} and~\ref{thm:main}, as well as all other results in this paper, hold also when higher Lie characters are replaced by any other characters induced from linear characters of centralizers, in particular for the characters of the conjugacy action of $\S_n$ on its conjugacy classes;
    see Theorem~\ref{thm:conjugacy} and Corollary~\ref{cor:conjugacy} below. 
\end{remark}

For the benefit of the reader, here is a list of other results in this paper: 
Theorem~\ref{prop:Lie} (one-row diagrams), 
Theorem~\ref{thm:distinct} (distinct row lengths),
Lemma~\ref{l:gluing} (Gluing Lemma),
Corollary~\ref{cor:hooks} (hook shapes),
Corollary~\ref{thm:main_cor1} (good bounds on the error term or, alternatively, the error probability in Theorem~\ref{thm:main_general}),
Proposition~\ref{lem:virtual} 
and Conjecture~\ref{conj:virtual} (virtual permutations),
Theorem~\ref{thm:conjugacy} (conjugacy characters),
and Corollary~\ref{cor:conjugacy} ($\S_n$-actions on cohomology).

The rest of the paper is organized as follows. 
In Section~\ref{sec:preliminaries} we 
give some necessary background,
and introduce the concept of characters tending to be regular. 
In Section~\ref{sec:1row} we refine a result of Swanson regarding the classical (one-row) Lie character.
In Section~\ref{sec:rectangular} we prove our first main result, Theorem~\ref{thm:rectangular} (a strong version of Theorem~\ref{main:rectangular}), concerning rectangular diagrams.
Sections~\ref{sec:distinct} and~\ref{sec:gluing} contain two useful results, regarding diagrams with distinct parts and the Gluing Lemma. 
These results are used in Section~\ref{sec:asymp} for the proof of Theorem~\ref{thm:main_general}, which implies Theorem~\ref{thm:main}.
Variants of the main results appear in Section~\ref{sec:variations}, and Section~\ref{sec:questions} concludes with some further questions.

\section{Preliminaries}\label{sec:preliminaries}

\subsection{Notation}

We identify partitions with the corresponding Young (or Ferrers) diagrams.
For a 
partition
$\la = (\la_1, \la_2, \dots) = (1^{m_1} 2^{m_2} \cdots) \vdash n$,
\begin{itemize}
    \item 
    $\la_1$ and $\la'_1$ are the lengths of the first row and the first column of $\la$, respectively;
    \item 
    $\chi^\la$ is the irreducible character of $\S_n$ corresponding to $\la$;
    \item 
    $f^\la=\dim\chi^\la$;
    \item 
    $C_\la$ is the conjugacy class in $\S_n$ consisting of all the permutations with cycle type $\la$;
    \item 
    $z_\la:=\prod_ii^{m_i}m_i!$, so that $|C_\la| = n! / z_\la$.
\end{itemize}
 
By $e_\la, h_\la, p_\la, s_\la$ we denote the elementary, complete homogeneous, power sum, and Schur symmetric functions, respectively.



\subsection{Higher Lie characters}\label{sec:hLc}

For $\la=(i^{m_i})\vdash n$, the higher Lie character $\psi^\la$ can be constructed as follows. 
Denote by $Z_\la$ the centralizer in $\S_n$ of a permutation of cycle type $\la$. Then
\[
    Z_\la
    \simeq \prod_{i=1}^n\S_{m_i}[\bbz_i],
\]
where $\bbz_i$ is the cyclic subgroup of $\S_n$ generated by a cycle of length $i$, and $\S_{m_i}[\bbz_i]$ is the wreath product of $\bbz_i$ by $\S_{m_i}$. For each $i$, let $\zeta_i$ be a primitive irreducible character of $\bbz_i$. Then
\begin{equation}\label{eq:wreathproduct}
     \psi^\la
     = \Ind_{Z_\la}^{\S_n} \left(\id_{m_1}[\zeta_1] \otimes \id_{m_2}[\zeta_2] \otimes \dots \otimes \id_{m_n}[\zeta_n] \right),
\end{equation}
where $\id_{m_i}[\zeta_i]$ is the wreath product of $\zeta_i$ by the trivial character $\id_{m_i}$ of $\S_{m_i}$.

In particular, the character $\psi^{(i)}$ corresponding to the one-row 
diagram 
$\la=(i)$ is the much-studied ordinary Lie character corresponding to the action of $\S_n$ on the multilinear component of the free Lie algebra with $n$ generators.

In what follows, we also use the following formula for higher Lie characters (\cite{Thrall}):
\begin{equation}\label{eq:hlcformula}
\psi^{\la}=\Ind_{\S_{m_1}[\S_1]\times\S_{m_2}[\S_2]\times\ldots}^{\S_n}
\left(\id_{m_1}[\psi^{(1)}]\otimes\id_{m_2}[\psi^{(2)}]\otimes\ldots\right),
\end{equation}
where $\S_{m_i}[\S_i]$ is the wreath product of $\S_i$ by $\S_{m_i}$ and $\id_{m_i}[\psi^{(i)}]$ is the wreath product of the character $\psi^{(i)}$ of $\S_i$ by the trivial character $\id_{m_i}$ of $\S_{m_i}$.

Let $L_\la$ be the Frobenius image of the higher Lie character $\psi^\la$ in the algebra of symmetric functions. 
These functions are sometimes called {\em Lyndon symmetric functions}.  
For convenience, denote
$\Lie_n := L_{(n)}$, the Frobenius image of the ordinary Lie character.





The following theorem is well known; see, e.g., \cite[Theorem VII]{Thrall}.

\begin{theorem}\label{t:sum_of_hLc}
    For every $n\ge 1$
    \[
        \sum_{\la \vdash n} L_\la 
        = h_1^n 
        = \sum_{\la\vdash n} f^\la s_\la .
    \]
\end{theorem}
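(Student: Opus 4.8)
The plan is to establish the two claimed equalities separately, starting from the combinatorial/representation-theoretic meaning of higher Lie characters.

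\textbf{First equality} $\sum_{\la \vdash n} L_\la = h_1^n$. The key observation is that the higher Lie characters, taken each with multiplicity one, sum to the regular character $\Reg_n$ of $\S_n$; equivalently, on the symmetric function side, $\sum_{\la\vdash n} L_\la = \ch(\Reg_n)$, and $\ch(\Reg_n) = h_1^n = p_1^n$. To prove $\sum_\la L_\la = \ch(\Reg_n)$ directly from the definitions, I would use formula~\eqref{eq:hlcformula} (or equivalently~\eqref{eq:wreathproduct}): summing over all cycle types $\la=(i^{m_i})\vdash n$ amounts to decomposing the induction $\Ind$ according to the orbit structure, and this is precisely Thrall's original decomposition of the free associative algebra (or of the group algebra $\CC[\S_n]$ viewed via the Poincar\'e--Birkhoff--Witt basis of $\CC[\S_n]$ as a free Lie-algebra enveloping algebra). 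Concretely, the regular representation decomposes, by PBW, as a sum over multisets of Lyndon words of the corresponding induced products of Lie characters, and collecting these by the cycle type $\la$ of the associated centralizer yields exactly $\sum_\la \psi^\la = \Reg_n$. Alternatively, one can cite the known identity $\sum_\la L_\la = \sum_{d\mid \bullet}$ (the plethystic expression $\sum_{n\ge 0}\sum_{\la\vdash n} L_\la = \prod_{d\ge 1}(1-p_d)^{-\text{(number of Lyndon words of length }d)}$ evaluated against the necklace count), but the cleanest route is simply to quote the well-known fact, as the excerpt itself does ("each with multiplicity $1$, is the regular character $\Reg_n$").

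\textbf{Second equality} $h_1^n = \sum_{\la\vdash n} f^\la s_\la$. This is the symmetric-function shadow of the decomposition of the regular representation into irreducibles with multiplicity equal to dimension: $\Reg_n = \bigoplus_{\la\vdash n} (f^\la)\,\chi^\la$, hence $\ch(\Reg_n) = \sum_\la f^\la s_\la$. Since $\ch(\Reg_n)=h_1^n$ (the Frobenius characteristic of the regular character is $p_1^n/1 = h_1^n$ because $h_1 = p_1 = s_{(1)}$), the identity follows. Equivalently, this is the dual Cauchy-type specialization or simply the expansion of $s_{(1)}^n$ into Schur functions, whose coefficients are $f^\la$ by the Young rule / iterated Pieri rule (each application of $\cdot\, s_{(1)}$ adds a box in all possible ways, and the number of standard Young tableaux of shape $\la$ counts the lattice paths in Young's lattice from $\emptyset$ to $\la$).

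\textbf{Main obstacle.} There is essentially no hard step here: the statement is folklore and the proof is a matter of assembling standard facts (Frobenius characteristic of the regular representation, the PBW/Lyndon-word decomposition underlying higher Lie characters, and the iterated Pieri rule). The only point requiring a little care is making the first equality genuinely self-contained rather than citing it: one must exhibit the bijection between a PBW monomial basis of $\CC[\S_n]$ and the induced products $\id_{m_1}[\psi^{(1)}]\otimes\cdots$ appearing in~\eqref{eq:hlcformula}, i.e., verify that grouping Lyndon-word multisets by their length-multiplicity vector $(m_i)$ reproduces exactly the centralizer-induction in Thrall's formula. Given that the paper is content to cite \cite[Theorem VII]{Thrall}, I would present the short proof via Frobenius characteristics and refer to Thrall (or Reutenauer~\cite[Ch.~8]{Reutenauer}) for the PBW decomposition, rather than reproving it.
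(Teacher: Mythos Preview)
Your proposal is correct and, in fact, goes well beyond what the paper does: the paper provides no proof of this theorem at all, stating only that it is ``well known'' and citing \cite[Theorem VII]{Thrall}. Your outline via the Frobenius characteristic of the regular representation and the PBW/Lyndon decomposition is the standard justification, and your remark that the second equality is just the iterated Pieri rule (equivalently, the decomposition $\Reg_n = \bigoplus_\la f^\la \chi^\la$) is exactly right.
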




\subsection{The Plancherel measure}\label{sec:Plancherel}

The (infinite-dimensional) {\em Plancherel measure} is a distinguished measure on the space of infinite Young tableaux, 
which plays an important role in the Vershik--Kerov asymptotic representation theory \cite{VershikKerovPlanch, VershikKerovChar}. In particular, it corresponds to the character of the regular representation of the infinite symmetric group. 

This measure is defined as follows (see~\cite{VershikKerovPlanch}). 
Denote by ${\mathcal T}_n$ the space of of all sequences 
$\nu=(\nu^{(1)},\ldots, \nu^{(n)})$ of Young diagrams $\nu^{(i)}\vdash i$, where $\nu^{(i)} \subset \nu^{(i+1)}$; this is the same as the space of all standard Young tableaux with $n$ cells.
Consider the probability measure on ${\mathcal T}_n$ defined by
\[
    M_n(\nu)
    = \frac{f^{\nu^{(n)}}}{n!} 
    \qquad (\forall\, \nu\in{\mathcal T}_n).
\]
We have the natural projections $p_n:{\mathcal T}_n\to{\mathcal T}_{n-1}$, forgetting the last element of a sequence, and ${\mathcal T}:=\varprojlim{\mathcal T}_n$ is the space of infinite sequences  $\nu =(\nu^{(1)},\nu^{(2)},\ldots)$ of Young diagrams $\nu^{(n)}\vdash n$, where $\nu^{(n)} \subset \nu^{(n+1)}$.
It is easy to check that the pushforward of $M_n$ by $p_n$ coincides with $M_{n-1}$.

\begin{definition}
The measure $M:=\varprojlim M_n$ on the
space ${\mathcal T}$ of infinite sequences of Young diagrams is called the (infinite-dimensional) {\em Plancherel measure}.  
\end{definition}

The finite-dimensional marginals of $M$ coincide with the finite Plancherel measures: for every $\al \vdash n$,
\[
M \left( \left\{\nu =(\nu^{(1)},\nu^{(2)},\ldots) \in {\mathcal T} \,:\, \nu^{(n)} = \al \right\} \right) = \frac{(f^\al)^2}{n!}.
\]

\begin{remark}\label{rem:Plancherel_is_balanced}
    The famous Vershik--Kerov limit shape theorem~\cite{VershikKerovPlanch} implies that, for 
    a Plancherel-typical
    sequence of Young diagrams $\nu$, the first row and the first column of $\nu^{(n)}$ grow like $2\sqrt n$  as $n\to\infty$.
\end{remark}


\subsection{Tending to be regular}

Recall that, by Definition~\ref{def:regularity},
a sequence of characters $\rho = (\rho_n)$ tends to be regular if 
    \begin{equation}\label{eq:tends_to_regular2}
        \frac{\langle \rho_n,\chi^{\nu^{(n)}} \rangle}{\dim\rho_n}
        \sim \frac{f^{\nu^{(n)}}}{n!}
        \quad \text{as} \quad n\to\infty
    \end{equation}
for Plancherel-typical $\nu = (\nu^{(n)})$.  
However, all of the results in this paper hold under a weaker assumption on the test sequence $\nu$, namely for balanced sequences in the sense of the following definition.

\begin{definition}\label{def:balanced}\cite{DousseFeray}
    A sequence of diagrams $\nu =(\nu^{(1)},\nu^{(2)},\ldots)$ with $\nu^{(n)} \vdash n$ is said to be {\em balanced} if the lengths of the first row and the first column of $\nu^{(n)}$ are both $O(\sqrt n)$.
\end{definition}

By 
Remark~\ref{rem:Plancherel_is_balanced},
a Plancherel-typical sequence of diagrams is balanced, so if~\eqref{eq:tends_to_regular2} holds for balanced sequences $\nu$, then $\rho$ tends to be regular.


Further, if $\rho_n = \psi^{\la^{(n)}}$ is 
the higher Lie character corresponding to a diagram $\la^{(n)} \vdash n$,
then $\dim\psi^{\la^{(n)}} = |C_{\la^{(n)}}| = n!/z_{\la^{(n)}}$.
In this case
\eqref{eq:tends_to_regular2}~takes the form
\[
    \langle \psi^{\la^{(n)}},\chi^{\nu^{(n)}} \rangle
    \sim
    \frac{f^{\nu^{(n)}}}{z_{\la^{(n)}}} 
    \quad \text{as} \quad n\to \infty.
\]
For convenience, we will abbreviate this to 
\begin{equation}\label{claim}
    \langle \psi^\la,\chi^\nu\rangle\sim\frac{f^\nu}{z_\la}
    \qquad\text{or, equivalently,}\qquad
    \langle L_\la,s_\nu\rangle\sim\frac{f^\nu}{z_\la},
\end{equation}
where the second asymptotic equality is a reformulation of the first one 
in terms of (Lyndon and Schur) symmetric functions.

\subsection
{Standard Young tableaux: enumeration and bounds}
\label{sec:useful_results}



In this subsection we quote results regarding 
standard Young tableaux, which will be used in the paper. 
We begin with two theorems which, combined, essentially provide the asymptotics of the ordinary Lie characters $\psi^{(n)}$; see Theorem~\ref{prop:Lie} below.

For a skew shape $\la/\mu$, 
denote by $\SYT(\lambda/\mu)$ the set of standard Young tableaux of shape $\la/\mu$ and let $f^{\la/\mu}:=|\SYT(\la/\mu)|$. 
For $T\in \SYT(\la)$, define the {\em descent set}
\[
    \Des(T)
    := \{i \,:\, i+1 \text{ is in a lower row of } T \text{ than } i\} 
\]
and the {\em major index} 
\[
    \maj(T) := \sum_{i \in \Des(T)} i.
\]


\begin{theorem}{\rm (Kra\'skiewicz--Weyman~\cite{KraskiewiczWeyman}, see also~\cite[Theorem 8.4]{Garsia})}%
\label{thm:KW}
    For every partition $\nu\vdash n$, the multiplicity $\langle \psi^{(n)},\chi^{\nu} \rangle$ is equal to the cardinality of the set
    \[
        \{T \in \SYT(\nu) \,:\, \maj(T) \equiv 1 \pmod n\}.
    \]	
\end{theorem}

\begin{theorem}{\rm (Swanson~\cite[Theorem 1.8]{Swanson})}%
\label{thm:Swanson}
    For every partition $\nu\vdash n\ge 1$ and all $r$,
    \[
        \left| \frac{|\{T\in \SYT(\nu) \,:\, \maj(T) \equiv r \pmod n\} |}{f^\nu}-\frac{1}{n}
        \right|
        \le \frac{2n^{3/2}}{\sqrt {f^\nu}}.
    \]	
\end{theorem}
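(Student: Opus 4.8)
The plan is to pass from the tableau count to a polynomial generating function, extract the residue‑$r$ part by a roots‑of‑unity filter, and recognize the resulting root‑of‑unity evaluations as symmetric group character values, which are then controlled by the Fomin--Lulov inequality. Set $P_\nu(q):=\sum_{T\in\SYT(\nu)}q^{\maj(T)}$. By the $q$‑analogue of the hook length formula (Stanley),
\[
P_\nu(q)=q^{b(\nu)}\,\frac{\prod_{m=1}^{n}(q^m-1)}{\prod_{c\in\nu}(q^{h(c)}-1)},
\qquad b(\nu)=\sum_i(i-1)\nu_i,
\]
so in particular $P_\nu(1)=f^\nu$. With $\omega=e^{2\pi i/n}$ and $N_r:=|\{T\in\SYT(\nu):\maj(T)\equiv r\pmod n\}|$, the roots‑of‑unity filter gives $N_r=\tfrac1n\sum_{j=0}^{n-1}\omega^{-jr}P_\nu(\omega^j)$; isolating the $j=0$ term (which equals $f^\nu/n$) yields
\[
\left|N_r-\frac{f^\nu}{n}\right|\ \le\ \frac1n\sum_{j=1}^{n-1}\bigl|P_\nu(\omega^j)\bigr|,
\]
so it suffices to bound $|P_\nu(\zeta)|$ for each nontrivial $n$‑th root of unity $\zeta$ (the resulting bound on $|N_r-f^\nu/n|$ being automatically uniform in $r$).

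Fix such a $\zeta=\omega^j$ and let $e\ge2$ be its order, so $e\mid n$. Taking the limit $q\to\zeta$ in the product formula, the numerator factor $q^m-1$ vanishes precisely when $e\mid m$ (giving $n/e$ zeros), while the denominator factor $q^{h(c)}-1$ vanishes for each of the at most $n/e$ cells $c$ with $e\mid h(c)$. Hence $P_\nu(\zeta)=0$ unless $\nu$ has empty $e$‑core; when it does, the orders of vanishing agree, and using the standard fact that a partition with empty $e$‑core has its hook lengths not divisible by $e$ symmetrically distributed modulo $e$ (the residues $s$ and $e-s$ occur with combined multiplicity $2n/e$), the surviving cyclotomic factors cancel in modulus and
\[
\bigl|P_\nu(\zeta)\bigr|\ =\ \frac{(n/e)!}{\prod_{c\,:\,e\mid h(c)}\bigl(h(c)/e\bigr)}\ =\ \bigl|\chi^\nu(\sigma_e)\bigr|,
\]
where $\sigma_e\in\S_n$ has cycle type $(e^{n/e})$ and the last equality is the Murnaghan--Nakayama rule (equivalently the middle quantity is $\binom{n/e}{w_0,\dots,w_{e-1}}\prod_i f^{\nu^{(i)}}$, with $(\nu^{(i)})$ the $e$‑quotient of $\nu$). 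Thus $|P_\nu(\omega^j)|$ depends on $j$ only through $e$.

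It remains to bound $|\chi^\nu(\sigma_e)|$. By the Fomin--Lulov inequality for character values of $\S_n$ on permutations all of whose cycles have equal length --- equivalently, by a direct estimate of the hook‑length product above --- one has, uniformly over divisors $e\ge2$ of $n$,
\[
\bigl|\chi^\nu(\sigma_e)\bigr|\ \le\ 2\,n^{3/2}\sqrt{f^\nu}
\]
(the worst case being $e=2$, fixed‑point‑free involutions). Substituting into the earlier estimate and dividing by $f^\nu$ gives $\left|N_r/f^\nu-1/n\right|\le\frac{n-1}{n}\cdot 2n^{3/2}/\sqrt{f^\nu}\le 2n^{3/2}/\sqrt{f^\nu}$, which is the assertion.

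The heart of the matter is this last inequality: the naive bound $|\chi^\nu(\sigma_e)|\le(n/e)!$ is exponentially too weak for small $e$, and one must genuinely exploit the hook‑length data of $\nu$ to trade the factorial for $\sqrt{f^\nu}$ --- this is exactly what Fomin--Lulov provide (via a rearrangement of hook lengths between $\nu$ and its $e$‑quotient, or a Cauchy--Schwarz argument against $\sum_\mu(f^\mu)^2=|\mu|!$), and forcing it into the precise shape $2n^{3/2}/\sqrt{f^\nu}$ uniformly in $e$ is the part demanding care. A secondary technical ingredient is the cyclotomic cancellation in the second step, which rests on the standard but not obvious symmetry of non‑$e$‑divisible hook lengths modulo $e$ for partitions with empty $e$‑core.
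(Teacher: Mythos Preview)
The paper does not prove this statement itself; Theorem~\ref{thm:Swanson} is quoted from Swanson~\cite[Theorem~1.8]{Swanson} and used as a black box (notably in the proof of Theorem~\ref{prop:Lie}). So there is no in-paper proof to compare your attempt against.

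That said, your sketch is essentially Swanson's own argument: apply a roots-of-unity filter to the $q$-hook-length generating polynomial $P_\nu(q)$, identify its value at a primitive $e$-th root of unity with $\pm\chi^\nu(e^{n/e})$ via the $e$-core/$e$-quotient theory (this is Springer's theorem; your direct cyclotomic computation recovers it in modulus), and then invoke the Fomin--Lulov character bound. The outline is correct. The step you rightly flag as needing care---the uniform per-term bound $|\chi^\nu(\sigma_e)|\le 2n^{3/2}\sqrt{f^\nu}$---does follow from Fomin--Lulov: their inequality gives
\[
|\chi^\nu(\sigma_e)|\ \le\ \frac{(n/e)!\,e^{n/e}}{(n!)^{1/e}}\,(f^\nu)^{1/e},
\]
Stirling shows the prefactor is $O(\sqrt n)$ uniformly in $e\ge 2$ (well within $2n^{3/2}$), and $(f^\nu)^{1/e}\le\sqrt{f^\nu}$ since $e\ge 2$ and $f^\nu\ge 1$. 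Summing the $n-1$ nontrivial terms and dividing by $n f^\nu$ then yields the stated bound.
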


For further results in this direction see~\cite{
Billey} and references therein. 

The following result is due to Dousse and F\'eray~\cite{DousseFeray}.

\begin{theorem}\label{t:DousseFeray}\rm{\cite[Theorem 8]{DousseFeray}} 
    There exist constants $C'_1$ and $C'_2$ such that the following holds: if $\la \vdash n$ with $\max\{\la_1, \la'_1\} \le \al(n)$, and $\mu \vdash m$ with $\max\{\mu_1, \mu'_1\} \le \beta(m)$, such that $m < C'_1 n / \al(n)$, then
    \[
        m! \cdot \frac{f^{\la/\mu}}{f^\la f^\mu} 
        \le e^{C'_2 m \beta(m) n^{-1} \al(n)}. 
    \]
\end{theorem}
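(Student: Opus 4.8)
The plan is to deduce the bound from the Okounkov--Olshanski evaluation of skew dimensions, followed by a careful estimate of the resulting shifted Schur function that exploits the hypotheses $\max\{\la_1,\la'_1\}\le\al(n)$, $\max\{\mu_1,\mu'_1\}\le\beta(m)$ and $m<C'_1n/\al(n)$.

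First I would recall the Okounkov--Olshanski formula: for $\mu\subseteq\la$ with $|\mu|=m$ and $|\la|=n$,
\[
\frac{f^{\la/\mu}}{f^\la}=\frac{s^*_\mu(\la)}{n(n-1)\cdots(n-m+1)},
\]
where $s^*_\mu$ denotes the shifted Schur function indexed by $\mu$. Combined with the hook-length formula $f^\mu=m!\big/\prod_{c\in\mu}h_\mu(c)$, this rewrites the quantity to be bounded as
\[
m!\cdot\frac{f^{\la/\mu}}{f^\la f^\mu}=\Bigl(\,\prod_{c\in\mu}h_\mu(c)\Bigr)\cdot\frac{s^*_\mu(\la)}{n(n-1)\cdots(n-m+1)}\,,
\]
so everything reduces to bounding the right-hand side by $\exp\!\bigl(C'_2\,m\beta(m)n^{-1}\al(n)\bigr)$. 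It is worth noting the equivalent probabilistic reading: the left-hand side equals $\tfrac{m!}{(f^\mu)^2}\,\Prob\bigl[\text{the cells of a uniformly random }T\in\SYT(\la)\text{ containing }1,\dots,m\text{ form the shape }\mu\bigr]$, so the inequality says that the law of this ``initial shape'' is pointwise at most $\exp\!\bigl(C'_2m\beta(m)n^{-1}\al(n)\bigr)$ times the Plancherel measure on $\Par(m)$; intuitively, for balanced $\la$ the first $m$ steps of a uniformly random maximal chain in Young's lattice up to $\la$ imitate, multiplicatively, the Plancherel growth process.

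The heart of the matter is the estimate of $s^*_\mu(\la)$. I would start from the monomial (or excited-diagram) expansion $s^*_\mu(\la)=\sum_{\rho\vdash m}K_{\mu\rho}\,m^*_\rho(\la)$, where the shifted monomials $m^*_\rho(\la)$ differ from the ordinary ones $m_\rho(\la)$ by content shifts of size at most $\beta(m)$, and $K_{\mu,(1^m)}=f^\mu$. The dominant term is $f^\mu\,e_m(\la)$, and since $\prod_{c\in\mu}h_\mu(c)=m!/f^\mu$ and $e_m(\la)\le n^m/m!$ (Maclaurin), it contributes at most $n^m/(n(n-1)\cdots(n-m+1))\le\exp(O(m^2/n))$ to the right-hand side above. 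The remaining ingredients --- the content shifts in the $m^*_\rho$, the subdominant Kostka terms $K_{\mu\rho}m_\rho(\la)$ with $\rho\ne(1^m)$, and the lower-order part of $e_m(\la)$ itself --- must then be handled together: individually several of them can be as large as $\Theta(m^2/\sqrt n)$ relative to the main term, but they partially cancel, the surviving net error being of multiplicative size $1+O(\beta(m)\al(n)/n)$ per cell of $\mu$ (the factor $\al(n)/n$ arises because every part $\la_i$, and hence every relevant denominator, is $\ge n/\la'_1\ge n/\al(n)$, while the numerators carry content shifts of size $O(\beta(m))$). Taking logarithms and summing over the $m$ cells --- legitimate since $m<C'_1n/\al(n)$ forces $n(n-1)\cdots(n-m+1)\ge(n/2)^m$ and each factor to be $1+o(1)$ --- yields the exponent $O(m\beta(m)n^{-1}\al(n))$; the constants $C'_1,C'_2$ absorb the implied constants and the elementary (Maclaurin-type) inequalities used along the way.

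The main obstacle is exactly this bookkeeping of cancellations. A crude estimate --- replace each part of $\la$ by $\al(n)$ and each hook of $\la$ by $2\al(n)$, and bound the number of excited diagrams --- only gives $m!\,f^{\la/\mu}/(f^\la f^\mu)\le(C\al(n)^2/n)^m$, which is merely of order $C^m$ for balanced $\la$ and loses the point entirely (the asserted bound has no $C^m$ factor once $\beta(m)=o(\sqrt n)$). To reach the sharp exponent one has to retain the \emph{exact} identity $(n/\la'_1)\cdot\la'_1=n$ rather than the mere estimate $\Theta(n)$ --- i.e.\ genuinely use the cancellation between $s^*_\mu(\la)$ and the falling factorial $n(n-1)\cdots(n-m+1)$ --- and then show that the unevenness of the parts of $\la$, together with the nonzero contents of $\mu$, only inflates the product by $\exp(O(m\beta(m)\al(n)/n))$. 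Equivalently, one must control the comparison ``initial-shape law versus Plancherel measure'' at this precise rate; the shifted-symmetric-function calculus of Okounkov and Olshanski, or a step-by-step analysis of the growth process below $\la$, is the natural machinery for this, and carrying it through with all constants is the technical crux.
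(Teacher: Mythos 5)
First, note that the paper does not prove this statement at all: it is quoted verbatim as \cite[Theorem 8]{DousseFeray}, and the proof there goes through Stanley's character-sum formula $m!\,f^{\la/\mu}/(f^\la f^\mu)=\sum_{\sigma\in\S_m}\frac{\chi^\la(\sigma)}{f^\la}\cdot\frac{\chi^\mu(\sigma)}{f^\mu}$ combined with the F\'eray--\'Sniady upper bounds on normalized characters: one bounds $|\chi^\la(\sigma)|/f^\la$ by (a constant times $\al(n)/n$) to the power $m-c(\sigma)$, bounds $|\chi^\mu(\sigma)|/f^\mu$ similarly in terms of $\beta(m)/m$, and sums over $\sigma\in\S_m$ grouped by $m-c(\sigma)$; the exponential bound $e^{C'_2 m\beta(m)\al(n)/n}$ then drops out of the resulting geometric-type series. (This is exactly the toolkit the present paper uses elsewhere, e.g.\ around Lemma~\ref{lem:strong_Vershik_Kerov}.) Your route via the Okounkov--Olshanski identity $f^{\la/\mu}/f^\la=s^*_\mu(\la)/\bigl(n(n-1)\cdots(n-m+1)\bigr)$ is a genuinely different and in principle viable starting point, and your reduction of the claim to a bound on $\prod_{c\in\mu}h_\mu(c)\cdot s^*_\mu(\la)/n^{\underline{m}}$ is correct.

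However, as written your argument has a genuine gap precisely at its central step. The entire content of the theorem is the estimate of $s^*_\mu(\la)$ against the falling factorial, and there you only assert that the content shifts in $m^*_\rho(\la)$, the Kostka terms with $\rho\ne(1^m)$, and the lower-order part of $e_m(\la)$ ``partially cancel, the surviving net error being of multiplicative size $1+O(\beta(m)\al(n)/n)$ per cell of $\mu$.'' No mechanism for this cancellation is exhibited: the expansion $s^*_\mu(\la)=\sum_\rho K_{\mu\rho}m^*_\rho(\la)$ has only nonnegative contributions in the ordinary-monomial part, so the claimed cancellation must come from a careful pairing of shifted versus unshifted terms and of $e_m(\la)$ versus $n^{\underline m}$, and that pairing (with the dependence on $\beta(m)$, which enters only through the contents of $\mu$, made quantitative and uniform) is exactly what is missing. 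You yourself note that the crude version of your estimate yields only a factor $C^m$, which is strictly weaker than the stated bound when $\beta(m)\al(n)=o(n)$, and you defer the ``bookkeeping of cancellations'' as ``the technical crux.'' So the proposal is an honest plan plus a correct reformulation, but the theorem's actual content is not established; to complete it you would either have to carry out that shifted-symmetric-function analysis in detail, or switch to the character-bound argument of Dousse--F\'eray, where the needed decay in $m-c(\sigma)$ is already packaged in the F\'eray--\'Sniady inequality.
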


Letting  $\al(n) = D \sqrt n$ and $\beta(m) = m$, and denoting $d := C'_1/D$ and $b := C'_2 D$, we deduce the following.

\begin{corollary}\label{lem:sub_generic}
    For any $D > 0$ there exist constants $d > 0$ and $b > 0$ such that, 
    if $\la \vdash n$ with $\max\{\la_1, \la'_1\} \le D \sqrt n$ and $\mu \vdash m$ with $m < d \sqrt n$, then
    \[
        \frac{f^{\la/\mu}}{f^\la} 
        \le e^{b m^{2} n^{-1/2}} \cdot \frac{f^\mu}{m!}.
    \]
\end{corollary}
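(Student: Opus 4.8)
The plan is simply to specialize Theorem~\ref{t:DousseFeray} along the one-parameter family of profiles $\al(n) := D\sqrt n$ and $\beta(m) := m$, and read off the resulting bound.

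First I would set $\al(n) := D\sqrt n$, so that the Corollary's hypothesis $\max\{\la_1,\la'_1\} \le D\sqrt n$ is exactly the Theorem's hypothesis $\max\{\la_1,\la'_1\} \le \al(n)$. Next I would dispose of the Theorem's hypothesis on $\mu$: \emph{every} partition $\mu \vdash m$ satisfies $\max\{\mu_1,\mu'_1\} \le m$, since a Young diagram with $m$ cells has at most $m$ cells in any single row or in any single column. Hence with $\beta(m) := m$ the constraint $\max\{\mu_1,\mu'_1\} \le \beta(m)$ is automatic, which matches the fact that the Corollary imposes no shape restriction on $\mu$ beyond $\mu \vdash m$.

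Then I would translate the size constraint: with $\al(n) = D\sqrt n$, the inequality $m < C'_1 n / \al(n)$ becomes $m < (C'_1/D)\sqrt n$, so setting $d := C'_1/D$ makes the Corollary's assumption $m < d\sqrt n$ imply the Theorem's. Finally I would evaluate the exponent in the Theorem's conclusion under these substitutions:
\[
C'_2\, m\, \beta(m)\, n^{-1}\, \al(n) = C'_2\, m^2\, n^{-1}\cdot D\sqrt n = (C'_2 D)\, m^2\, n^{-1/2},
\]
so with $b := C'_2 D$ the Theorem yields $m!\, f^{\la/\mu}/(f^\la f^\mu) \le e^{b m^2 n^{-1/2}}$; multiplying both sides by $f^\mu/m!$ gives precisely $f^{\la/\mu}/f^\la \le e^{b m^2 n^{-1/2}}\, f^\mu/m!$. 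Since $C'_1, C'_2$ are absolute constants, $d$ and $b$ depend only on $D$, as required.

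There is no genuine obstacle here: the statement is obtained by unwinding the general estimate of Theorem~\ref{t:DousseFeray} along a specific family of profiles. The only point deserving a moment's thought is the admissibility of the choice $\beta(m) = m$ — that it is not an additional restriction on $\mu$ — and this is immediate.
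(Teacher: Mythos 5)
Your proof is correct and is exactly the paper's derivation: the paper obtains the Corollary from Theorem~\ref{t:DousseFeray} by the same substitutions $\al(n)=D\sqrt n$, $\beta(m)=m$, with $d:=C'_1/D$ and $b:=C'_2 D$. Your only addition is spelling out that $\max\{\mu_1,\mu'_1\}\le m$ holds automatically, which the paper leaves implicit.
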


In several places in this paper we need a strengthening of the following result of Vershik and Kerov~\cite{VershikKerovChar} about dimensions of characters: 
if $m$ is fixed then, for every $\beta\vdash m$,
\begin{equation}\label{eq:skewdim}
    \frac{f^{\nu/\beta}}{f^\nu} 
    = \frac{f^\beta}{m!}(1+o(1))
    \quad \text{as} \quad n \to \infty
\end{equation}
for Plancherel-typical $\nu \vdash n$.
However, we need~\eqref{eq:skewdim} in the situation where $m$ is not fixed but grows not very fast, and $\nu$ is balanced but not necessarily Plancherel-typical. 
%
The case we need is covered by~\cite[Theorem 7]{DousseFeray}, with $r = 0$, $\al(n) = D \sqrt n$ and $\beta(m) = m$; it follows easily from its proof that the bound is uniform on $\beta$. 

\begin{lemma}\label{lem:strong_Vershik_Kerov}
    If $m = o(n^{1/4})$, $\nu \vdash n$ is balanced, and $\beta \vdash m$ is arbitrary, then~\eqref{eq:skewdim} holds
    uniformly on $\beta$ as $n \to \infty$,
    with error term $O(m^{2}n^{-1/2})$.
\end{lemma}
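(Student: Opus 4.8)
The plan is to deduce this from the skew-dimension estimate of Dousse and F\'eray quoted above, after a short preliminary observation, and then to isolate the point where genuine work is needed.

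First I would record that, under the hypotheses, every $\beta\vdash m$ is automatically contained in $\nu$, so that $\nu/\beta$ is a well-defined skew shape. Write $\max\{\nu_1,\nu'_1\}\le D\sqrt n$ and let $k$ be the side of the Durfee square of $\nu$ (the largest $k$ with $\nu_k\ge k$). Counting the cells of $\nu$ inside the $k\times k$ square, strictly below it, and strictly to its right gives $n\le k^2+2Dk\sqrt n$, hence $k\ge(\sqrt{D^2+1}-D)\sqrt n$. Since $m=o(n^{1/4})=o(\sqrt n)$, for all large $n$ one has $m\le k$, and therefore $\beta\subseteq[m]\times[m]\subseteq[k]\times[k]\subseteq\nu$. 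In particular the splitting identity $\sum_{\beta\vdash m}f^\beta f^{\nu/\beta}=f^\nu$ runs over \emph{all} $\beta\vdash m$.

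Next I would apply \cite[Theorem 7]{DousseFeray} with $r=0$, $\al(n)=D\sqrt n$ and $\beta(m)=m$. Its hypotheses $\max\{\nu_1,\nu'_1\}\le\al(n)$ and $\max\{\beta_1,\beta'_1\}\le\beta(m)$ hold by balancedness and trivially, respectively; and $m=o(n^{1/4})$ is exactly the range in which the error term $O\!\big(m\,\beta(m)\,n^{-1}\al(n)\big)=O(m^2n^{-1/2})$ is $o(1)$. This yields $f^{\nu/\beta}/f^\nu=(f^\beta/m!)(1+O(m^2n^{-1/2}))$, i.e.\ \eqref{eq:skewdim} with the stated error term. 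The only thing not literally in the quoted statement is \emph{uniformity} in $\beta$, but an inspection of the Dousse--F\'eray proof shows that the implied constant depends only on $D$ and on the linear growth rate $\beta(m)=m$, not on the individual diagram $\beta$, so a single estimate serves all $\beta\vdash m$ at once.

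The step I expect to be the real obstacle, and the reason one cannot get by with the elementary tools already in hand, is the multiplicative \emph{lower} bound. Corollary~\ref{lem:sub_generic} gives, uniformly in $\beta$, the upper bound $f^{\nu/\beta}/f^\nu\le e^{bm^2n^{-1/2}}(f^\beta/m!)$; isolating the $\beta_0$-term in $\sum_{\beta\vdash m}f^\beta f^{\nu/\beta}=f^\nu$ and bounding every other term from above through this inequality together with $\sum_{\beta\vdash m}(f^\beta)^2=m!$ yields, essentially for free, the \emph{additive} estimate $\big|f^{\nu/\beta}/f^\nu-f^\beta/m!\big|=O(m^2n^{-1/2})$ for every $\beta$. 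This is far weaker than the asserted relative error precisely for diagrams $\beta$ whose dimension $f^\beta$ is near its minimum (near-hook shapes), where $f^\beta/m!$ can be of order $1/m!$; pinning down $f^{\nu/\beta}/f^\nu$ to within a factor $1+O(m^2n^{-1/2})$ for those $\beta$ is exactly what the finer Dousse--F\'eray argument achieves, by controlling the local behaviour of a uniform random standard tableau of $\nu$ near its first $m$ entries --- equivalently, by showing that the sub-diagram occupied by the entries $1,\dots,m$ is approximately Plancherel-distributed on partitions of $m$.
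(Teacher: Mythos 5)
Your proposal is correct and follows essentially the same route as the paper: the paper also disposes of this lemma by invoking \cite[Theorem 7]{DousseFeray} with $r=0$, $\al(n)=D\sqrt n$, $\beta(m)=m$ (yielding the error $O(m\,\beta(m)\,\al(n)/n)=O(m^{2}n^{-1/2})$) and remarking that uniformity in $\beta$ follows from inspecting that proof. Your additional observations --- the Durfee-square argument showing $\beta\subseteq\nu$, and the explanation of why Corollary~\ref{lem:sub_generic} alone cannot give the multiplicative two-sided bound --- are sound but supplementary to what the paper records.
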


\section{The classical one-row Lie character}\label{sec:1row}



We begin with the proof of~\eqref{claim} for 
the ordinary Lie character 
$\psi^{(n)}$,
corresponding to the Lyndon symmetric function $\Lie_n = L_{(n)}$. 
In this case \eqref{claim} immediately 
follows from Theorems~\ref{thm:KW} and~\ref{thm:Swanson},  
but 
we want a good estimate of the error term.


\begin{theorem}\label{prop:Lie}
    The sequence of 
    ordinary Lie characters $\psi^{(n)}$ 
    tends to be regular. 
    Furthermore, as $n \to \infty$,
    \[
        \langle \Lie_n,s_\nu \rangle
        \sim \frac{f^\nu}{n} 
    \]
    for balanced (and hence for Plancherel-typical) $\nu \vdash n$. 
    In fact,
    \[
        \langle \Lie_n,s_\nu \rangle
        = \frac{f^\nu}n(1+r_\nu)
    \]
    with the following two bounds on the error term $r_\nu$.
    \begin{enumerate}
    \item[\rm(a)] 
        For every $s > 0$ there exist constants $N > 0$ and $c > 0$ (both depending only on $s$) such that, for every $n \ge N$, if $\nu \vdash n$ with $\nu_1, \nu'_1 \le n-c$ then 
        \begin{equation}\label{swanson}
            |r_\nu| \le n^{-s}.
        \end{equation}
    
    \item[\rm(b)] 
        For balanced $\nu\vdash n$, there exists a constant $C>0$ (depending only on the constant in $O(\sqrt n)$ in Definition~\ref{def:balanced}) such that
        \begin{equation}
            |r_\nu| 
            \le \left(\frac{C}{n}\right)^\frac{n}{4}.
        \end{equation}
    
    \end{enumerate}
\end{theorem}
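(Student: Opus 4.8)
The plan is to combine Theorem~\ref{thm:KW} with Theorem~\ref{thm:Swanson} for part~(a), and to exploit a sharper, more elementary bound on $f^\nu$ for balanced $\nu$ to get the super-exponentially small error in part~(b). By Theorem~\ref{thm:KW} we have $\langle \Lie_n, s_\nu\rangle = \#\{T\in\SYT(\nu): \maj(T)\equiv 1\pmod n\}$, so with $r_\nu$ defined by $\langle \Lie_n,s_\nu\rangle = \tfrac{f^\nu}{n}(1+r_\nu)$, Theorem~\ref{thm:Swanson} (applied with $r=1$) gives immediately
\[
    |r_\nu| = \frac{n}{f^\nu}\left|\#\{T:\maj(T)\equiv 1\}-\frac{f^\nu}{n}\right| \le \frac{2n^{5/2}}{\sqrt{f^\nu}}.
\]
So everything reduces to lower bounds on $f^\nu$.

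For part~(a): I would invoke a standard lower bound showing that if $\nu\vdash n$ has $\nu_1,\nu_1'\le n-c$, then $f^\nu$ grows at least like a fixed power of $n!$ (equivalently, $\log f^\nu \gg n\log n$ once $c$ is large) — or more simply, that $f^\nu$ is bounded below by a rapidly growing function of $n$ when $\nu$ avoids the near-trivial shapes $(n), (n-1,1),\dots$ and their transposes. The point is purely that removing from consideration the $O(1)$-many shapes with $\nu_1$ or $\nu_1'$ within $c$ of $n$ leaves only shapes with $f^\nu\ge n^{2s+5}$, say, for $n$ large depending on $s$; feeding this into the Swanson bound $|r_\nu|\le 2n^{5/2}/\sqrt{f^\nu}$ yields $|r_\nu|\le n^{-s}$. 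One clean way to organize this: among shapes with $\nu_1,\nu_1'\le n-c$, the smallest $f^\nu$ is attained (for $c$ fixed, $n$ large) at a near-hook or near-two-row shape such as $(n-c,c)$ or $(n-c,1^c)$, for which $f^\nu$ is a polynomial in $n$ of degree $\ge c-1$ or $\ge c$; choosing $c = 2s+6$ suffices.

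For part~(b): here $\nu$ is balanced, so $\nu_1,\nu_1'\le D\sqrt n$, and one has the much stronger lower bound $f^\nu \ge n!/(\nu_1!\,\nu_1'!\cdots)$-type estimates; concretely, for balanced $\nu$ one knows $f^\nu \ge n!\,(C'/n)^{C''\sqrt n}$, and in fact a crude bound of the shape $\log f^\nu \ge n\log n - O(\sqrt n\,\log n)$ holds. Then $2n^{5/2}/\sqrt{f^\nu} \le 2n^{5/2}\cdot n^{-n/2}\cdot (C''')^{\sqrt n} \le (C/n)^{n/4}$ for $n$ large, where $C$ depends only on $D$; the polynomial and $(\cdot)^{\sqrt n}$ factors are absorbed into the gap between the exponents $n/2$ and $n/4$. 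So the bound $|r_\nu|\le (C/n)^{n/4}$ from the statement follows once the elementary lower bound $f^\nu\ge n!\,e^{-O(\sqrt n\log n)}$ for balanced $\nu$ is in hand (this is classical, e.g.\ via the hook length formula: every hook length is at most $n$, and at most $O(\sqrt n)$ of them exceed $\sqrt n$ when $\nu$ is balanced, so $\prod h(c) \le n^{O(\sqrt n)} (\sqrt n)^{\,n} = n^{n/2+O(\sqrt n)}$).

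\textbf{Main obstacle.} The analytic heart is trivial given Swanson's theorem; the only real work is the two lower bounds on $f^\nu$. Part~(a) needs a careful but routine identification of which non-near-trivial shape minimizes $f^\nu$ and a degree count for its dimension polynomial — the subtlety is just making ``depending only on $s$'' honest and uniform in $n$. Part~(b) needs the balanced lower bound $f^\nu \ge n!\,n^{-n/2-O(\sqrt n)}$ with the implied constant controlled by $D$; the hook-length argument sketched above is the cleanest route, and the chief bookkeeping point is confirming that the polynomial-in-$n$ and $C^{\sqrt n}$ slack comfortably fits between the exponents $\tfrac n2$ and $\tfrac n4$ for all sufficiently large $n$, which it does. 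I do not anticipate any genuine difficulty beyond these estimates.
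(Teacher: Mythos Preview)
Your overall strategy is exactly the paper's: combine Theorems~\ref{thm:KW} and~\ref{thm:Swanson} to get $|r_\nu|\le 2n^{5/2}/\sqrt{f^\nu}$, then feed in a lower bound on $f^\nu$. The execution, however, has two fixable flaws.

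\textbf{Part (b): your hook-length claim is false.} For a balanced $\nu$ it is \emph{not} true that only $O(\sqrt n)$ hook lengths exceed $\sqrt n$. Take $\nu$ to be the $\sqrt n\times\sqrt n$ square: the hook at $(i,j)$ is $2\sqrt n-i-j+1$, which exceeds $\sqrt n$ for roughly $n/2$ cells. The correct (and simpler) observation, which is what the paper uses, is that \emph{every} hook length is at most $\nu_1+\nu_1'\le 2D\sqrt n$. This gives
\[
    f^\nu \;\ge\; \frac{n!}{(2D\sqrt n)^n} \;\sim\; \sqrt{2\pi n}\,\Bigl(\frac{\sqrt n}{2De}\Bigr)^{\!n},
\]
and then $|r_\nu|\le 2n^{5/2}(2De/\sqrt n)^{n/2}\le (C/\sqrt n)^{n/2}=(C^2/n)^{n/4}$, as required. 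Note also that the bound you wrote, $\prod h(c)\le n^{n/2+O(\sqrt n)}$, does not follow from either your claim or the correct one: the factor $(2D)^n$ is genuinely exponential in $n$, not $n^{O(\sqrt n)}$, though this is harmless for the final estimate.

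\textbf{Part (a): the minimizer is not identified.} You assert that among shapes with $\nu_1,\nu_1'\le n-c$ the minimum of $f^\nu$ is attained at a near-hook or near-two-row shape. This is plausible but not proved, and it is not obvious how to prove it cleanly. The paper avoids this issue entirely: it shows that $\nu_1,\nu_1'\le n-c$ forces the \emph{diagonal excess} $N(\nu):=|\nu|-\max_{(i,j)\in\nu}(i+j-1)$ to be at least $c-1$ (a short computation), and then invokes \cite[Corollary~4.13]{Swanson}, which gives $f^\nu\ge \tfrac1c\binom{n}{c-1}$ once $N(\nu)\ge c-1$. Taking $c=2s+7$ then yields $f^\nu\ge 4n^{2s+5}$ for large $n$. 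If you want to keep your direct approach, you would need an independent argument that $f^\nu\gtrsim n^{c}$ whenever $\nu_1,\nu_1'\le n-c$; one route is to note that such a $\nu$ contains a hook $(a,1^{c})$ with $a\ge 2$ and use $f^\nu\ge f^{\nu/\mu}$-type monotonicity, but this still needs care.
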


\begin{remark}
    The number of partitions $\nu \vdash n$ not satisfying the condition in Theorem~\ref{prop:Lie}(a) is bounded by a constant depending on $s$ but not on $n$.    
\end{remark}

\begin{proof}
    (a) 
    By Theorems~\ref{thm:KW} and~\ref{thm:Swanson},
    \[
        \left| \frac{\langle \psi^{(n)},\chi^{\nu} \rangle}{f^\nu}-\frac{1}{n} \right|
        \le \frac{2n^{3/2}}{\sqrt {f^\nu}}.
    \]	
    In other words,
    \[
        |r_\nu| \le \frac{2n^{5/2}}{\sqrt {f^\nu}}.
    \]
    Given a Young diagram $\nu$ and a cell $(i,j)\in\nu$, denote by $h_{i,j}^{\rm op}$ the opposite hook length at $(i,j)$, i.e., $h_{i,j}^{\rm op}:=i+j-1$. Fix $c>0$ to be chosen later, and let $n\ge 2c$.
    Observe that if $\nu_1,\nu'_1\le n-c$, then $i,j\le n-c$ while $ij\le n$, whence
    \[
        i+j-1\le n-c+\frac n{n-c}-1\le n-c+1,
    \] 
    and for the diagonal excess $N(\nu)$ of $\nu$ we obtain 
    \[  
        N(\nu):=|\nu|-\max\limits_{(i,j)\in\nu}h_{i,j}^{\rm op}\ge c-1.
    \]

    Now fix $s>0$ and take $c=2s+7$.
    By~\cite[Corollary 4.13]{Swanson}, 
    \[
        f^\nu
        \ge \frac{1}{c} \binom{n}{c-1}
        \ge 4 n^{2s+5}
    \]
    for sufficiently large~$n$ depending only on $s$. Thus, 
    \[
        |r_\nu| \le \frac{2n^{5/2}}{\sqrt {f^\nu}}
        \le \frac1{n^s}.
    \]
    

    (b) 
    As mentioned above, we always have $|r_\nu|\le 2n^{5/2}(f^\nu)^{-1/2}$. Let us bound $f^\nu$ from below for balanced~$\nu$. By Definition~\ref{def:balanced}, $\nu_1,\nu'_1\le c\sqrt n$ for some constant $c$. Thus, by the hook length formula and Stirling's formula,
    \[
        f^\nu\ge\frac{n!}{(2c\sqrt n)^n}
        \sim \sqrt{2\pi n}\left(\frac{\sqrt n}{2ce}\right)^n
        \ge \left( \frac{\sqrt n}{2ce} \right)^n, 
    \]
    whence
    \[
        |r_\nu|
        \le 2n^{5/2} \left( \frac{2ce}{\sqrt n} \right)^{\frac n2}
        \le \left( \frac{C}{\sqrt n} \right)^\frac n2.
     \qedhere
    \]
\end{proof}

\section{Rectangular diagrams}\label{sec:rectangular}


Our next goal is to prove~\eqref{claim}, with a good error estimate, for 
rectangular diagrams $\la = (m^k)$. 
Here $z_{(m^k)} = m^k k!$.

\begin{theorem}\label{thm:rectangular}
    Any sequence of higher Lie characters indexed by rectangular diagrams $\la=(m^k)$ with $m\to\infty$ and $k = o(m)$ tends to be regular. Furthermore,
    \begin{equation*}
        \langle L_{(m^k)},s_\nu \rangle
        \sim \frac{f^\nu}{m^kk!} 
        \quad \text{as} \quad m \to \infty
    \end{equation*}
    for balanced (and hence for Plancherel-typical) $\nu \vdash mk$. 
    
\end{theorem}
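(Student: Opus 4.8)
The plan is to pass to symmetric functions and exploit the plethystic description
\[
 L_{(m^k)} = h_k[\Lie_m]
\]
(plethysm of $h_k$ with the ordinary Lie symmetric function), which follows from \eqref{eq:hlcformula} together with the standard fact that $\ch\bigl(\Ind_{\S_k[\S_m]}^{\S_{mk}}(\id_k[\rho])\bigr)=h_k[\ch(\rho)]$. Applying $h_k=\sum_{\lambda\vdash k}p_\lambda/z_\lambda$ plethystically, and using the classical formula $\Lie_m=\tfrac1m\sum_{d\mid m}\mu(d)p_d^{m/d}$ (so that $p_j[\Lie_m]=\tfrac1m\sum_{d\mid m}\mu(d)p_{jd}^{m/d}$, and in particular $p_1[\Lie_m]=\Lie_m$), one isolates the main summand:
\[
 L_{(m^k)} = \frac{1}{k!}\,\Lie_m^{\,k} \;+\; \sum_{\substack{\lambda\vdash k\\\lambda\neq(1^k)}}\frac{1}{z_\lambda}\prod_{i}p_{\lambda_i}[\Lie_m].
\]
I would then show, for every balanced $\nu\vdash mk$, that pairing the first term with $s_\nu$ gives $\tfrac{f^\nu}{z_{(m^k)}}(1+o(1))$ (with $z_{(m^k)}=m^kk!$) while the remaining sum pairs with $s_\nu$ to give $o\!\left(\tfrac{f^\nu}{z_{(m^k)}}\right)$. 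No induction on $k$ is needed.

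For the main term, expand the $k$-fold product by Littlewood--Richardson:
\[
 \langle \Lie_m^{\,k},s_\nu\rangle=\sum_{\mu^{(1)},\dots,\mu^{(k)}\vdash m}c^{\nu}_{\mu^{(1)}\cdots\mu^{(k)}}\prod_{i=1}^{k}\langle\Lie_m,s_{\mu^{(i)}}\rangle .
\]
A summand vanishes unless every $\mu^{(i)}\subseteq\nu$; since $\nu$ is balanced, this forces the first row and first column of each $\mu^{(i)}$ to have length $O(\sqrt{mk})=o(m)$, so by Swanson's bound (Theorem~\ref{thm:Swanson}) and the hook length formula (precisely the estimate in the proof of Theorem~\ref{prop:Lie}(b), now with $\sqrt{mk}$ in place of $\sqrt m$) we get $\langle\Lie_m,s_{\mu^{(i)}}\rangle=\tfrac{f^{\mu^{(i)}}}{m}(1+r_{\mu^{(i)}})$ with $|r_{\mu^{(i)}}|$ super-exponentially small, uniformly over such $\mu^{(i)}$. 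Hence $\prod_i(1+r_{\mu^{(i)}})=1+o(1)$ uniformly (there are $k=o(m)$ factors), and, all summands being nonnegative, Theorem~\ref{t:sum_of_hLc} in the form $\sum_{\mu\vdash m}f^\mu s_\mu=h_1^m$ gives
\[
 \langle\Lie_m^{\,k},s_\nu\rangle=(1+o(1))\frac{1}{m^k}\sum_{\mu^{(1)},\dots,\mu^{(k)}}c^{\nu}_{\mu^{(1)}\cdots\mu^{(k)}}\prod_i f^{\mu^{(i)}}=(1+o(1))\frac{\langle h_1^{mk},s_\nu\rangle}{m^k}=(1+o(1))\frac{f^\nu}{m^k},
\]
so $\tfrac1{k!}\langle\Lie_m^{\,k},s_\nu\rangle=\tfrac{f^\nu}{z_{(m^k)}}(1+o(1))$, as desired.

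For a term with $\lambda\neq(1^k)$, write each $p_{\lambda_i}[\Lie_m]$ in the power-sum basis: then $\prod_i p_{\lambda_i}[\Lie_m]$ is a signed combination, with coefficients of modulus $\le m^{-\ell(\lambda)}$, of power sums $p_\rho$ with $\rho=\bigcup_i(\lambda_id_i)^{m/d_i}$, $d_i\mid m$. Since $\nu$ is balanced its largest hook length is $\le\nu_1+\nu'_1-1=O(\sqrt{mk})$, so by Murnaghan--Nakayama $\langle p_\rho,s_\nu\rangle=\chi^\nu_\rho=0$ unless $\lambda_id_i=O(\sqrt{mk})$ for every $i$ (so at most $O((mk)^{\ell(\lambda)/2})$ surviving tuples $(d_i)$, and the product $m^{-\ell(\lambda)}\cdot O((mk)^{\ell(\lambda)/2})\le (k/m)^{\ell(\lambda)/2}\le1$); moreover, because some $\lambda_i\ge2$, the non-unit part $\mu$ of a surviving $\rho$ satisfies $|\mu|-\ell(\mu)=\sum_i\tfrac m{d_i}(\lambda_id_i-1)\ge m$. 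I would then bound $|\chi^\nu_\rho|$ by a Féray--Śniady / Dousse--Féray normalized-character estimate for balanced diagrams (the same circle of ideas underlying Lemma~\ref{lem:strong_Vershik_Kerov}): $|\chi^\nu_\rho|\le f^\nu\,C^{\ell(\mu)}(mk)^{-(|\mu|-\ell(\mu))/2}\le f^\nu(mk)^{-m/4}$ for $mk$ large (using $|\mu|\ge2\ell(\mu)$ and $C\le(mk)^{1/4}$). Summing over the $\le2^k$ partitions $\lambda$ bounds the total error by $2^k f^\nu(mk)^{-m/4}=o\bigl(f^\nu/z_{(m^k)}\bigr)$, where the last equality is exactly the place where $k=o(m)$ enters. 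Combining the two parts yields $\langle L_{(m^k)},s_\nu\rangle=\tfrac{f^\nu}{z_{(m^k)}}(1+o(1))$.

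The main obstacle is precisely this last estimate: one needs a character estimate (not merely a dimension estimate) showing that $|\chi^\nu_\rho|/f^\nu$ is super-exponentially small for balanced $\nu$ as soon as $\rho$ contains a repeated nontrivial cycle whose total ``reduced size'' is $\gtrsim m$, and one must then play this decay off against the combinatorial blow-up and against the size of $z_{(m^k)}=m^kk!$, which is itself as large as $e^{\Theta(m)}$ when $k$ grows --- this is the tension that collapses once $k$ is of order $m$, and is what forces the hypothesis $k=o(m)$. A secondary, more routine, point is to keep all the error terms uniform over the diagrams $\mu^{(i)}$ and $\mu$ that actually occur; this is guaranteed by the uniformity in Swanson's bound and in the Dousse--Féray estimates.
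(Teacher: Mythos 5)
Your overall route is genuinely different from the paper's, and your treatment of the main term $\tfrac1{k!}\Lie_m^{\,k}$ is correct and even pleasantly streamlined: since every $\mu^{(i)}$ occurring in the multi--Littlewood--Richardson expansion is contained in the balanced $\nu$, all of them are automatically ``very good'', and Swanson's bound plus the hook length formula give uniform super-exponentially small errors, so no good/bad case split is needed there. The problem is in the error terms $\frac{1}{z_\lambda}\prod_i p_{\lambda_i}[\Lie_m]$, $\lambda\ne(1^k)$, and specifically in the character estimate you call the main obstacle and then assert: $|\chi^\nu(\rho)|\le f^\nu C^{\ell(\mu)}(mk)^{-(|\mu|-\ell(\mu))/2}$ for balanced $\nu$, where $\mu$ is the non-unit part of $\rho$. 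This is \emph{not} what F\'eray--\'Sniady or Dousse--F\'eray provide. Their estimates give decay of this type only when the support (more precisely $|\mu|$ or $|\mu|-\ell(\mu)$) is small compared to $n=mk$ (this is exactly why Lemma~\ref{lem:strong_Vershik_Kerov} carries the hypothesis $m=o(n^{1/4})$); the F\'eray--\'Sniady bound reads $|\chi^\nu(\rho)|/f^\nu\le[a\max(\nu_1/n,\nu_1'/n,(|\mu|-\ell(\mu))/n)]^{|\mu|-\ell(\mu)}$, and in your regime $|\mu|-\ell(\mu)\ge m$ can be of order $n$ (e.g.\ $\lambda=(k)$, $d=1$, $\rho=(k)^m$), where the bound degrades to $[a\cdot\Theta(1)]^{|\mu|-\ell(\mu)}$ with $a>1$ and is useless. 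So as written the decisive step is unsupported, and you genuinely need super-exponential decay here because, unlike in the paper, your decomposition does not carry the full factor $m^{-k}$: each error term only comes with $z_\lambda^{-1}m^{-\ell(\lambda)}$, and the deficit against the main term $f^\nu/(m^kk!)$ can be as large as roughly $(km)^{\,v}$, $v$ being the size of the non-unit part of $\lambda$.

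The good news is that your scheme is repairable with an estimate that is actually available and is the one the paper uses: Roichman's bound, which for balanced $\nu$ gives $|\chi^\nu(g)|/f^\nu\le q^{\,n-\fix(g)}$ with a fixed $0<q<1$. Keep the factors $1/(z_\lambda m^{\ell(\lambda)})$ instead of discarding them: one has $k!\,m^{k-\ell(\lambda)}/z_\lambda\le (km)^{v}$, every surviving $\rho$ has at least $mv$ non-fixed points (and $jm$ more for each unit part of $\lambda$ assigned $d_i\ge2$), and the number of surviving tuples is controlled by the Murnaghan--Nakayama constraint exactly as you argue; then the per-$\lambda$ relative error is at most on the order of $(km^2q^m)^{v}$, which is summable over $\lambda\ne(1^k)$ and tends to $0$. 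For comparison, the paper sidesteps the whole issue by first replacing $\Lie_m$ by $\tfrac1m h_1^m$ \emph{inside} the plethysm $h_k[\cdot]$ (controlling that substitution via Schur-positivity of plethysm and a support argument for the $c$-bad shapes), so that the factor $m^{-k}$ is pulled out globally and the residual character sum $\sum_{e\ne g\in\S_k}\chi^\nu(mg)/f^\nu$ only has to be $o(1)$ relative to $f^\nu/k!$, for which plain exponential decay suffices. Either fix works; but as submitted, the proof has a gap at the quoted character bound.
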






\begin{proof}
    The case $k=1$ is essentially Theorem~\ref{prop:Lie}(a).

    Assume that $k \ge 2$, and let $s > 0$.
    Fix $c$ as in the proof of Theorem~\ref{prop:Lie}(a). We will say that 
    $\la\vdash m$ is $c$-\textit{good} if $\max\{\la_1,\la_1'\}\le m-c$ and 
    $c$-\textit{bad} otherwise.
    Then, for sufficiently large $m$,
    \begin{equation}\label{aux}
        \Lie_m
        = \sum_{\la\vdash m}\frac{f^\la}m(1+r_\la) s_\la
    \end{equation}
    where for $c$-good $\la$, from~\eqref{swanson} we have $|r_\la|\le m^{-s}$.

By an easy generalization of~\cite[(8.8)]{Macdonald}, for any symmetric functions $f_i$ we have
\[
    h_k \left[ \sum f_i \right]
    = s_{(k)} \left[ \sum f_i \right]
    = \sum_{\substack{j_i \ge 0 \\ \sum j_i=k}}\prod_i h_{j_i}[f_i].
\]
For a collection of indices $\{j_\la\}_{\la\vdash m}$, denote 
\[
    r_{\{j_\la\}}
    := \prod_\la(1+r_\la)^{j_\la}-1.
\]
By \eqref{eq:hlcformula} and \eqref{aux},
\begin{eqnarray}
    L_{(m^k)}
    &=& h_k[\Lie_m] 
    = \sum_{\substack{j_\la \ge 0 \\ \sum j_\la=k}} \prod_{\la \vdash m}h_{j_\la} \left[ \frac{f^\la}m(1+r_\la) s_\la \right] 
    = \frac{1}{m^k} \sum_{\substack{j_\la \ge 0 \\ \sum j_\la=k}} \prod_\la(1+r_\la)^{j_\la} \prod_{\la\vdash m} h_{j_\la} \left[ f^\la s_\la \right] \nonumber \\
    &=& \frac{1}{m^k}\sum_{\substack{j_\la \ge 0 \\ \sum j_\la=k}} (1+r_{\{j_\la\}}) \prod_{\la \vdash m} h_{j_\la} \left[ f^\la s_\la \right]
    = \frac{1}{m^k}\sum_{\substack{j_\la \ge 0 \\ \sum j_\la=k}} \prod_{\la \vdash m} h_{j_\la} \left[f^\la s_\la \right] + R \nonumber \\
    &=& \frac{1}{m^k}h_k \left[ \sum_{\la \vdash m} f^\la s_\la \right] + R 
    = \frac{1}{m^k}h_k \left[h_1^m \right] + R, 
\label{eq:R}
\end{eqnarray}
where
\[
    R = \frac{1}{m^k}\sum_{\substack{j_\la \ge 0 \\ \sum j_\la=k}} r_{\{j_\la\}} \prod_{\la\vdash m} h_{j_\la}\left[f^\la s_\la\right].
\]
Now let $R=R_1+R_2$,
where $R_1$ includes all summands for which $j_\la \ne 0$ only for $c$-good $\la$, and $R_2$ includes all summands for which $j_\la\ne0$ for at least one $c$-bad $\la$.

In order to bound $R_1$, note that for $0 \le x \le 0.5$,
\[
    -2x \le \ln(1-x) \le 0 \le \ln(1+x) \le x ,
\]
and therefore, for $-0.5 \le x \le 0.5$,
\[
    |\ln(1+x)| \le 2|x|.
\]
Similarly, for $0 \le x \le \ln 2$,
\[
    -x \le e^{-x} - 1 \le 0 \le e^x - 1 \le 2x,
\]
and therefore, for $-\ln 2 \le x \le \ln 2$,
\[
    |e^x - 1| \le 2|x|.
\]
For each summand of $R_1$ we have $|r_\la| \le m^{-s} \le 0.5$ (for $m$ large enough), so that
\[
    \left| \ln \left( \prod_\la(1+r_\la)^{j_\la} \right) \right|
    \le \sum_\la j_\la |\ln(1+r_\la)|
    \le 2 \sum_\la j_\la |r_\la|
    \le 2k m^{-s}.
\]
Then, as $m \to \infty$, $2k m^{-s} \le \ln 2$ and therefore
\begin{align*}
    |r_{\{j_\la\}}|
    &= \left| \prod_\la(1+r_\la)^{j_\la} - 1 \right|
    = \left| \exp \left( \ln \left( \prod_\la(1+r_\la)^{j_\la} \right) \right) -1 \right|
    \le 2 \left| \ln \left( \prod_\la(1+r_\la)^{j_\la} \right) \right| \\
    &\le 4k m^{-s}
    = o(m^{-s+1}).
\end{align*}

Further, it is well known that the plethysm of two Schur functions is Schur-positive.
Therefore,
\begin{eqnarray}
    |\langle R_1,s_\nu \rangle|
    &\le& o(m^{-s+1})\left\langle \frac{1}{m^k} \sum_{\substack{\sum j_\la=k \\ j_\la=0\text{ for $c$-bad }\la}} \prod_{\la\vdash m} h_{j_\la} \left[f^\la s_\la \right], s_\nu \right\rangle \\
    &\le& o(m^{-s+1}) \left\langle \frac{1}{m^k} \sum_{\sum j_\la=k} \prod_{\la\vdash m} h_{j_\la} \left[f^\la s_\la \right], s_\nu \right\rangle \nonumber \\
    &=& o(m^{-s+1}) \left\langle \frac{1}{m^k}h_k \left[ \sum_{\la\vdash m} f^\la s_\la \right], s_\nu \right\rangle
    = o(m^{-s+1}) \left\langle \frac{1}{m^k} h_k \left[h_1^m \right], s_\nu \right\rangle.
\label{sigma1}
\end{eqnarray}

Now, each summand from $R_2$ contains a factor of the form $h_{j}[s_\la]$ with $c$-bad $\la$, for which either $\la_1\ge m-c$ or $\la'_1\ge m-c$. Then, by \cite[Theorem~5.1]{GutierrezRosas}, the decomposition of $h_j[s_\la]$ in the Schur basis contains only $s_\tau$ with $\tau\supset\la$, hence with either $\tau_1\ge m-c$ or $\tau'_1\ge m-c$.
But then the decomposition of the whole product $\prod_\la h_{j_\la}[s_\la]$ contains only $s_\tau$ with either $\tau_1\ge m-c$ or $\tau'_1\ge m-c$.
However, for balanced $\nu\vdash km$ we have $\nu_1,\nu'_1=O(\sqrt{km})=o(m)$, which means that for sufficiently large $m$ the corresponding inner product vanishes. Thus, $\langle R_2,s_\nu\rangle=0$ for sufficienly large~$m$. 

Combining this conclusion with~\eqref{eq:R} and~\eqref{sigma1}, we obtain
\begin{equation}\label{firststep}
    \langle L_{(m^k)},s_\nu\rangle
    = \left(1+o(m^{-s+1})\right)\frac1{m^k}\langle h_k[h_1^m],s_\nu\rangle.
\end{equation}    
In order to deal with $\langle h_k[h_1^m],s_\nu\rangle$, we
expand $h_k$ in the basis of power sums:        
\[
    h_k[h_1^m]
    = \sum_{\mu\vdash k} \frac{p_\mu[h_1^m]}{z_\mu}
    = \frac{h_1^n}{k!} 
    + \sum_{\mu \ne (1^k)} \frac{p_{m\mu}}{z_\mu},
\]
where $m \mu \vdash mk$ is obtained from $\mu \vdash k$ by repeating each row $m$ times. 
Thus,  recalling that $\langle h_1^n,s_\nu \rangle = f^\nu$
and $\langle p_\tau,s_\nu\rangle = \chi^\nu(\tau)$ is the value of the irreducible character $\chi^\nu$ at permutations with cycle type $\tau$, we obtain
\begin{equation}\label{eq:A}
    \langle h_k[h_1^m],s_\nu \rangle
    = \frac{f^\nu}{k!} 
    + \sum_{\mu \ne (1^k)} \frac{\langle p_{m\mu},s_\nu \rangle}{z_\mu}
    = \frac{f^\nu}{k!}
    + \sum_{\mu \ne (1^k)}
    \frac{\chi^\nu(m\mu)}{z_\mu}
    = \frac{f^\nu}{k!}(1+A),
\end{equation}
with
\[
    A
    = \sum_{\mu \ne (1^k)}
    \frac{k!}{z_\mu}
    \frac{\chi^\nu(m\mu)}{f^\nu}=\sum_{e\ne g\in\S_k}\frac{\chi^\nu(mg)}{f^\nu},
\]
where, for $g \in \S_k$, the cycle type of the permutation $mg\in\S_{mk}$ is obtained from the cycle type of $g$ by repeating each part $m$ times. Now we use Roichman's~\cite{R96} estimate: there exist constants $0 < a < 1$ and $b > 0$ such that for every $\nu\vdash n$ and every $g\in\S_n$,
\[
    \frac{|\chi^\nu(g)|}{f^\nu}
    \le \left[ \max \left( \frac{\nu_1}{n}, \frac{\nu'_1}{n}, a \right) \right]^{b(n-\fix(g))},
\]
where $\fix(g)$ is the number of fixed points of $g$. In our case, since $\nu$ is balanced, $\frac{\nu_1}{mk},\frac{\nu'_1}{mk} = o(1)$; besides, $\fix(mg)=m\,\fix(g)$. Thus, denoting $q:=a^b$ (obviously, $0<q<1$), for sufficiently large $m$, the left-hand side does not exceed
$q^{m(k-\fix(g))}$, so 
\[
    |A|
    \le \sum_{e \ne g \in \S_k} q^{m(k-\fix(g))}
    = \sum_{i=0}^{k-1} D_{k,i} q^{m(k-i)},
\]
where the so-called ``rencontres number'' $D_{k,i}$ is the number of permutations $g\in\S_k$ with $\fix(g)=i$.
Clearly, $D_{k,i} = \binom{k}{i} D_{k-i,0}$, where $D_{k-i,0} = d_{k-i}$ are the usual derangement numbers. Thus
\[
    |A| 
    \le \sum_{i=0}^{k-1} \binom{k}{i} d_{k-i} q^{m(k-i)}
    = \sum_{j=1}^{k} \binom{k}{j} d_{j} q^{mj}
    \le \sum_{j=1}^{k} \binom{k}{j} j! q^{mj}
    \le \sum_{j=1}^{k} k^j q^{mj}
    =\sum_{j=1}^{k} (kq^m)^j.
\]
Observe that $kq^m=o(mq^m)=o(m^{-t})$ for any $t>0$, so $A = o(m^{-t})$. Plugging this into~\eqref{eq:A} and~\eqref{firststep}, we obtain
$$
\langle L_{(m^k)},s_\nu\rangle=\left(1+o(m^{-s+1})\right)\left(1+o(m^{-t})\right)
\frac{f^\nu}{m^kk!},
$$
which completes the proof.
\end{proof}



\section{
Diagrams with distinct row lengths}\label{sec:distinct}

In this section we prove~\eqref{claim}, with a good error estimate, for 
diagrams with distinct row lengths. The main result of this section, Theorem~\ref{thm:distinct}, plays a crucial role in the proof of Theorem~\ref{thm:main}. 

In the following theorem we actually consider a sequence $\mu^{(1)}, \mu^{(2)}, \dots$ of diagrams, but we write only a single $\mu$ for readability.

\begin{theorem}\label{thm:distinct}
    Let $\mu =(m_1, \dots, m_t) \vdash n$ with $m_1 > \dots > m_t$. 
    If $t\le g(n)$ and $m_t\ge f(n)$ for some functions $f,g$ such that $f(n) \to \infty$, $g(n) = O(f(n) / \ln n)$, and $g(n) = o(\sqrt n)$ (as $n \to \infty$), then $\psi^\mu$ tends to be regular. Furthermore,
    \[
        \langle L_{\mu},s_\nu \rangle 
        \sim \frac{f^\nu}{z_{\mu}} 
        \quad \text{as} \quad n\to \infty,
    \]
    for balanced (and hence for Plancherel-typical) $\nu$.
    Moreover, the error term is uniform over $\nu$:
    \[
        \langle L_{\mu}, s_\nu \rangle 
        = \frac{f^\nu}{z_{\mu}} (1+R_\nu)
        \quad \text{where} \quad |R_\nu| = o(f(n)^{-q})
    \]
    for any fixed (arbitrarily large) $q$.
\end{theorem}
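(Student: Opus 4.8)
The plan is to establish directly the strongest assertion, namely the uniform estimate $|R_\nu| = o(f(n)^{-q})$ for each fixed $q$; the other two claims (that $\psi^\mu$ tends to be regular, and that $\langle L_\mu,s_\nu\rangle\sim f^\nu/z_\mu$ for balanced, hence for Plancherel-typical, $\nu$) then follow at once. The point of departure is that, because $\mu=(m_1,\dots,m_t)$ has \emph{distinct} parts, every part occurs with multiplicity one, so the wreath products in~\eqref{eq:hlcformula} degenerate and
\[
    L_\mu = \Lie_{m_1}\Lie_{m_2}\cdots\Lie_{m_t}
\]
as a product of symmetric functions (equivalently $\psi^\mu=\Ind_{\S_{m_1}\times\cdots\times\S_{m_t}}^{\S_n}(\psi^{(m_1)}\otimes\cdots\otimes\psi^{(m_t)})$), and moreover $z_\mu=\prod_{j=1}^t m_j$. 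Substituting into each factor the classical power-sum expansion of the Lie characteristic, $\Lie_m=\tfrac1m\sum_{d\mid m}\mu(d)\,p_d^{m/d}$ (see, e.g., \cite{Reutenauer}), yields
\[
    L_\mu = \frac{1}{z_\mu}\sum_{\mathbf d}\Bigl(\prod_{j=1}^t\mu(d_j)\Bigr)\,p_{\tau(\mathbf d)},
\]
where $\mathbf d=(d_1,\dots,d_t)$ runs over tuples with $d_j\mid m_j$ and $\tau(\mathbf d)\vdash n$ is the cycle type with $m_j/d_j$ parts equal to $d_j$ for each $j$ (multiplicities accumulating when some of the $d_j$ coincide). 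Pairing with $s_\nu$ and using $\langle p_\tau,s_\nu\rangle=\chi^\nu(\tau)$, the term $\mathbf d=(1,\dots,1)$ contributes $\langle p_1^n,s_\nu\rangle/z_\mu = f^\nu/z_\mu$, so
\[
    R_\nu = \sum_{\mathbf d\ne(1,\dots,1)}\Bigl(\prod_{j=1}^t\mu(d_j)\Bigr)\frac{\chi^\nu(\tau(\mathbf d))}{f^\nu}.
\]

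Next I would bound each character ratio using Roichman's estimate \cite{R96} (as quoted in the proof of Theorem~\ref{thm:rectangular}). For $\mathbf d\ne(1,\dots,1)$, put $S=\{j:d_j>1\}\ne\emptyset$; then $\fix(\tau(\mathbf d))=\sum_{j\notin S}m_j$, hence $n-\fix(\tau(\mathbf d))=\sum_{j\in S}m_j\ge m_t\ge f(n)$. Since $\nu$ is balanced, $\nu_1,\nu_1'\le D\sqrt n$, so $\nu_1/n,\nu_1'/n<a$ for all sufficiently large $n$; Roichman's bound then gives $|\chi^\nu(\tau(\mathbf d))|/f^\nu\le Q^{\sum_{j\in S}m_j}$ with $Q:=a^b\in(0,1)$, and this holds uniformly over all balanced $\nu\vdash n$.

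The last step is to sum the series. Using $|\mu(d)|\le1$ and factoring over $j$,
\[
    |R_\nu|\le\sum_{\mathbf d\ne(1,\dots,1)}Q^{\sum_{j:d_j>1}m_j}=\prod_{j=1}^t\bigl(1+c_j\,Q^{m_j}\bigr)-1,\qquad c_j:=\#\{d:1<d\mid m_j\}\le m_j,
\]
whence $|R_\nu|\le\exp\bigl(\sum_{j=1}^t m_jQ^{m_j}\bigr)-1$. Here distinctness of the parts does the essential work: the $m_j$ are distinct integers all $\ge f(n)$, so $\sum_{j=1}^t m_jQ^{m_j}\le\sum_{m\ge f(n)}mQ^m=O\bigl(f(n)Q^{f(n)}\bigr)$, the tail of a convergent series. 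Since $f(n)\to\infty$ this tends to $0$, so $|R_\nu|\le\exp\bigl(O(f(n)Q^{f(n)})\bigr)-1=O\bigl(f(n)Q^{f(n)}\bigr)$, uniformly over balanced $\nu\vdash n$; and $f(n)^{q+1}Q^{f(n)}\to0$ for every fixed $q$, so $|R_\nu|=o(f(n)^{-q})$, as claimed.

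I expect the delicate point to be this last step. A priori $R_\nu$ is a sum of $\prod_j\#\{d\mid m_j\}$ terms, a super-polynomial quantity, so one must rule out a conspiracy among them; the resolution is that distinctness forces the exponents $\sum_{j\in S}m_j$ to be spread out, collapsing the total into a geometric tail starting at $f(n)$. This is precisely where the distinct-parts hypothesis enters. It is worth noting that the argument above uses only $m_t\ge f(n)\to\infty$ together with distinctness; the bounds $t\le g(n)$, $g(n)=O(f(n)/\ln n)$, $g(n)=o(\sqrt n)$ are comfortably met here and are stated in the form that will be convenient when Theorem~\ref{thm:distinct} is fed into the Gluing Lemma in the proof of Theorem~\ref{thm:main}. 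A minor routine point, already handled above, is the uniform applicability of Roichman's bound, which rests on the observation that $\max(\nu_1/n,\nu_1'/n,a)=a$ for every balanced $\nu$ once $n$ is large.
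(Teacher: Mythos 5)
Your proposal is correct, and it takes a genuinely different route from the paper. The paper's proof expands $\langle L_\mu,s_\nu\rangle$ through multi-Littlewood--Richardson coefficients into products $\prod_i\langle \Lie_{m_i},s_{\nu_i}\rangle$ and imports the one-row error bounds of Theorem~\ref{prop:Lie}(a) (Kra\'skiewicz--Weyman plus Swanson) factor by factor; the price is that the inner shapes $\nu_i\subseteq\nu$ need not be balanced, which forces the three-way split $R=R_1+R_2+R_3$ and the Dousse--F\'eray skew-dimension estimates to control nearly one-row/one-column $\nu_i$ --- and it is exactly there that the hypotheses $t\le g(n)$, $g(n)=O(f(n)/\ln n)$, $g(n)=o(\sqrt n)$ are consumed. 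You instead stay entirely at the level of the outer shape: the M\"obius/power-sum formula $\Lie_m=\frac1m\sum_{d\mid m}\mu(d)p_d^{m/d}$ turns $R_\nu$ into a sum of normalized character values $\chi^\nu(\tau(\mathbf d))/f^\nu$ of the \emph{balanced} partition $\nu$ at permutations with at least $m_t\ge f(n)$ non-fixed points, and Roichman's bound (which the paper itself uses, but only in the rectangular case) makes every such term exponentially small; distinctness of the parts then collapses the whole sum into a geometric tail. What each approach buys: yours is shorter, needs only distinctness and $m_t\ge f(n)\to\infty$ (the conditions on $t$ are irrelevant), and yields the stronger uniform bound $|R_\nu|=O(f(n)Q^{f(n)})$, far better than the stated $o(f(n)^{-q})$; the paper's argument is more modular, in that it uses the one-row asymptotics purely as a black box --- this is precisely the structural point invoked in Section~\ref{sec:variations} to transfer all results to $\psi^\la_r$ --- whereas your route leans on the explicit power-sum expansion of $\Lie_m$ (though the analogous expansion of $\psi^{(m)}_r$ via Ramanujan sums, with coefficients bounded by $m$, would let your argument carry over there as well).
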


\begin{proof}

Since $\mu$ has distinct parts, we have 
\[
    L_\mu
    = \prod_{i=1}^t L_{(m_i)}
    = \prod_{i=1}^t \Lie_{m_i}.
\]
    Using the Hopf algebra structure on $\Lambda$, for any two symmetric functions $f$ and $g$, and Schur function $s_\nu$ we have
    \begin{eqnarray}\label{eq:Hopf}
        \langle fg, s_\nu \rangle
        &=& \langle f \otimes g, \Delta s_\nu \rangle
        = \langle f\otimes g, \sum_{\la \subseteq \nu} s_\la \otimes s_{\nu/\la} \rangle
        = \sum_{\la \subseteq \nu} \langle f, s_\la \rangle \langle g, s_{\nu/\la} \rangle \nonumber 
        \\
        &=& \sum_{\la \subseteq \nu} \langle f, s_\la \rangle \langle g,\sum_{\mu \subseteq \nu} c^\nu_{\la \mu} s_\mu \rangle 
        = \sum_{\la,\mu \subseteq \nu} c^\nu_{\la \mu} \langle f, s_\la \rangle \langle g, s_\mu\rangle.
    \end{eqnarray}
where $c^\nu_{\la \mu}$ are the Littlewood--Richardson coefficients.
By an obvious generalization of this identity
to an arbitrary number of factors,
\[
    \langle L_\mu,s_\nu \rangle
    = \langle \prod_{i=1}^t L_{(m_i)}, s_\nu \rangle
    = \sum_{\substack{\nu_1,\ldots,\nu_t\\ \nu_i \vdash m_i}} c^\nu_{\nu_1,\ldots,\nu_t} \prod_{i=1}^t \langle L_{(m_i)},s_{\nu_i} \rangle,
\]
where  
$c^\nu_{\nu_1,\ldots,\nu_t}$ is a ``multi-Littlewood--Richardson coefficient'' equal to the coefficient of $s_\nu$ in the product $s_{\nu_1} \cdots s_{\nu_t}$.

For a partition $\nu \vdash m$, define an error term $r_\nu$ by
\begin{equation}\label{eq:def_error}
    \langle L_{(m)},s_\nu \rangle
    = \frac{f^\nu}{m} \cdot (1 + r_\nu) .
\end{equation}
We can then write
\[
    \langle L_\mu,s_\nu \rangle
    = \sum_{\substack{\nu_1,\ldots,\nu_t \\ \nu_i \vdash m_i}} c^\nu_{\nu_1,\ldots,\nu_t} \prod_{i=1}^t \frac{f^{\nu_i}}{m_i} (1 + r_{\nu_i}) .
\]
Defining $r_{\nu_1, \dots, \nu_t}$ by
\[
    1 + r_{\nu_1, \dots, \nu_t} 
    := \prod_{i=1}^{t} (1 + r_{\nu_i}) 
\]
and recalling that $z_\mu  = m_1 \cdots m_t$,
this becomes
\[
    \langle L_\mu,s_\nu \rangle
    = \frac{1}{z_\mu} \sum_{\substack{\nu_1,\ldots,\nu_t \\ \nu_i \vdash m_i}} c^\nu_{\nu_1,\ldots,\nu_t} (1 + r_{\nu_1, \dots, \nu_t}) \prod_{i=1}^t f^{\nu_i} .
\]
Recalling (e.g., from~\cite[Corollary 7.12.5]{Stanley_ECII}) that
\[
    h_1^m = \sum_{\nu \vdash m} f^\nu s_\nu \,,
\]
we have
\begin{align*}
    \sum_{\substack{\nu_1,\ldots,\nu_t \\ \nu_i \vdash m_i}} c^\nu_{\nu_1,\ldots,\nu_t} \prod_{i=1}^t f^{\nu_i} 
    &= \sum_{\substack{\nu_1,\ldots,\nu_t \\ \nu_i \vdash m_i}} \langle s_{\nu_1} \cdots s_{\nu_t}, s_\nu \rangle \prod_{i=1}^t f^{\nu_i} 
    = \left\langle \prod_{i=1}^t \sum_{\nu_i \vdash m_i} f^{\nu_i} s_{\nu_i}, s_\nu \right\rangle 
    = \left\langle \prod_{i=1}^t h_1^{m_i}, s_\nu \right\rangle \\
    &= \langle h_1^n, s_\nu \rangle
    = f^\nu .
\end{align*}
Therefore
\[
    \langle L_\mu,s_\nu \rangle
    = \frac{f^\nu}{z_\mu} \cdot (1 + R) ,
\]
where
\begin{equation}\label{eq:total_error}
    R
    := \frac{1}{f^\nu} \sum_{\substack{\nu_1,\ldots,\nu_t \\ \nu_i \vdash m_i}} c^\nu_{\nu_1,\ldots,\nu_t} r_{\nu_1, \dots, \nu_t} \prod_{i=1}^t f^{\nu_i} .
\end{equation}
Our goal is to prove that, 
for $\mu \vdash n$ 
as in the statement of the theorem and $\nu \vdash n$ balanced, 
$R$ tends to zero (as $n$ tends to infinity)
uniformly over $\nu$.
By the definition of $\nu$ being balanced, there exist a constant $a>0$ such that
\begin{equation}\label{eq:balanced}
    \max\{\nu_1,\nu_1'\}\le a\sqrt n\,.   
\end{equation}



Fix an arbitrary real number $s > 1$. 

By Theorem~\ref{prop:Lie}(a), there exist constants $M > 0$ and $c > 0$ (both depending only on~$s$) such that, for every $m \ge M$ and $\nu \vdash m$ with $\max\{\nu_1, \nu'_1\} \le m-c$, the error term~$r_\nu$ in Equation~\eqref{eq:def_error} satisfies
\[
    |r_\nu| \le m^{-s}.
\]
Call a partition $\nu \vdash m$ {\em 
$c$-good} if $\max\{\nu_1,\nu'_1\} \le m-c$, and {\em $c$-bad} otherwise.
Write 
\[
    R = R_1 + R_2 + R_3,
\]
where
\begin{enumerate}

    \item 
    $R_1$ includes the summands in Equation~\eqref{eq:total_error} for which all $\nu_i$ are $c$-good; 

    \item 
    $R_2$ includes the summands for which at least one $\nu_i$ is $c$-bad and has size $|\nu_i| = m_i < d \sqrt n$, with $d$ to be determined later; and

    \item 
    $R_3$ includes all other summands,
    namely: those for which there exists at least one $c$-bad $\nu_i$, and all the $c$-bad $\nu_i$ have size at least $d \sqrt n$.
        
\end{enumerate}

Let us first deal with $R_1$:
\[
    R_1
    = \frac{1}{f^\nu} \sum_{\substack{\nu_1,\ldots,\nu_t \\ \forall i\, \nu_i \vdash m_i \text{ is $c$-good}}} c^\nu_{\nu_1,\ldots,\nu_t} r_{\nu_1, \dots, \nu_t} \prod_{i=1}^t f^{\nu_i} .
\]
As observed in the proof of Theorem~\ref{thm:rectangular}, 
\[
    |x| \le 0.5 
    \quad \Longrightarrow \quad 
    |\ln(1+x)| \le 2|x|
\]
and 
\[
    |x| \le \ln 2 
    \quad \Longrightarrow \quad
    |e^x - 1| \le 2|x|.
\]
As mentioned above, by Theorem~\ref{prop:Lie}(a), 
if $m \ge M$ and $\nu \vdash m$ is $c$-good, then $|r_\nu| \le m^{-s}$.
We can assume that $M \ge 2^{1/s}$. It follows that, if $m_t \ge M$ and $\nu_1, \dots, \nu_t$ are $c$-good, then (for each $1 \le i \le t$) $|r_{\nu_i}| \le m_i^{-s} \le M^{-s} \le 0.5$ and therefore
\[
    \left| \ln \left( \prod_{i = 1}^t (1 + r_{\nu_i}) \right) \right|
    \le \sum_{i = 1}^t |\ln(1+r_{\nu_i})|
    \le 2 \sum_{i = 1}^t |r_{\nu_i}|
    \le 2 \sum_{i = 1}^t m_i^{-s}
    \le 2t m_t^{-s} .
\]
Since $s>1$, $2t m_t^{-s} \le  2g(n)f^{-s}(n) \to 0$. Hence, for $n$ large enough, $2t m_t^{-s} \le \ln 2$ and 
\[
    |r_{\nu_1, \dots, \nu_t}|
    = \left| \exp \left( \ln \left( \prod_{i = 1}^t (1+r_{\nu_i}) \right) \right) - 1 \right|
    \le 2 \left| \ln \left( \prod_{i = 1}^t (1+r_{\nu_i}) \right) \right|
    \le 4t m_t^{-s} .
\]
Thus, as $n \rightarrow \infty$,
\begin{equation}\label{eq:R1}   
    |R_1| 
    = \left| \frac{1}{f^\nu} \sum_{\substack{\nu_1,\ldots,\nu_t \\ \forall i\, \nu_i \vdash m_i \text{ is $c$-good}}} c^\nu_{\nu_1,\ldots,\nu_t} r_{\nu_1, \dots, \nu_t} \prod_{i=1}^t f^{\nu_i} \right|
    \le 4t m_t^{-s} 
    \le 4g(n)f^{-s}(n)
    \rightarrow 0 \,.
\end{equation}


    
    

Let us now deal with $R_2$:
\[
    R_2 
    = \frac{1}{f^\nu} \sum_{\substack{\nu_1,\ldots,\nu_t \\ \forall i\, \nu_i \vdash m_i \\
    \exists i\, \nu_i \text{ is $c$-bad and } m_i < d \sqrt n}} c^\nu_{\nu_1,\ldots,\nu_t} r_{\nu_1, \dots, \nu_t} \prod_{i=1}^t f^{\nu_i} .
\]
Recall (see Theorem~\ref{t:sum_of_hLc})
that the sum of all higher Lie characters is the regular character.
Equation~\eqref{eq:def_error} thus implies that, for any $\nu_i \vdash m_i$, 
 \[
    \frac{f^{\nu_i}}{m_i} (1+r_{\nu_i}) 
    = \langle L_{(m_i)}, s_{\nu_i} \rangle 
    \le \sum_{\mu \vdash m_i} \langle L_\mu, s_{\nu_i} \rangle
    = \langle h_1^{m_i}, s_{\nu_i} \rangle
    = f^{\nu_i},
\]
so that
\[
    0 \le 1 + r_{\nu_i} \le m_i .
\]
Then
\[
    0
    \le 1+ r_{\nu_1, \ldots, \nu_t} 
    = \prod_{i=1}^{t} \left( 1+r_{\nu_i} \right)
    \le \prod_{i=1}^{t} m_i
    \le n^{t}.
\]
Therefore
\begin{align*}
    |R_2| 
    &\le 
    n^{t} \cdot
    \frac{1}{f^\nu} \sum_{\substack{\nu_1,\ldots,\nu_t \\ \forall i\, \nu_i \vdash m_i \\
    \exists i\, \nu_i \text{ is $c$-bad and } m_i < d \sqrt n}} c^\nu_{\nu_1,\ldots,\nu_t} \prod_{i=1}^t f^{\nu_i} \\
    &\le n^{t} \cdot
    \sum_{\substack{i_0 = 1 \\ m_{i_0} < d \sqrt n}}^{t} \,\,\sum_{\substack{\al \vdash m_{i_0} \\ \al \text{ is $c$-bad}}}
    \frac{1}{f^\nu} \sum_{\substack{\nu_1,\ldots,\nu_t \\ \forall i\, \nu_i \vdash m_i \\
    \nu_{i_0} = \al}} c^\nu_{\nu_1,\ldots,\nu_t} \prod_{i=1}^t f^{\nu_i} \\
    &= n^{t} \cdot
    \sum_{\substack{i_0 = 1 \\ m_{i_0} < d \sqrt n}}^{t} \,\,\sum_{\substack{\al \vdash m_{i_0} \\ \al \text{ is $c$-bad}}}
    \frac{f^{\nu/\al} f^\al}{f^\nu} .
\end{align*}
Recall that $\max\{\nu_1,\nu_1'\}\le a\sqrt n$ by~\eqref{eq:balanced}. Then,
by Corollary~\ref{lem:sub_generic},      
there exist constants $d > 0$ and $b > 0$ such that
if $\al \vdash m$ with $m < d \sqrt n$, then
\[
    \frac{f^{\nu/\al}}{f^\nu} 
    \le e^{b m^{2} n^{-1/2}} \cdot \frac{f^\al}{m!}
    < e^{b d m} \cdot \frac{f^\al}{m!} \, .
\]  
It is easy to see that, for any $c$-bad $\al \vdash m$, $f^\al = O(m^c)$. 
It follows that, for a suitable constant $b' > 0$,
\[
    \frac{f^{\nu/\al} f^\al}{f^\nu} 
    \le \frac{e^{b d m}}{m!} \cdot (f^\al)^2
    \le \frac{e^{b d m}\cdot b' m^{2c}}{m!}
    \le\frac{b'e^{(b d+1)m}m^{2c}} {m^m}.
\]
The function
\[
    \ln\left(\frac{e^{(b d+1)m}m^{2c}} {m^m}\right)
    =(bd+1)m+2c\ln m-m\ln m
\]
is monotone decreasing for $m$ large enough.
A $c$-bad $\al$ has at most $c$ cells outside its first row (or column), which means that the total number of $c$-bad $\al \vdash m$ is at most a constant $c'$  independent of $m$. Denote $C=c'b'$.
By our assumptions $t \le g(n)$, $m_t \ge f(n) \to \infty$, and $g(n) = O(f(n) / \ln n)$, we thus obtain
\begin{align}\label{eq:fromR2}
    |R_2| 
    &\le n^{t} \cdot
    \sum_{\substack{i_0 = 1 \\ m_{i_0} < d \sqrt n}}^{t} \,\,\sum_{\substack{\al \vdash m_{i_0} \\ \al \text{ is $c$-bad}}}
    \frac{f^{\nu/\al} f^\al}{f^\nu} \nonumber\\
    &\le n^t \cdot t \cdot C \cdot 
   e^{(b d + 1) f(n) + 2c\ln f(n) - f(n) \ln f(n) } \\
    &\le C \cdot e^{g(n)\ln n+\ln g(n)
    +(b d + 1) f(n) + 2c\ln f(n) - f(n) \ln f(n) } ,\nonumber
\end{align}
so that, as $n \to \infty$,
\begin{equation}\label{eq:R2}
    |R_2| \le e^{-f(n)\ln f(n)(1+o(1))} \to 0 \,.  
\end{equation}



\medskip

\medskip

It remains to deal with $R_3$:

\[
    R_3 
    = \frac{1}{f^\nu} \sum_{\substack{\nu_1,\ldots,\nu_t \\ \forall i\, \nu_i \vdash m_i \\
    \exists i\, \nu_i \text{ is $c$-bad} \\ \forall i\,\, \nu_i \text{ is $c$-bad } \Rightarrow\, m_i \ge d \sqrt n}} c^\nu_{\nu_1,\ldots,\nu_t} r_{\nu_1, \dots, \nu_t} \prod_{i=1}^t f^{\nu_i} .
\]
As in the discussion of $R_2$, we have
\begin{align}\label{eq:expansionR3}
    |R_3| 
    &\le n^t \cdot
    \sum_{\substack{i_0 = 1 \\ m_{i_0} \ge d \sqrt n}}^{t} \,\,\sum_{\substack{\al \vdash m_{i_0} \\ \al \text{ is $c$-bad}}}
    \frac{f^{\nu/\al} f^\al}{f^\nu} .
\end{align}
Here we cannot use Corollary~\ref{lem:sub_generic}, and need a more delicate argument to bound $f^{\nu/\al}/f^\nu$.

Consider a $c$-bad $\al\vdash m_{i_0}$. For simplicity, write $m := m_{i_0}$.
By the $R_3$ assumption, $m \ge d \sqrt n$. 
Assume that $\al_1 = \max \{\al_1, \al'_1\}$; otherwise, exchange rows and columns in the following discussion.
Thus $\al_1 > m - c$.
For sufficiently large $n$, we have $d \sqrt n - c\ge \delta\sqrt n$ for a suitable constant $\delta>0$.
Since $\al$ is contained in $\nu$, we have by~\eqref{eq:balanced}
\begin{equation}\label{eq:alpah_nu_bounds}
    \delta\sqrt n
    \le d \sqrt n - c
    \le m - c
    < \al_1
    \le \nu_1 
    < a \sqrt n. 
\end{equation}

Given a diagram $\nu$, let $\nu^-:=\nu/(\nu_1)$ denote the (ordinary) diagram obtained from $\nu$ by removing the first row.
We can build a standard Young tableau of shape $\nu/\al$ by first choosing the entries in its first row (of length $\nu_1 - \al_1$), and then placing the other entries into  
$\nu^-/\al^-$ (if possible). Hence
\begin{eqnarray}\label{eq:skew}
   \frac{f^{\nu/\al} f^\al}{f^\nu}
   \le \binom{n-m}{\nu_1-\al_1} \frac{f^{\nu^-/\al^-} f^\al}{f^\nu}
   = \binom{n-m}{\nu_1-\al_1} \cdot \frac{f^{\nu^-/\al^-}}{f^{\nu^-}} \cdot \frac{f^{\nu^-}}{f^\nu} \cdot f^\al. 
\end{eqnarray}

We will bound, separately, each of the four factors in the RHS of~\eqref{eq:skew}.
To that end, we will use the following asymptotic formula (see, e.g., 
\cite[(5.12)]{Spencer}): for $k=o(n)$,
\begin{equation}\label{eq:binomial}
    \binom{n}{k}
    \sim \left( \frac{ne}{k} \right)^k (2\pi k)^{-1/2} e^{-\frac{k^2}{2n}(1+o(1))}
    \sim \frac{n^k}{k!} e^{-\frac{k^2}{2n}(1+o(1))}.
\end{equation}

For sufficiently large $n$, we have $\nu_1-\al_1 \le (a-\delta) \sqrt n$. 
Using the first asymptotic equality in~\eqref{eq:binomial} for $k = \nu_1 - \al_1$,
we obtain, for a suitable constant $d_1 > 0$,
\begin{equation}\label{eq:factor1}
   \binom{n-m}{\nu_1-\al_1}
   \le \binom{n}{\nu_1-\al_1}
   \le d_1^{\sqrt n}n^{\nu_1-\al_1}(\nu_1-\al_1)^{-(\nu_1-\al_1)}\, 
\end{equation}
(with the convention that if $\nu_1-\al_1=0$, then $(\nu_1-\al_1)^{-(\nu_1-\al_1)}=1$).

Observe that $\al^-\vdash c'$ with $c' < c$, and $\nu^-$ satisfies
$\nu^-_1,(\nu^-)'_1\le a\sqrt n$.
Hence, by Lemma~\ref{lem:strong_Vershik_Kerov},
\begin{equation}\label{eq:factor2}
    \frac{f^{\nu^-/\al^-}}{f^{\nu^-}}
    \sim \frac{f^{\al^-}}{c'!}
    = O(1).
\end{equation}

By the hook length formula, denoting by $h_{ij}$ the hook length at a cell $(i,j)$ of $\nu$ and observing that $h_{ij} \le \nu_1 + \nu_1' \le 2a \sqrt{n}$, we have
\[
    \frac{f^{\nu^-}}{f^\nu}
    = \frac{(n-\nu_1)!}{\prod_{(i,j)\in\nu^-}h_{ij}}
    \frac{\prod_{(i,j)\in\nu}h_{ij}}{n!}
    =\frac{(n-\nu_1)!\prod_{j=1}^{\nu_1}h_{1j}}{n!}
    \le\frac{(n-\nu_1)!}{n!}(2a\sqrt{n})^{\nu_1}.
\]
By the assumptions on~$\nu$ and the second asymptotic equality in~\eqref{eq:binomial} for $k = \nu_1$, 
\[
    \frac{(n-\nu_1)!}{n!} 
    = \left( \nu_1! \binom{n}{\nu_1} \right)^{-1}
    \sim  n^{-\nu_1} e^{\frac{\nu_1^2}{2n}(1 + o(1))} = n^{-\nu_1} \cdot O(1).
\]
Therefore,
\begin{equation}\label{eq:factor3}
   \frac{f^{\nu^-}}{f^\nu}
   \le O(1) \cdot n^{-\nu_1} \cdot
   (2a\sqrt{n})^{\nu_1}
   \le  d_2^{\sqrt n} n^{-\nu_1/2}.
\end{equation}

Finally, for any $c$-bad $\al \vdash m$, 
\begin{equation}\label{eq:factor4}
    f^\al = O(m^c) = O(n^c).
\end{equation} 

Plugging inequalities~\eqref{eq:factor1}, \eqref{eq:factor2}, \eqref{eq:factor3}, and~\eqref{eq:factor4} into~\eqref{eq:skew}, we obtain
\[
    \frac{f^{\nu/\al} f^\al}{f^\nu}
    \le d_1^{\sqrt n}n^{\nu_1-\al_1}(\nu_1-\al_1)^{-(\nu_1-\al_1)}
    \cdot O(1) 
    \cdot d_2^{\sqrt n} n^{-\nu_1/2}
    \cdot O(n^c)
    \le d_3^{\sqrt n} n^{(\nu_1/2-\al_1)} (\nu_1-\al_1)^{-(\nu_1-\al_1)}\, .
\]
Consider two cases. 
If $2\nu_1/3\le\al_1 \le \nu_1$, then 
$(\nu_1-\al_1)^{-(\nu_1-\al_1)} \le 1$ (by convention, also if $\nu_1 - \al_1 = 0$).
Also $\nu_1/2-\al_1 \le -\nu_1/6 \le - \delta \sqrt n/6$, so that
\[
    \frac{f^{\nu/\al} f^\al}{f^\nu}
    \le d_3^{\sqrt n} n^{-\delta \sqrt n/6}.
\]
On the other hand, if $\al_1<2\nu_1/3$, then $\nu_1-\al_1 > \nu_1/3 \ge \delta \sqrt n/3$, so that
\[
    (\nu_1-\al_1)^{-(\nu_1-\al_1)}
    \le (\delta \sqrt n/3)^{-(\nu_1-\al_1)}
\]
and
\[
    \frac{f^{\nu/\al} f^\al}{f^\nu}
    \le d_3^{\sqrt n}n^{(\nu_1/2-\al_1)} (\delta \sqrt n/3)^{-(\nu_1-\al_1)}
    \le d_4^{\sqrt n} n^{-\al_1/2}
    \le d_4^{\sqrt n} n^{-\delta \sqrt n/2} .
\]
Thus, in both cases, 
\[
    \frac{f^{\nu/\al} f^\al}{f^\nu}
    \le d_5^{\sqrt n}n^{-d_6\sqrt n}
\]
for positive constants $d_5,d_6$.
Using the same reasoning as in~\eqref{eq:fromR2} and the assumption $t \le g(n) = o(\sqrt n)$, formula~\eqref{eq:expansionR3} yields
\[
    |R_3|
    \le n^t\cdot t\cdot C\cdot d_5^{\sqrt n}n^{-d_6\sqrt n}
    \le C\cdot e^{g(n)\ln n + \ln g(n) + d_7\sqrt n-d_6\sqrt n\ln n}
\]
so that, as $n\to\infty$,
\begin{equation}\label{eq:R3}
   |R_3| 
   \le e^{-d_6\sqrt n\ln n(1+o(1))}
   \to 0 \,.
\end{equation}

It can be easily seen from the bounds \eqref{eq:R1},  \eqref{eq:R2}, and~\eqref{eq:R3}
that $R_2,R_3=o(R_1)$. Thus, the total error term
$R=R_1+R_2+R_3=O(R_1)=O(g(n)f^{-s}(n))=o(f^{-(s-1)}(n))$.
\end{proof}


\section{A gluing lemma}\label{sec:gluing}

The following ``Gluing Lemma'' is essential to the proof of Theorem~\ref{thm:main1}.

\begin{lemma}[Gluing Lemma]\label{l:gluing}
    Let $\la\vdash n$ be the disjoint union $\la=\mu\cup\tau$ of diagrams $\mu\vdash n-k$ and $\tau\vdash k$. Assume that 
    $k=o(n^{1/4})$,
    the smallest part of $\mu$ is larger than the largest part of $\tau$, 
    and the character $\psi^\mu$ tends to be regular strongly and uniformly,  
    in the sense that for any balanced $\al\vdash n-k$
\[
    \langle L_\mu,s_\al\rangle=\frac{f^\al}{z_\mu}(1+R_\al) 
    \quad \text{with} \quad |R_\al| < \omega(n-k) 
    \quad \text{as} \quad n \to \infty,
\]
    for some function $\omega(n)=o(1)$.
    Then $\psi^\la$ tends to be regular. Furthermore,
    \[
        \langle L_\la,s_\nu \rangle
        \sim \frac{f^\nu}{z_\mu z_\tau}
        \quad \text{as} \quad n \to \infty,
    \] 
    for balanced (and hence for Plancherel-typical) $\nu\vdash n$. More precisely, 
    \[
        \langle L_\la,s_\nu\rangle=\frac{f^\nu}{z_\mu z_\tau} (1+R_{\la,\nu})
        \quad \text{with} \quad
        |R_{\la,\nu}| \le O(\om(n-k) + k^2n^{-1/2}).
    \]
\end{lemma}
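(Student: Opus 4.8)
The plan is to mirror the structure of the proof of Theorem~\ref{thm:distinct}, using the Hopf-algebra coproduct to split $L_\la = L_\mu \cdot L_\tau$ against $s_\nu$. Since $\mu$ and $\tau$ have disjoint parts and the smallest part of $\mu$ exceeds the largest part of $\tau$, we have $L_\la = L_\mu L_\tau$ and $z_\la = z_\mu z_\tau$. Applying~\eqref{eq:Hopf} gives
\[
    \langle L_\la, s_\nu \rangle
    = \sum_{\al \subseteq \nu} \langle L_\mu, s_\al \rangle \, \langle L_\tau, s_{\nu/\al} \rangle
    = \sum_{\substack{\al,\beta \subseteq \nu \\ |\al| = n-k,\ |\beta| = k}} c^\nu_{\al\beta}\, \langle L_\mu, s_\al \rangle \, \langle L_\tau, s_\beta \rangle.
\]
The idea is that the sum is dominated by the \emph{balanced} diagrams $\al \vdash n-k$ (those with $\al_1, \al'_1 = O(\sqrt n)$), since for those the hypothesis gives $\langle L_\mu, s_\al \rangle = \frac{f^\al}{z_\mu}(1 + R_\al)$ with $|R_\al| \le \omega(n-k)$; for these terms one also needs to control $\langle L_\tau, s_\beta \rangle$ crudely, using that $L_\tau$ has total dimension $k!/z_\tau$ (so $\langle L_\tau, s_\beta \rangle \le f^\beta$ by Theorem~\ref{t:sum_of_hLc} applied to parts of $\tau$, or more simply $0 \le \langle L_\tau, s_\beta\rangle \le f^\beta$). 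The ``main'' term should then reassemble, via the same computation as in Theorem~\ref{thm:distinct}, to
\[
    \frac{1}{z_\mu}\sum_{\substack{\al \subseteq \nu,\ |\al| = n-k}} f^\al \langle L_\tau, s_{\nu/\al}\rangle
    = \frac{1}{z_\mu} \left\langle h_1^{n-k} \cdot L_\tau, s_\nu \right\rangle \Big/ (\text{correction})
    = \frac{f^\nu}{z_\mu z_\tau},
\]
where the last equality uses $\sum_{\al} f^\al s_\al = h_1^{n-k}$, the Hopf pairing in reverse, and $\sum_{\beta \vdash k} \langle L_\tau, s_\beta\rangle / (\text{stuff}) = \dim L_\tau \cdot (\ldots)$; more carefully, one writes $\langle h_1^{n-k} L_\tau, s_\nu\rangle = \sum_\al f^\al \langle L_\tau, s_{\nu/\al}\rangle$ and separately $\langle h_1^{n-k} L_\tau, s_\nu \rangle$ needs to be compared to $\frac{1}{z_\tau}\langle h_1^{n-k}h_1^k, s_\nu\rangle = \frac{f^\nu}{z_\tau}$ — this comparison is exactly the ``$L_\tau$ small, hence close to regular against balanced test diagrams'' statement, which follows because all diagrams $\al \vdash n-k$ appearing with non-negligible $f^\al$ are balanced and for a \emph{fixed} small shape $\tau$ the skew dimensions $f^{\nu/\al}$ behave as in Lemma~\ref{lem:strong_Vershik_Kerov}. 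Concretely, I would isolate the main term as $\frac{1}{z_\mu}\langle h_1^{n-k} L_\tau, s_\nu\rangle$, then invoke Lemma~\ref{lem:strong_Vershik_Kerov} (valid since $k = o(n^{1/4})$) to write, for balanced $\al$, $f^{\nu/\al} = \frac{f^\al}{(n-k)!} f^\nu (1 + O(k^2 n^{-1/2}))$ — wait, rather: for the part involving $L_\tau$ and a fixed $\beta \vdash k$, $\langle h_1^{n-k} L_\tau, s_\nu\rangle = \sum_{\beta \vdash k}\langle L_\tau, s_\beta\rangle f^{\nu/\beta}$, and $f^{\nu/\beta} = \frac{f^\beta f^\nu}{k!}(1 + O(k^2 n^{-1/2}))$ by Lemma~\ref{lem:strong_Vershik_Kerov}; summing, $\sum_\beta \langle L_\tau, s_\beta\rangle f^\beta = \langle L_\tau, h_1^k \rangle = \dim L_\tau$... hmm, $\langle L_\tau, h_1^k\rangle$ is the coefficient — actually $\langle L_\tau, h_1^k\rangle = \langle L_\tau, \sum_\beta f^\beta s_\beta\rangle = \sum_\beta f^\beta \langle L_\tau, s_\beta\rangle$, and since $L_\tau$ is the Frobenius image of $\psi^\tau$, $\langle \psi^\tau, \Reg_k\rangle = \dim\psi^\tau = k!/z_\tau$; so the main term is $\frac{1}{z_\mu}\cdot\frac{f^\nu}{k!}\cdot\frac{k!}{z_\tau}(1 + O(k^2 n^{-1/2})) = \frac{f^\nu}{z_\mu z_\tau}(1 + O(k^2 n^{-1/2}))$. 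Good.

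For the error analysis I would partition the sum over $\al \vdash n-k$ into \textbf{(i)} balanced $\al$ and \textbf{(ii)} unbalanced $\al$. For part (i), the relative error from replacing $\langle L_\mu, s_\al\rangle$ by $\frac{f^\al}{z_\mu}$ is at most $\omega(n-k)$ uniformly, and the relative error from the skew-dimension estimate is $O(k^2 n^{-1/2})$ by Lemma~\ref{lem:strong_Vershik_Kerov}; these combine to the claimed bound $O(\omega(n-k) + k^2 n^{-1/2})$. For part (ii), I would argue that the contribution of unbalanced $\al$ is negligible: since $\al \subseteq \nu$ and $\nu$ is balanced, $\al_1 \le \nu_1 = O(\sqrt n)$ and $\al'_1 \le \nu'_1 = O(\sqrt n)$ automatically — so in fact \emph{every} $\al \subseteq \nu$ is already balanced! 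This is the key simplification: the test diagram $\nu$ being balanced forces all sub-diagrams $\al$ to be balanced, so part (ii) is empty. This is considerably cleaner than the three-way split in Theorem~\ref{thm:distinct}, because there the factor shapes $\nu_i$ are \emph{not} constrained to lie inside $\nu$.

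So the actual obstacle is more bookkeeping than conceptual: I need to (a) justify $0 \le \langle L_\tau, s_\beta\rangle \le f^\beta$ and $0 \le \langle L_\mu, s_\al\rangle \le f^\al$ uniformly (both follow from Theorem~\ref{t:sum_of_hLc} and Schur-positivity of $L_\la$, since $\psi^\la$ is an honest character), so that the error terms can be bounded termwise; (b) carefully track that the hypothesis ``$|R_\al| < \omega(n-k)$ as $n \to \infty$'' applies to all $\al \vdash n-k$ that are balanced, and all $\al \subseteq \nu$ qualify; and (c) confirm that $k = o(n^{1/4})$ is exactly the hypothesis needed to apply Lemma~\ref{lem:strong_Vershik_Kerov} with $m = k$. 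The hardest part will be making the reassembly of the main term rigorous — i.e., writing
\[
    \langle L_\la, s_\nu\rangle = \frac{1}{z_\mu}\langle h_1^{n-k} L_\tau, s_\nu\rangle + (\text{error from } R_\al),
\]
then expanding $\langle h_1^{n-k} L_\tau, s_\nu\rangle = \sum_{\beta \vdash k}\langle L_\tau, s_\beta\rangle f^{\nu/\beta}$ and applying Lemma~\ref{lem:strong_Vershik_Kerov} to each $f^{\nu/\beta}$ with its uniform $O(k^2 n^{-1/2})$ error, and finally recognizing $\sum_\beta f^\beta \langle L_\tau, s_\beta\rangle = \langle L_\tau, h_1^k\rangle = \dim\psi^\tau = k!/z_\tau$. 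Once these pieces are in place, the bound $|R_{\la,\nu}| \le O(\omega(n-k) + k^2 n^{-1/2})$ and the consequent ``tends to be regular'' (for balanced, hence Plancherel-typical, $\nu$) follow immediately, since $\omega(n-k) = o(1)$ and $k^2 n^{-1/2} = o(1)$ when $k = o(n^{1/4})$.
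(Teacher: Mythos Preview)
Your proposal is correct and follows essentially the same route as the paper's proof: expand $\langle L_\mu L_\tau, s_\nu\rangle$ via the Hopf coproduct, observe that every $\al\subseteq\nu$ with $\al\vdash n-k$ is automatically balanced (so the hypothesis on $L_\mu$ applies to all terms and your ``part (ii)'' is empty), factor out the resulting error $(1+R_1)$ with $|R_1|\le\omega(n-k)$, rewrite the remaining sum as $\sum_{\beta\vdash k}\langle L_\tau,s_\beta\rangle f^{\nu/\beta}$, apply Lemma~\ref{lem:strong_Vershik_Kerov} (valid since $k=o(n^{1/4})$) to extract a second error $(1+R_2)$ with $|R_2|=O(k^2 n^{-1/2})$, and finish with $\sum_\beta f^\beta\langle L_\tau,s_\beta\rangle=\langle L_\tau,h_1^k\rangle=\dim\psi^\tau=k!/z_\tau$. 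The paper does exactly this, with the same two error terms combined into $|R_{\la,\nu}|\le O(\omega(n-k)+k^2 n^{-1/2})$.
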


\begin{proof}
Since $\mu$ and $\tau$ have no common part, 
by~\eqref{eq:Hopf} we have
\[
    \langle L_{\mu\cup\tau},s_\nu \rangle
    = \langle L_\mu L_\tau,s_\nu \rangle
    = \sum_{\al \,\vdash n-k,\,\beta \,\vdash k} c^\nu_{\al\beta} \langle L_\mu,s_\al \rangle \langle L_\tau,s_{\beta} \rangle.
\]
The Littlewood--Richardson coefficient $c^\nu_{\al\beta}$ is nonzero only for $\al \subseteq \nu$. Since $\nu$ is balanced, the rows and columns of $\nu$ are $O(\sqrt{n})=O(\sqrt{n-k})$, hence the same is true for $\al \subseteq \nu$, i.e., $\al\vdash n-k$ is balanced.  
By the assumptions of the theorem,
\[
    \langle L_\mu,s_\al \rangle
    = \frac{f^\al}{z_\mu} (1+R_\al)
\]
with $|R_\al| \le \om(n-k) = o(1)$. 
Therefore
\begin{eqnarray*}
    \langle L_{\mu\cup\tau},s_\nu \rangle
    &=& z_\mu^{-1} \sum_{\al \,\vdash n-k,\,\beta \,\vdash k} c^\nu_{\al\beta} f^\al (1+R_\al) \langle L_\tau,s_{\beta} \rangle \nonumber \\
    &=& (1 + R_1) \cdot z_\mu^{-1} \sum_{\al \,\vdash n-k,\,\beta \,\vdash k} c^\nu_{\al\beta} f^\al \langle L_\tau,s_{\beta} \rangle,
\end{eqnarray*} 
where $|R_1| \le \om(n-k)$. 
Recalling that $f^\al=\langle h_1^{n-k},s_\al\rangle$, it follows from 
\eqref{eq:Hopf} that
\begin{eqnarray*}
    \langle L_{\mu\cup\tau},s_\nu \rangle
    &=& (1 + R_1) \cdot z_\mu^{-1} \sum_{\al \,\vdash n-k,\,\beta \,\vdash k} c^\nu_{\al\beta}f^\al\langle L_\tau,s_{\beta} \rangle \nonumber \\
    &=& (1 + R_1) \cdot z_\mu^{-1} \sum_{\al \,\vdash n-k,\,\beta \,\vdash k} c^\nu_{\al\beta}\langle h_1^{n-k},s_\al \rangle \langle L_\tau,s_{\beta} \rangle \nonumber \\
    &=& (1 + R_1) \cdot z_\mu^{-1} \sum_{\beta\,\vdash k} \langle L_\tau,s_\beta \rangle \langle h_1^{n-k},s_{\nu/\beta} \rangle \\
    &=& (1 + R_1) \cdot z_\mu^{-1} \sum_{\beta\,\vdash k} \langle L_\tau,s_\beta \rangle f^{\nu/\beta}.
\end{eqnarray*}
Since $k = o(n^{1/4})$, Lemma~\ref{lem:strong_Vershik_Kerov} yields, for balanced $\nu \vdash n$,
\begin{equation*}
    f^{\nu/\beta} 
    = \frac{f^\nu f^\beta}{k!} (1 + R_{\nu,\beta})    
\end{equation*}
where $|R_{\nu,\beta}| \le \om_2(n,k) = O(k^2n^{-1/2})$.
Therefore
\begin{eqnarray*}
    \langle L_{\mu\cup\tau},s_\nu \rangle
    &=& (1 + R_1) \cdot z_\mu^{-1} \sum_{\beta\,\vdash k} \langle L_\tau,s_\beta \rangle \frac{f^\nu f^\beta}{k!} (1 + R_{\nu,\beta}) \\
    &=& (1 + R_1) (1 + R_2) \cdot z_\mu^{-1} \sum_{\beta\,\vdash k} \langle L_\tau,s_\beta \rangle \frac{f^\nu f^\beta}{k!} ,
\end{eqnarray*}
where $|R_2| \le \om_2(n,k) = O(k^2n^{-1/2})$.
Now 
\begin{equation*}
    \frac{f^\nu}{k!} \sum_{\beta\,\vdash k} \langle L_\tau,s_\beta \rangle f^\beta
    = \frac{f^\nu}{k!} \langle L_{\tau},h_1^k \rangle
    = \frac{f^\nu}{z_\tau},
\end{equation*}
since 
\[
    \langle L_{\tau},h_1^k\rangle=\dim\psi^\tau=|C_\tau|=\frac{k!}{z_\tau}.
\]
We conclude that
\[
    \langle L_{\mu\cup\tau},s_\nu \rangle
    = (1 + R_1) (1+ R_2) \frac{f^\nu}{z_\mu z_\tau} 
    = (1 + R_3) \frac{f^\nu}{z_\mu z_\tau} ,
\]
where 
\[
    |R_3| \le O(\om(n-k) + k^2 n^{-1/2}) = o(1),
\]
as required.
\end{proof}

\begin{corollary}\label{cor:hooks}
Any sequence of higher Lie characters indexed by 
hook shapes $(n-k,1^k)$ with $k=o(n^{1/4})$ tends to be regular.    
\end{corollary}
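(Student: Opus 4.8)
The plan is to realize the hook as a disjoint union of a row and a column and feed it into the Gluing Lemma (Lemma~\ref{l:gluing}). Write $\la = (n-k,1^k) = \mu \cup \tau$, where $\mu = (n-k)$ is the one-row diagram of size $n-k$ and $\tau = (1^k)$ is the one-column diagram of size $k$; their disjoint union of parts is precisely the hook. For $n$ large the smallest part of $\mu$, namely $n-k$, exceeds the largest part of $\tau$, namely $1$, and by hypothesis $k = o(n^{1/4})$, so all the structural requirements of Lemma~\ref{l:gluing} are met. It remains only to check that the one-row character $\psi^{(n-k)}$ tends to be regular strongly and uniformly in the sense the lemma demands.

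First I would supply this input from Theorem~\ref{prop:Lie}. Since $k = o(n^{1/4}) = o(n)$ we have $m := n-k \to \infty$. Applying Theorem~\ref{prop:Lie}(a) with, say, $s = 1$, there are constants $N$ and $c$ (depending only on $s$) such that for every $m \ge N$ and every $\al \vdash m$ with $\al_1,\al'_1 \le m-c$ one has $\langle \Lie_m, s_\al \rangle = \frac{f^\al}{m}(1+r_\al)$ with $|r_\al| \le m^{-1}$. Any balanced $\al \vdash n-k$ satisfies $\al_1,\al'_1 = O(\sqrt{n-k})$, hence $\al_1,\al'_1 \le (n-k)-c$ once $n$ is large; so the hypothesis of Lemma~\ref{l:gluing} holds with $\omega(m) = 2m^{-1} = o(1)$. (The sharper estimate of Theorem~\ref{prop:Lie}(b), valid for balanced $\al$, would serve just as well.)

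The Gluing Lemma then applies at once and yields that $\psi^\la$ tends to be regular, with $\langle L_\la, s_\nu \rangle = \frac{f^\nu}{z_\mu z_\tau}(1+R_{\la,\nu})$ and $|R_{\la,\nu}| = O(\omega(n-k) + k^2 n^{-1/2}) = o(1)$ for balanced (in particular Plancherel-typical) $\nu \vdash n$; and since $z_{(n-k)} = n-k$ and $z_{(1^k)} = k!$, the normalization $z_\mu z_\tau = (n-k)\,k! = z_\la$ is exactly the expected one. (The degenerate case $k=0$ is simply Theorem~\ref{prop:Lie}.) I do not anticipate any genuine obstacle here: both ingredients — the refined one-row estimate and the Gluing Lemma — are already in place, and the only point deserving a word of care is the routine verification that a balanced diagram of size $n-k$ is ``$c$-good'' for $n$ large, i.e.\ that $O(\sqrt{n-k})$ is eventually below $(n-k)-c$, which is immediate.
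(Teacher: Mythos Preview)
Your proof is correct and follows exactly the paper's approach: the paper's proof is the single line ``Immediately follows from Theorem~\ref{prop:Lie} and Lemma~\ref{l:gluing}'', and you have simply spelled out the verification of the Gluing Lemma's hypotheses using the one-row estimate.
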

\begin{proof}
    Immediately follows from Theorem~\ref{prop:Lie} and Lemma~\ref{l:gluing}.
\end{proof}

\section{The asymptotics of a random higher Lie character}\label{sec:asymp}

In this section we consider the asymptotic behavior of the higher Lie character $\psi^\la$ for a {\em random} partition $\la \vdash n$. 
As explained in Subsection~\ref{sec:main_results},
it is natural to use the probability distribution
\[
    \Prob(\la) 
    = \frac{\dim \psi^\la}{\dim \Reg_n}
    = \frac{|C_\la|}{n!}
    \qquad (\forall\,\la \vdash n),
\]
where $C_\la$ is the conjugacy class consisting of all permutations in $\S_n$ with cycle type $\la$.
Thus, $\la=\la(\si)$ can be described as the cycle structure of a permutation $\si$ chosen uniformly at random from $\S_n$. 
This equivalent description will be used in the statements and proofs in this section. 
On the other hand, the irreducible characters $\chi^\nu$ (or, equivalently, the Schur function $s_\nu$) will be taken, as usual, according to the Plancherel distribution.


\begin{definition}\label{def:tends_in_probability}
    A random sequence of characters $\rho = (\rho_n)$, with $\rho_n$ a character of $\S_n$, {\em tends in probability to be regular} if the following holds: 
    if
    \[
        \langle \rho_n,\chi^{\nu_n} \rangle 
        = \frac{\dim \rho_n}{n!} \cdot f^{\nu_n} \cdot (1 + R_n),
    \]
    then, for every $\ve > 0$,
    \[
        \lim_{n \to \infty} \Prob\,(|R_n| > \ve) = 0
    \]
    for Plancherel-typical $\nu=(\nu_1,\nu_2,\ldots)$. 
\end{definition}

The main result of this section is the following.



\begin{theorem}\label{thm:main1}
    If $\si_n$ is chosen uniformly at random in~$\S_n$ then $\psi^{\la(\si_n)}$ tends in probability to be regular. 
    Furthermore, 
    if
    \[
        \langle L_{\la(\si_n)},s_{\nu_n} \rangle 
        = \frac{f^{\nu_n}}{z_{\la(\si_n)}} (1+R_n) 
    \]
    where $\si_n$ is chosen uniformly at random in~$\S_n$, then, for every $\ve >0$,
    \[
        \lim_{n \to \infty} \Prob\,(|R_n| > \ve) = 0
    \]
    for balanced (and hence for Plancherel-typical) $\nu=(\nu_1,\nu_2,\ldots)$. 
\end{theorem}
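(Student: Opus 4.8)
The plan is to decompose a uniformly random permutation $\si_n\in\S_n$ into a ``bulk'' part, consisting of its few long cycles, and a ``small'' part, consisting of all short cycles, and then apply the two main structural tools already proved: Theorem~\ref{thm:distinct} to the bulk and the Gluing Lemma (Lemma~\ref{l:gluing}) to attach the small part. Concretely, fix a slowly growing threshold, say $\ell = \ell(n)$ with $\ell \to \infty$ but $\ell = o(n^{1/4})$ (for instance $\ell = \lfloor \ln\ln n\rfloor$), and write the cycle type $\la(\si_n) = \mu \cup \tau$, where $\mu$ collects the cycles of length $> \ell$ and $\tau$ collects the cycles of length $\le \ell$. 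Here $\tau \vdash k$ for a random $k$, and by standard facts about the cycle structure of a uniform random permutation (the number of $i$-cycles is asymptotically Poisson$(1/i)$, independently), we have $k = \sum_{i\le\ell} i\,m_i$ with $\mathbb{E}k = \Theta(\ell)$ and strong concentration; in particular $\Prob(k > n^{1/4}) \to 0$, so that with probability $\to 1$ we are in the regime $k = o(n^{1/4})$ required by Lemma~\ref{l:gluing}, and the smallest part of $\mu$ (which is $> \ell$) exceeds the largest part of $\tau$ (which is $\le \ell$).

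The second step is to verify that, with probability tending to $1$, the bulk $\mu = (m_1 > \dots > m_t) \vdash n-k$ satisfies the hypotheses of Theorem~\ref{thm:distinct} with suitable functions $f, g$, and moreover does so with an error bound $\omega(n-k) = o(1)$ uniform over balanced test diagrams, as demanded by the ``strongly and uniformly'' hypothesis of the Gluing Lemma. For this I need two probabilistic facts about long cycles of $\si_n$. First, the \emph{number} $t$ of distinct long cycle lengths is $O(\ln n)$ with high probability (the total number of cycles of a random permutation is $\ln n + O(\sqrt{\ln n\log\log n})$ a.s., and distinct lengths are even fewer), so one may take $g(n)$ of order $\ln n$ — but note Theorem~\ref{thm:distinct} also requires $g(n) = o(\sqrt n)$ and $g(n) = O(f(n)/\ln n)$, forcing $f(n)$ to grow at least like $(\ln n)^2$; thus second, I must show the \emph{smallest long part} $m_t > \ell$ is in fact $\ge f(n)$ for some $f(n)\to\infty$ with high probability — but $m_t$ is just slightly above $\ell$, so the honest choice is to pick $\ell = \ell(n)$ growing fast enough that $\ell(n) \ge f(n) = \Theta((\ln n)^2)$, say $\ell(n) = (\ln n)^3$, while still $\ell(n) = o(n^{1/4})$. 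With $g(n) = c\ln n$, $f(n) = (\ln n)^3$, we have $g = o(\sqrt n)$, $g = O(f/\ln n)$, and $f \to\infty$, so Theorem~\ref{thm:distinct} applies and gives $\langle L_\mu, s_\al\rangle = \tfrac{f^\al}{z_\mu}(1+R_\al)$ with $|R_\al| = o(f(n-k)^{-q})$ uniformly over balanced $\al \vdash n-k$. This uniform error is exactly the input $\omega(n-k) = o(1)$ the Gluing Lemma needs.

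The third step is bookkeeping: on the high-probability event just described, Lemma~\ref{l:gluing} applied to $\la(\si_n) = \mu \cup \tau$ (with $\mu$ playing the role of the large, strongly-and-uniformly-regular piece and $\tau$ the small glued piece of size $k = o(n^{1/4})$) yields
\[
    \langle L_{\la(\si_n)}, s_{\nu_n}\rangle = \frac{f^{\nu_n}}{z_\mu z_\tau}(1 + R_{\la,\nu}) = \frac{f^{\nu_n}}{z_{\la(\si_n)}}(1 + R_n),
\]
using $z_{\mu\cup\tau} = z_\mu z_\tau$ since $\mu, \tau$ share no part, and $|R_n| \le O(\omega(n-k) + k^2 n^{-1/2})$. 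On this event $\omega(n-k) = o(1)$ and $k^2 n^{-1/2} \le n^{1/2} n^{-1/2} = o(1)$, so $|R_n| = o(1)$ deterministically on the event. Since the event has probability $\to 1$, for every $\ve > 0$ we get $\Prob(|R_n| > \ve) \to 0$, which is precisely the assertion; the balancedness of $\nu_n$ is all that is used, so the conclusion holds for balanced and hence Plancherel-typical test sequences.

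I expect the main obstacle to be the uniformity requirement in step two: Theorem~\ref{thm:distinct} does supply a uniform-over-$\nu$ error bound, but I must check carefully that its error $o(f(n-k)^{-q})$ is genuinely $o(1)$ for \emph{every} realization of $\mu$ in the high-probability class (not merely on average), and that the constants hidden in ``$O$'' there do not secretly depend on which balanced diagram or which $\mu$ is chosen — the statement of Theorem~\ref{thm:distinct} asserts this, so it should go through, but aligning the quantifiers (``for all $\mu$ in a good set, uniformly over all balanced $\nu$'') is where care is needed. A secondary nuisance is optimizing the thresholds $\ell(n), f(n), g(n)$ so that all three constraints of Theorem~\ref{thm:distinct} and the $k = o(n^{1/4})$ constraint of the Gluing Lemma hold simultaneously on an event of probability $1 - o(1)$; the choices above (e.g.\ $\ell(n) = (\ln n)^3$) work but the verification that $k \le n^{1/4}$ w.h.p.\ uses a tail bound on $\sum_{i\le \ell} i\,m_i$ for the random permutation, which is routine but must be stated.
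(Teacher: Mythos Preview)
Your approach is essentially the paper's: split $\la(\si_n)=\mu\cup\tau$ by a cycle-length threshold, apply Theorem~\ref{thm:distinct} to $\mu$, then the Gluing Lemma to attach $\tau$, and verify the hypotheses hold on an event of probability $\to 1$. The paper packages this as Theorem~\ref{thm:main_general} and then specializes with $f(n)=(\ln n)^3$, $g(n)=(\ln n)^2$.

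There is one genuine gap. You write $\mu=(m_1>\dots>m_t)$ and invoke Theorem~\ref{thm:distinct}, which \emph{requires the parts of $\mu$ to be distinct}. You never verify this; your phrase ``number $t$ of distinct long cycle lengths'' even suggests repeats are allowed, but then Theorem~\ref{thm:distinct} does not apply. The paper handles this with Lemma~\ref{lem:equalcycles}: the probability that a uniform $\si_n$ has two cycles of the same length exceeding $M$ is at most $1/M$, so with $M=\ell(n)\to\infty$ the long cycles have distinct lengths with probability $\to 1$. You should insert this step; without it the argument does not go through.

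There is also an arithmetic slip in step three: from $k\le n^{1/4}$ you get $k^2 n^{-1/2}\le 1$, not $o(1)$. Fix this either by tightening the probabilistic bound on $k$ (e.g.\ $\Prob(k>n^{1/8})\to 0$ already follows from $\mathbb{E}k=\Theta(\ell)$ and Markov) or, as the paper does, by using the deterministic bound $k\le \ell(n)\cdot c(\si_n)\le (\ln n)^{O(1)}$ on the good event, which gives $k^2 n^{-1/2}=o(1)$ immediately.
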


The first part of Theorem~\ref{thm:main1} is Theorem~\ref{thm:main} from the Introduction.
In fact, we prove the following more general version.

\begin{theorem}\label{thm:main_general}
    Let $f(n)$ and $g(n)$ be two functions satisfying the following conditions as $n\to\infty$:
    $f$~is (eventually) monotone increasing, 
    $g(n) / \ln n\to\infty$, $g(n) = O(f(n) / \ln n)$, 
    and $f(n)g(n)=o(n^{1/4})$. Denote, for any $s>0$,
    \[
        \ve_n := \frac1{f^s(n/2)} + \frac{f^2(n)g^2(n)}{\sqrt n},
        \quad 
        \theta_n
        := \frac{3 \ln n}{g(n)}.
    \]
    If $\si_n$ is chosen uniformly at random in~$\S_n$, and
    \[
        \langle L_{\la(\si_n)},s_{\nu_n} \rangle 
        = \frac{f^{\nu_n}}{z_{\la(\si_n)}} (1+R_n), 
    \]
    then, for some positive constant $C$, 
    \[
        \Prob\,(|R_n| > C \ve_n) <\theta_n
    \]
    for sufficiently large $n$ and for balanced (and hence for Plancherel-typical) $\nu=(\nu_1,\nu_2,\ldots)$. 
\end{theorem}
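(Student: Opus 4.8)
The plan is to reduce the analysis of a random $\la(\si_n)$ to a ``large part plus small remainder'' decomposition, and then apply the three structural results already in hand: Theorem~\ref{thm:distinct} for the large part, the Gluing Lemma (Lemma~\ref{l:gluing}) for gluing on the small remainder, and standard facts about random permutation cycle structure to control the probability of the ``bad event.''  Concretely, write the cycle type of a uniform $\si_n\in\S_n$ in the form $\la=\mu\cup\tau$, where $\mu$ consists of the cycle lengths that are \emph{distinct} and \emph{large} (say, each part $\ge f(n)$, and with at most $g(n)$ parts), and $\tau$ collects the remaining small and/or repeated parts.  I would first argue that, with probability at least $1-\theta_n$, the random permutation falls into a ``good'' regime: (i) the number of cycles of length $\ge f(n)$ is at most $g(n)$; (ii) removing them leaves a remainder $\tau$ of total size $k=|\tau|=o(n^{1/4})$ (in fact $k=O(f(n)g(n))$ under control, matching the hypothesis $f(n)g(n)=o(n^{1/4})$); and (iii) the large parts are pairwise distinct, so $\mu$ genuinely has distinct row lengths.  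Each of these is a routine estimate on the Ewens/uniform cycle structure: the number of $j$-cycles is approximately Poisson$(1/j)$ independently, so the expected number of cycles of length in $[f(n),n]$ is $\sum_{j\ge f(n)}1/j=O(\ln n/f(n))$, the expected number of coincidences among large cycle lengths is small, and the total mass in small/repeated cycles concentrates.  The bound $\theta_n = 3\ln n/g(n)$ is exactly what a first-moment (Markov) bound on the number of large cycles gives, up to the constant, which is why $g(n)/\ln n\to\infty$ is assumed.

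On the good event, $\mu\vdash n-k$ has distinct parts, at most $g(n)$ of them, and smallest part $\ge f(n)$.  Theorem~\ref{thm:distinct} (applied with $n-k$ in place of $n$; note $n-k\sim n$ and $f(n-k)\ge f(n/2)$ eventually by monotonicity of $f$ and $k=o(n)$) then gives
\[
    \langle L_\mu,s_\al\rangle = \frac{f^\al}{z_\mu}(1+R_\al),
    \qquad |R_\al| = o\!\left(f(n-k)^{-s}\right) = O\!\left(f(n/2)^{-s}\right),
\]
uniformly over balanced $\al\vdash n-k$.  This is precisely the ``tends to be regular strongly and uniformly'' hypothesis of the Gluing Lemma, with $\omega(n-k) = O(f(n/2)^{-s})$.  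Since $k = O(f(n)g(n)) = o(n^{1/4})$ and $\tau$'s largest part is below $\mu$'s smallest part by construction, Lemma~\ref{l:gluing} applies and yields
\[
    \langle L_\la,s_\nu\rangle = \frac{f^\nu}{z_\mu z_\tau}(1+R_{\la,\nu}),
    \qquad |R_{\la,\nu}| \le O\!\left(\omega(n-k) + k^2 n^{-1/2}\right).
\]
Because $z_\la = z_\mu z_\tau$ (the parts of $\mu$ and $\tau$ are disjoint, so the $z$-factors multiply), this is exactly $\langle L_\la,s_\nu\rangle = \frac{f^\nu}{z_\la}(1+R_n)$ with $|R_n| \le O(f(n/2)^{-s} + f(n)^2 g(n)^2 n^{-1/2}) = O(\ve_n)$, using $k = O(f(n)g(n))$.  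Thus on the good event $|R_n| \le C\ve_n$ for a suitable constant $C$, and the good event has probability at least $1-\theta_n$; this is the claimed $\Prob(|R_n| > C\ve_n) < \theta_n$.  The first part of the theorem (tending in probability to be regular) follows because one may choose $f,g$ growing slowly enough (e.g.\ $f(n)=g(n)=\ln^2 n$, or $f=\exp(\sqrt{\ln n})$) so that both $\ve_n\to 0$ and $\theta_n\to 0$.

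The main obstacle I anticipate is step (ii)--(iii): getting a \emph{clean, quantitative} statement that, except on an event of probability $<\theta_n$, the complement of the ``large distinct parts'' is genuinely small ($k = O(f(n)g(n)) = o(n^{1/4})$) \emph{and} the large parts are distinct.  The distinctness of large cycle lengths is easy (expected number of collisions among lengths $\ge f(n)$ is $O(\sum_{j\ge f(n)} (1/j)^2\cdot n) $ — wait, more carefully $O(\sum_j \binom{\text{count}_j}{2})$, whose expectation is $\sum_j \Theta(1/j^2)=O(1/f(n))$), but one has to be a little careful about how much total mass the small-or-repeated cycles can carry: a typical permutation has $\Theta(\ln n)$ cycles total, most of them short, so the \emph{number} of leftover parts is fine, but I need the leftover to fit inside the Gluing Lemma's regime $k=o(n^{1/4})$, and a uniform permutation can have a cycle of length comparable to $n$.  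The resolution is definitional: such a giant cycle is itself a ``large distinct part'' and belongs to $\mu$, not $\tau$; $\tau$ consists only of parts $<f(n)$ together with the (few) repeated values among the large parts, and one bounds $|\tau| \le f(n)\cdot(\#\text{small cycles}) + (\#\text{large cycles})\cdot(\text{max large part})$, controlling each factor on the good event.  Making this bookkeeping airtight — choosing the threshold and the event so that all of Theorem~\ref{thm:distinct}'s hypotheses ($t\le g$, $m_t\ge f$, $g=O(f/\ln n)$, $g=o(\sqrt n)$) and the Gluing Lemma's hypothesis ($k=o(n^{1/4})$) hold simultaneously — is the delicate part, but it is purely a matter of careful choice of parameters together with elementary tail bounds on independent Poisson-like cycle counts.
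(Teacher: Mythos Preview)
Your proposal is correct and follows essentially the same approach as the paper: split $\la(\si_n)=\mu\cup\tau$ by the threshold $f(n)$, use Lemma~\ref{lem:equalcycles} and a Markov bound on the cycle count to define a good event of probability $\ge 1-\theta_n$, apply Theorem~\ref{thm:distinct} to $\mu$, and then the Gluing Lemma to attach~$\tau$. One simplification the paper makes that dissolves the ``obstacle'' you anticipate: the good event bounds the \emph{total} number of cycles by $g(n)$ (since $E\,c(\si_n)\sim\ln n$), not just the number of large ones, which immediately gives both $t\le g(n)$ for $\mu$ and $|\tau|\le f(n)g(n)$ with no further bookkeeping; also note that $\sum_{j\ge f(n)}1/j\approx\ln(n/f(n))$, not $\ln n/f(n)$.
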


In order to prove Theorem~\ref{thm:main_general}, 
we need the following lemma.

\begin{lemma}\label{lem:equalcycles}
    The probability for a random permutation, uniformly distributed on~$\S_n$, to have two cycles of the same length $m$ for some $m > M$  is at most $1/M$.
\end{lemma}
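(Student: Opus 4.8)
The claim is about the expected number of pairs of equal-length cycles of length exceeding $M$, combined with a Markov-type estimate. The plan is to compute, for each fixed $m$, the probability that a uniform random permutation in $\S_n$ has at least two cycles of length exactly $m$, sum these probabilities over $m > M$, and conclude by the union bound. Throughout I write $m_m(\si)$ for the number of $m$-cycles of a permutation $\si \in \S_n$.

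\medskip

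\textbf{Step 1: the distribution of cycle counts.} Recall the classical fact that for a uniform random $\si \in \S_n$, the number of $m$-cycles $m_m(\si)$ has the same first two factorial moments as a Poisson random variable with parameter $1/m$, provided $2m \le n$. Concretely,
\[
    \mathbb{E}\binom{m_m(\si)}{2}
    = \frac{|\{\si \in \S_n : \si \text{ has two marked } m\text{-cycles}\}|}{n!}
    = \frac{1}{2m^2}
\]
for $2m \le n$ (and of course $\mathbb{E}\binom{m_m(\si)}{2} = 0$ when $2m > n$, since two disjoint $m$-cycles cannot fit). The count in the numerator is obtained by choosing an ordered pair of disjoint $m$-subsets and a cyclic arrangement of each: $\frac{1}{2}\binom{n}{m}(m-1)!\binom{n-m}{m}(m-1)!(n-2m)!= n!/(2m^2)$.

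\medskip

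\textbf{Step 2: from $\binom{m_m}{2}$ to the event "$m_m \ge 2$".} Since $\binom{k}{2} \ge 1$ whenever $k \ge 2$, we have
\[
    \Prob\bigl(m_m(\si) \ge 2\bigr)
    \le \mathbb{E}\binom{m_m(\si)}{2}
    = \frac{1}{2m^2}
\]
for every $m$ (the bound being vacuous, indeed an equality with both sides $0$, when $2m > n$).

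\medskip

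\textbf{Step 3: union bound and summation.} The event "$\si$ has two cycles of the same length $m$ for some $m > M$" is $\bigcup_{m > M}\{m_m(\si) \ge 2\}$, so
\[
    \Prob\bigl(\exists\, m > M : m_m(\si) \ge 2\bigr)
    \le \sum_{m > M} \frac{1}{2m^2}
    \le \frac{1}{2}\sum_{m > M}\frac{1}{m(m-1)}
    = \frac{1}{2M}
    \le \frac{1}{M},
\]
using the telescoping bound $\sum_{m > M} \frac{1}{m(m-1)} = \frac{1}{M}$. This gives the claimed inequality.

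\medskip

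\textbf{Main obstacle.} There is no real obstacle here; the only point requiring a little care is the exact evaluation of $\mathbb{E}\binom{m_m(\si)}{2}$ and the justification that it equals $1/(2m^2)$ precisely when $2m \le n$ (and is $0$ otherwise) — one must not simply invoke the infinite-$n$ Poisson heuristic but verify the finite-$n$ count, which is an elementary multinomial computation as sketched in Step 1. If one prefers to avoid even this, an alternative is a cruder first-moment bound: $\Prob(m_m(\si) \ge 2) \le \Prob(m_m(\si) \ge 1) \le 1/m$ together with a more delicate argument, but this does not sum to a bounded quantity, so the second-moment route in Step 1 is essentially forced.
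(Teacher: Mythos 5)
Your proposal is correct and follows essentially the same route as the paper: both bound $\Prob(\text{two cycles of length } m)$ by the expected number of pairs of $m$-cycles, computed by the same combinatorial count $n!/(2m^2)$, and then sum over $m>M$ (you via telescoping, the paper via an integral comparison) to get the bound $1/M$. Your version is marginally sharper (yielding $1/(2M)$), but the argument is the same.
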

\begin{proof}
    Consider the number of permutations $\si\in\S_n$ that have two cycles of a fixed length $m$. To determine such a permutation, we can choose the $2m$ elements that belong to these two cycles, choose how they are distributed between the cycles and how they are organized inside the cycles, and then choose a~permutation of the remaining $n-2m$ elements. Thus, the number in question is at most
    \[
        \binom{n}{2m} \cdot \frac{1}{2} \binom{2m}{m} ((m-1)!)^2\cdot (n-2m)!
        = \frac{n!}{2m^2}.
    \]
    Therefore, the probability for a random permutation to have two cycles of the same length $m$ is at most~$m^{-2}$. It follows that the probability to have two cycles of arbitrary length $m>M$ is at most
    \[
        \sum_{m=M+1}^{\lfloor n/2\rfloor}\frac 1{m^2}
        \le \int_{M}^{\infty}\frac1{x^2}dx 
        = \frac1{M}.
        \qedhere
    \]
\end{proof}

\begin{proof}[Proof of Theorem~\ref{thm:main_general}.] 
    Choose $\si_n$ uniformly at random in $\S_n$, and write its cycle type $\la=\la(\si_n)$ as a disjoint union $\la=\mu\cup\tau$, where $\mu$ is the part of $\la$ consisting of the rows of length $> f(n)$ and $\tau$ is the part consisting of the rows of length $\le f(n)$.
    
    By Lemma~\ref{lem:equalcycles}, for sufficiently large $n$ all rows of $\mu$ are distinct with probability at least $1-\frac{1}{f(n)}$. 
    
    Denote by $c(\si)$ the number of cycles of a permutation $\si$. It is well known (see, e.g., \cite[(6.5)]{Feller}) that for a random permutation $\si_n\in\S_n$ we have 
    $Ec(\si_n)\sim\ln n$ as $n\to\infty$, where by $E$ we denote the expectation of a random variable. 
    Then, by Markov's inequality, for sufficiently large $n$,
    \[
        \operatorname{Prob}(c(\si_n) > g(n))
        \le \frac{Ec(\si_n)}{g(n)}
        < \frac{2 \ln n}{g(n)}.
    \]
    Assume, from now on, that all the rows of $\mu$ are distinct and that $c(\si_n) \le g(n)$.
    For sufficiently large $n$, this happens with probability at least $1 - \frac{1}{f(n)} - \frac{2 \ln n}{g(n)} > 1 - \frac{3 \ln n}{g(n)} = 1-\theta_n$.
    Note that, by assumption, $g(n) \to \infty$ and $f(n) g(n) = o(n^{1/4})$, thus $f(n) = o(n^{1/4})$, hence $g(n) = O(f(n) / \ln n) = o(n^{1/4}) = o(n^{1/2})$.
    Thus all the assumptions of Theorem~\ref{thm:distinct} are satisfied, so that
    \[
        \langle L_{\mu},s_\nu \rangle = \frac{f^\nu}{z_{\mu}}(1+R_n)
        \quad \text{where} \quad
        |R_n| \le \om(n) = o(f(n)^{-q}),
    \] 
    for any fixed (arbitrarily large) $q$.
    
    As for $\tau$, all its rows are of length at most $f(n)$ and
    there are at most $g(n)$ of them. 
    The total size of $\tau$ is $k \le f(n) g(n)$, so by assumption $k = o(n^{1/4})$. 
    The assumptions of the Gluing Lemma (Lemma~\ref{l:gluing}) are satisfied with $\om(n) = o\!\left(f(n)^{-q} \right)$, so
    \[
        \langle L_{\la},s_\nu \rangle 
    = \frac{f^\nu}{z_\la}(1+R'_n)
    \]
    where, by monotonicity of $f$,
    \[
        |R'_n| 
        = O(f(n-k)^{-q} + k^2 n^{-1/2})
        = O(f(n/2)^{-q} + f(n)^2 g(n)^2 n^{-1/2}) \le C \ve_n.
    \]
    This completes the proof of the theorem.
\end{proof}

\begin{proof}[Proof of Theorem~\ref{thm:main1}.] 
It suffices to take any functions $f(n)$ and $g(n)$ satisfying the assumptions of  Theorem~\ref{thm:main1} --- for instance,
$f(n)=\ln^3n$ and $g(n)=\ln^2n$. 
Since
$f(n)\to \infty$ and $f^2(n)g^2(n))=o(\sqrt n)$, we have 
$\ve_n\to 0$. Also, $g(n) / \ln n\to\infty$, thus $\theta_n\to 0$. 
\end{proof}

\begin{remark}
    There is a trade-off between the sizes of the error term $\ve_n$ and the error probability~$\theta_n$. Choosing the functions $f(n)$ and $g(n)$ appropriately, one can optimize whatever one wants. 
\end{remark}

For example, to achieve a good error term $\ve_n$, take $f(n) = n^{\delta'}$ for an arbitrarily small $0<\delta'<1/4$ and $g(n) = 3 (\ln n)^2$. 
On the other hand, to get a good error probability $\theta_n$, take  $f(n)=n^{1/8}$ and $g(n)=n^{1/8}/\ln n$. 
Thus Theorem~\ref{thm:main_general} yields the following.


\begin{corollary}\label{thm:main_cor1}
    Assume that $\si_n$ is chosen uniformly at random in~$\S_n$ and denote
    \[
        \langle L_{\la(\si_n)},s_{\nu_n} \rangle 
        = \frac{f^{\nu_n}}{z_{\la(\si_n)}} (1+R_n). 
    \]
    Then, for arbitrarily small $\delta>0$, 
    \begin{itemize}
        \item[(a)] 
        \[
            \Prob\,(|R_n| > n^{-1/2+\delta}) 
            < 1 / \ln n 
        \]
        \item[(b)]
        and
        \[
            \Prob\,(|R_n| > 1/\ln n) 
            < n^{-1/8 + \delta}
        \]

    \end{itemize}
    for sufficiently large $n$ and for balanced (and hence for Plancherel-typical) $\nu=(\nu_1,\nu_2,\ldots)$. 
\end{corollary}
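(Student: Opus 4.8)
The plan is to derive Corollary~\ref{thm:main_cor1} directly from Theorem~\ref{thm:main_general} by two suitable choices of the auxiliary functions $f(n)$ and $g(n)$, checking in each case that the hypotheses of that theorem are met and simplifying the resulting expressions for $\ve_n$ and $\theta_n$.

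For part (a), take $f(n) = n^{\delta'}$ with $0 < \delta' < 1/4$ chosen so that $2\delta' < \delta$ (say $\delta' = \delta/2$, assuming WLOG $\delta < 1/4$), and $g(n) = 3(\ln n)^2$. First I would verify the hypotheses: $f$ is monotone increasing; $g(n)/\ln n = 3\ln n \to \infty$; $g(n) = O(f(n)/\ln n)$ since $n^{\delta'}/\ln n$ dominates $(\ln n)^2$; and $f(n) g(n) = 3 n^{\delta'} (\ln n)^2 = o(n^{1/4})$ because $\delta' < 1/4$. Then $\theta_n = 3\ln n / g(n) = 1/\ln n$, giving the stated error probability. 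For the error term, picking $s$ large enough (depending on $\delta'$), $f^{-s}(n/2) = (n/2)^{-s\delta'}$ is negligible compared to $n^{-1/2+\delta}$, and $f^2(n) g^2(n)/\sqrt n = 9 n^{2\delta'} (\ln n)^4 n^{-1/2} = n^{-1/2 + 2\delta'}(\ln n)^4 \cdot 9$, which is $o(n^{-1/2+\delta})$ by the choice $2\delta' < \delta$ (the logarithmic factor is absorbed). Hence $C\ve_n < n^{-1/2+\delta}$ for large $n$, and the bound $\Prob(|R_n| > n^{-1/2+\delta}) < 1/\ln n$ follows.

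For part (b), take $f(n) = n^{1/8}$ and $g(n) = n^{1/8}/\ln n$. Again I would check: $f$ monotone increasing; $g(n)/\ln n = n^{1/8}/(\ln n)^2 \to \infty$; $g(n) = O(f(n)/\ln n)$ trivially (equality); and $f(n) g(n) = n^{1/4}/\ln n = o(n^{1/4})$. Then $\theta_n = 3\ln n / g(n) = 3(\ln n)^2 / n^{1/8} = o(n^{-1/8+\delta})$ for any $\delta > 0$, giving the claimed error probability. Meanwhile $\ve_n = f^{-s}(n/2) + f^2(n) g^2(n)/\sqrt n$; the first term is a negative power of $n$, and the second is $n^{1/4} \cdot n^{1/4}/(\ln n)^2 \cdot n^{-1/2} = 1/(\ln n)^2 = o(1/\ln n)$, so $C\ve_n < 1/\ln n$ for large $n$, and $\Prob(|R_n| > 1/\ln n) < \theta_n < n^{-1/8+\delta}$.

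This is essentially bookkeeping once Theorem~\ref{thm:main_general} is in hand, so there is no genuine obstacle; the only mild subtlety is choosing the constant $s$ (and hence the implied constant $C$ in $C\ve_n$) large enough in part (a) that the $f^{-s}(n/2)$ term and the logarithmic factors are swallowed by the gap between $2\delta'$ and $\delta$, and remembering that the statement is about $|R_n|$ exceeding the clean thresholds $n^{-1/2+\delta}$ and $1/\ln n$ rather than the raw $C\ve_n$, which is why one needs strict inequalities $2\delta' < \delta$ and $f^2 g^2/\sqrt n = o(1/\ln n)$ respectively rather than mere $O$-bounds.
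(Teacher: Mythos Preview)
Your proposal is correct and follows exactly the same approach as the paper: the paper's proof consists solely of specifying the two choices $f(n)=n^{\delta'}$, $g(n)=3(\ln n)^2$ for part~(a) and $f(n)=n^{1/8}$, $g(n)=n^{1/8}/\ln n$ for part~(b), leaving the verifications you have spelled out to the reader. Your additional care in choosing $2\delta'<\delta$ and tracking the logarithmic factors is appropriate and fills in details the paper omits.
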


\section{Variations}\label{sec:variations}

\subsection{Virtual permutations} 

If, considering the behavior of random higher Lie characters, we want to achieve convergence almost surely instead of convergence in probability, we should introduce a probability measure on {\em infinite sequences} of higher Lie characters. A natural way to achieve this is to consider the space of virtual permutations, introduced in~\cite{KOV} in the context of harmonic analysis on the infinite symmetric group.

Let $\pi_{n,n+1}:\S_{n+1} \to \S_{n}$ be the map that, given a permutation $\si\in\S_{n+1}$, deletes the element $n+1$ from its cycle. The  {\it space of virtual permutations} $\X:=\varprojlim S_n$ is the inverse limit of finite symmetric groups $\S_n$ with respect to these maps. In other words, a virtual permutation, i.e., an element of $\X$, is a sequence $\omega =(\si_1,\si_2,\ldots) \in \S_1\times\S_2\times\ldots$ such that $\si_{n+1}$ is obtained from $\si_n$ by either inserting $n+1$ into one of the existing cycles, or adding $n+1$ as a fixed point; we say that such a sequence of permutations is {\it coherent}.
It is easy to see that the sequence of uniform measures on $\S_n$ agrees with the projections $\pi_{n,n+1}$, so the inverse limit of these measures is a probability measure $M$ on $\X$, which is called the {\it Haar measure} on $\X$. By definition, the projection of $M$ to $\S_n$ coincides with the uniform measure.

\begin{proposition}\label{lem:virtual}
    Let $m_n$ be an increasing sequence of positive integers such that $m_n = \Omega(n^{9})$. 
    Then, for almost every (with respect to the Haar measure) virtual permutation $\sigma =(\si_1,\si_2,\ldots) \in \S_1\times\S_2\times\ldots$,  
    \[
        \langle \psi^{\la(\si_{m_n})},\chi^{\nu_{m_n}} \rangle
        \sim \frac{f^{\nu_{m_n}}}{z_{\la(\si_{m_n})}} 
        \quad \text{as} \quad n \to \infty
   \]
   for balanced (and hence for Plancherel-typical) $\nu=(\nu_1,\nu_2,\ldots)$.
\end{proposition}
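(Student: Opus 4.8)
The natural route is to transfer the in-probability statement of Theorem~\ref{thm:main_general} to an almost-sure statement along the sparse subsequence $(m_n)$ via the Borel--Cantelli lemma. First I would apply Theorem~\ref{thm:main_general} with suitably chosen auxiliary functions $f$ and $g$ to produce, for each fixed index $k$, a bound of the form $\Prob(|R_{m_k}| > C\ve_{m_k}) < \theta_{m_k}$, where $R_{m_k}$ is the error term in $\langle L_{\la(\si_{m_k})},s_{\nu_{m_k}}\rangle = \frac{f^{\nu_{m_k}}}{z_{\la(\si_{m_k})}}(1+R_{m_k})$, and $\nu$ is an arbitrary fixed balanced sequence. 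Here the key point is that the marginal of the Haar measure on $\X$ on the $m_k$-th coordinate $\S_{m_k}$ is exactly the uniform measure, so $\si_{m_k}$ really is uniform in $\S_{m_k}$ and Theorem~\ref{thm:main_general} applies verbatim to each $\si_{m_k}$.

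The crux is then a summability check: I need to choose $f$ and $g$ so that both $\ve_{m_k} \to 0$ and $\sum_k \theta_{m_k} < \infty$. Recall $\theta_n = 3\ln n / g(n)$, so along the subsequence $\theta_{m_k} = 3\ln m_k / g(m_k)$, and the hypothesis $m_k = \Omega(k^9)$ gives $\ln m_k = \Omega(\ln k)$ but also $m_k^\beta = \Omega(k^{9\beta})$ for any $\beta > 0$. Taking $g(n) = n^{1/8}/\ln n$ (as in the proof of Corollary~\ref{thm:main_cor1}(b)), one gets $\theta_{m_k} = O\!\big((\ln m_k)^2 / m_k^{1/8}\big) = O\!\big((\ln k)^2 / k^{9/8}\big)$, which is summable in $k$ since $9/8 > 1$ — this is exactly where the exponent $9$ in $m_n = \Omega(n^9)$ is used. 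With $f(n) = n^{1/8}$ one checks $f(n)g(n) = o(n^{1/4})$, $g(n)/\ln n \to \infty$, $g(n) = O(f(n)/\ln n)$, and $f$ monotone increasing, so all hypotheses of Theorem~\ref{thm:main_general} hold; moreover $\ve_n = f(n/2)^{-s} + f(n)^2 g(n)^2/\sqrt n \to 0$, so $C\ve_{m_k} \to 0$ as $k \to \infty$.

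Given summability, Borel--Cantelli yields that, with $M$-probability one, the event $\{|R_{m_k}| > C\ve_{m_k}\}$ occurs for only finitely many $k$; hence for almost every virtual permutation $\si = (\si_1,\si_2,\ldots)$ we have $|R_{m_k}| \le C\ve_{m_k} \to 0$, i.e.\ $R_{m_k} \to 0$, which is precisely the asserted asymptotic equivalence
\[
    \langle \psi^{\la(\si_{m_k})},\chi^{\nu_{m_k}} \rangle
    = \langle L_{\la(\si_{m_k})}, s_{\nu_{m_k}}\rangle
    \sim \frac{f^{\nu_{m_k}}}{z_{\la(\si_{m_k})}}
    \quad \text{as } k \to \infty.
\]
One small subtlety is that the almost-sure exceptional null set a priori depends on the chosen balanced test sequence $\nu$; but the statement is ``for balanced $\nu$'' with $\nu$ fixed in advance (exactly as in Theorem~\ref{thm:main_general}), so this is not an issue — the quantifier order is: fix $\nu$ balanced, then for $M$-a.e.\ $\si$ the conclusion holds.

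\textbf{Expected main obstacle.} The only real work is the bookkeeping in the summability estimate — verifying that the exponent $9$ is exactly what makes $\sum_k \theta_{m_k}$ converge while simultaneously keeping $\ve_{m_k} \to 0$ under all the structural constraints linking $f$ and $g$. Everything else (the marginal identification for the Haar measure, the application of Theorem~\ref{thm:main_general}, and Borel--Cantelli) is routine.
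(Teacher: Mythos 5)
Your proposal is correct and follows essentially the same route as the paper: the paper applies Corollary~\ref{thm:main_cor1}(b) (which is exactly Theorem~\ref{thm:main_general} with the choice $f(n)=n^{1/8}$, $g(n)=n^{1/8}/\ln n$ you make), uses $m_n=\Omega(n^9)$ to get a summable bound on the error probabilities, and concludes by Borel--Cantelli, just as you do.
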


\begin{proof}
By the definition of the Haar measure on $\X$, for each $n$ the permutation $\si_n$ is uniformly distributed on~$\S_n$. Let
    \[
        \langle \psi^{\la(\si_{m_n})},\chi^{\nu_{m_n}} \rangle 
        = \frac{f^{\nu_{m_n}}}{z_{\la(\si_{m_n})}} (1+R_{m_n}(\sigma)). 
    \]
Fix an arbitrary $\ve>0$ and denote by $A_{m_n}$ the set of all virtual permutations $\si$
for which $|R_{m_n}(\si)|\ge\ve$. 
By Corollary~\ref{thm:main_cor1}(b),  
for sufficienlty large $n$
we have (for a suitable $\delta > 0$)
    \[
        \Prob\,(A_{m_n}) 
        < {m_n}^{-1/8 + \delta}
        < n^{-10/9}.
    \]
It follows that the series 
$\sum_{n=1}^\infty \Prob\,(A_{m_n})$ converges. Then, by the Borel--Cantelli lemma,
with probability~$1$ only finitely many of the events $A_{m_n}$ occur, which means that, with probability $1$, for all sufficiently large $n$ we have $|R_{m_n}(\si)|<\ve$. Thus, 
$R_{m_n}(\si)\to0$ as $n\to\infty$
for a.e.\ $\si$.
\end{proof}

\begin{conjecture}\label{conj:virtual}
    For almost every virtual permutation $\si =(\si_1,\si_2,\ldots) \in \S_1\times\S_2\times\ldots$ (with respect to the Haar measure) and for almost every $\nu$ (with respect to the Plancherel measure), 
    \[
        \langle \psi^{\la(\si_n)},\chi^{\nu_n} \rangle
        \sim \frac{f^{\nu_n}}{z_{\la(\si_n)}} 
        \quad \text{as} \quad n \to \infty .
    \]
\end{conjecture}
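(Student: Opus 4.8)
To prove Conjecture~\ref{conj:virtual} the natural plan is to upgrade the quantitative estimate behind Proposition~\ref{lem:virtual} so that the Borel--Cantelli lemma can be applied along the \emph{full} sequence $n=1,2,\ldots$ rather than along the sparse subsequence $m_n=\Omega(n^9)$. Fix $\ve>0$; it suffices to show $\sum_n \Prob(|R_n|>\ve)<\infty$, since then a.s.\ $|R_n|<\ve$ for all large $n$, and intersecting over $\ve=1/k$ gives $R_n\to 0$ a.s. Here $R_n$ is a function of the pair $(\si_n,\nu_n)$; because every deterministic estimate in Sections~\ref{sec:1row}--\ref{sec:asymp} is valid for all balanced test sequences, and a Plancherel-typical $\nu$ is balanced, Fubini reduces the conjecture to bounding this probability in $\si$ alone, for an arbitrary fixed balanced $\nu$ --- so the ``almost every $\nu$'' clause is not the difficulty.

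As in the proof of Theorem~\ref{thm:main_general}, write $\la(\si_n)=\mu\cup\tau$, where $\mu$ collects the parts of size $>f(n)$ and $\tau$ the rest; conditionally on the ``good'' event $\{c(\si_n)\le g(n)\}\cap\{\mu\text{ has distinct parts}\}$, Theorem~\ref{thm:distinct} and the Gluing Lemma (Lemma~\ref{l:gluing}) give $|R_n|\le C\ve_n=o(1)$ deterministically. Two inputs must be sharpened. First, the bound on $\Prob(c(\si_n)>g(n))$ obtained from Markov's inequality is far too weak; using the probability generating function $\sum_{\si\in\S_n} x^{c(\si)}/n!=\binom{x+n-1}{n}$ (equivalently the Feller coupling $c(\si_n)=\sum_{i=1}^n\xi_i$ with independent $\xi_i\sim\mathrm{Bernoulli}(1/i)$) one gets a Chernoff bound $\Prob(c(\si_n)>A\ln n)\le n^{-\kappa(A)}$ with $\kappa(A)\to\infty$ as $A\to\infty$; taking $g(n)=A\ln n$ with $A$ large makes this summable, and then $f(n)=\ln^3 n$ (say) still satisfies all the hypotheses of Theorems~\ref{thm:distinct} and~\ref{l:gluing}.

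The genuine obstacle is the second input: $\Prob(\mu\text{ has a repeated part})$. Lemma~\ref{lem:equalcycles} is essentially sharp --- the dominant contribution coming from cycle lengths just above the threshold --- so this probability is $\asymp 1/f(n)\ge 1/n$, which is \emph{never} summable for admissible $f$. Hence one cannot discard the permutations with a repeated large cycle; instead one must prove $\langle L_\la,s_\nu\rangle\sim f^\nu/z_\la$ \emph{even when $\mu$ contains repeated parts of medium size}, i.e.\ between $f(n)$ and roughly $\sqrt n$ (repeated parts of size $\gg\sqrt n$ are already covered by Theorem~\ref{thm:rectangular}, since then the corresponding sub-diagram of $\nu$ is balanced at the block's scale, and a.s.\ there are only boundedly many of them). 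Writing $\mu=\bigcup_j(m_j^{r_j})$ with distinct $m_j$, we have $L_\mu=\prod_j h_{r_j}[\Lie_{m_j}]$, and after the Littlewood--Richardson split of $\nu$ one needs $\langle h_{r_j}[\Lie_{m_j}],s_{\beta_j}\rangle\sim f^{\beta_j}/(m_j^{r_j}r_j!)$ for the sub-diagrams $\beta_j\vdash m_jr_j$ that arise. The difficulty is that balancedness of $\nu\vdash n$ does \emph{not} force $\beta_j$ to be balanced as a diagram of size $m_jr_j$ when $m_jr_j=o(n)$, and the naive asymptotic genuinely fails for $\beta_j$ close to a single row or column --- for instance $\langle L_{(m,m)},s_{(2m)}\rangle=0$ for all $m\ge 2$, since $h_2[\Lie_m]$ vanishes under the specialization $p_k\mapsto 1$. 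Thus the new ingredient required is a \emph{robust} rectangular (plethystic) estimate: the analogue, for $h_r[\Lie_m]$ with bounded $r$, of the ``$c$-good'' form of Theorem~\ref{prop:Lie}(a) for $r=1$, valid uniformly in $m$ and $r$ over the relevant ranges, together with a proof that the $\beta_j$ which fail to be ``$c$-good'' contribute negligibly in aggregate --- paralleling the $R_2,R_3$ analysis in the proof of Theorem~\ref{thm:distinct}, but now with plethystic factors and with skew-shape bounds drawn from Theorem~\ref{t:DousseFeray} and~\cite{GutierrezRosas}. Establishing this uniform plethystic estimate is where I expect the real work to lie; granting it, the conditional bound $|R_n|\le C\ve_n$ holds on the event $\{c(\si_n)\le A\ln n\}$ whose complement has summable probability, Borel--Cantelli applies, and the conjecture follows. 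A conceptually different route, which sidesteps repeated cycles but seems harder to make precise, would exploit the coherence (martingale) structure of virtual permutations to interpolate the conclusion of Proposition~\ref{lem:virtual} from the subsequence $m_n$ to all $n$.
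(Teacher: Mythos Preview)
The paper does not prove this statement: it is stated as Conjecture~\ref{conj:virtual}, left open, with only the weaker subsequence result Proposition~\ref{lem:virtual} established. So there is no ``paper's own proof'' to compare against; the relevant question is whether your proposal closes the gap the authors could not.

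It does not. You yourself locate the obstruction precisely and then assume it away. Your analysis is sound up to that point: sharpening the tail bound on $c(\si_n)$ from Markov to Chernoff via the Feller coupling is correct and does make $\Prob(c(\si_n)>A\ln n)$ summable; and you are right that Lemma~\ref{lem:equalcycles} is essentially sharp, so that for any admissible threshold $f(n)$ the probability of a repeated large cycle is of order $1/f(n)$ and hence never summable. That is exactly why the authors stopped at a conjecture. Your proposed remedy---prove the rectangular/plethystic asymptotic $\langle h_r[\Lie_m],s_\beta\rangle\sim f^\beta/(m^r r!)$ uniformly over the relevant range of $m,r$ and over sub-diagrams $\beta$ that need not be balanced at scale $mr$---is the right target, but you do not prove it; you write ``granting it, \ldots\ the conjecture follows.'' The example you give, $\langle L_{(m,m)},s_{(2m)}\rangle=0$, shows the estimate genuinely fails pointwise for degenerate $\beta$, so one would need an aggregate argument (your proposed $R_2,R_3$-style analysis) whose success is not at all clear: the skew-shape bounds from Theorem~\ref{t:DousseFeray} are calibrated to balanced outer shapes, and the sub-diagrams $\beta_j$ arising from a Littlewood--Richardson split of a balanced $\nu$ into pieces of size $o(n)$ are typically far from balanced at their own scale.

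In short: your write-up is a lucid diagnosis of why Conjecture~\ref{conj:virtual} is hard, not a proof of it. The alternative route you mention at the end---exploiting the martingale/coherence structure of virtual permutations to interpolate from the sparse subsequence of Proposition~\ref{lem:virtual}---is an interesting idea, but as you say, making it precise is another matter.
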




    

\subsection{Conjugacy characters}\label{sec:conjugacy}

Replacing, in the construction of higher Lie characters described in~\eqref{eq:wreathproduct}, the primitive irreducible character 
$\zeta_k$ of $\bbz_k$ by any character of the form $\zeta_k^r$ for $r \in \bbz$,
we obtain the more general character 
\[
    \psi^\la_r:=\Ind_{Z_\la}^{\S_n}\left(\id_{m_1}[\zeta_1^r]\otimes\id_{m_2}[\zeta_2^r]\otimes\ldots\otimes\id_{m_n}[\zeta_n^r]\right),
\]
where $\id_{m_i}[\zeta_i^r]$ is the wreath product of $\zeta_i^r$ by the trivial character $\id_{m_i}$ of $\S_{m_i}$. Clearly, an analog of~\eqref{eq:hlcformula} also holds:
\[
   \psi^\la_r =\Ind_{\S_{m_1}[\S_1]\times\S_{m_2}[\S_2]\times\ldots}^{\S_n}
\left(\id_{m_1}[\psi_r^{(1)}]\otimes\id_{m_2}[\psi_r^{(2)}]\otimes\ldots\right),
\]
where $\S_{m_i}[\S_i]$ is the wreath product of $\S_i$ by $\S_{m_i}$ and $\id_{m_i}[\psi_r^{(i)}]$ is the wreath product of the character $\psi_r^{(i)}$ of $\S_i$ 
(corresponding to the one-row diagram $(i)$)
by the trivial character $\id_{m_i}$ of $\S_{m_i}$.

\begin{theorem}\label{thm:conjugacy}
All 
the previous results in this paper, in particular Theorems~\ref{main:rectangular} and~\ref{thm:main},
are valid for $\psi_r^\la$, for arbitrary $r$, instead of the higher Lie character~$\psi^\la = \psi_1^\la$.
\end{theorem}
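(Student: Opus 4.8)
The plan is to observe that the proofs of all the preceding results use the higher Lie characters $\psi^\la$ only through three features, each of which has an exact analogue for $\psi_r^\la$: (i) the plethystic product formula, i.e.\ the analogue of~\eqref{eq:hlcformula} recorded in Subsection~\ref{sec:conjugacy}, which for a partition $\mu=(m_1,\dots,m_t)$ with \emph{distinct} parts specializes to $\ch(\psi_r^\mu)=\prod_{i=1}^t\ch(\psi_r^{(m_i)})$; (ii) the dimension identity $\dim\psi_r^\la=[\S_n:Z_\la]=|C_\la|=n!/z_\la$, which holds because $\psi_r^\la$ is induced from a one-dimensional character of $Z_\la$; and (iii) the one-row asymptotics of Theorem~\ref{prop:Lie}, together with the two-sided bound $0\le\langle\psi_r^{(m)},\chi^\nu\rangle\le f^\nu$ for every $\nu\vdash m$. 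Features (i) and (ii) are immediate from the definitions, and the bound in~(iii) follows from Frobenius reciprocity: $\langle\psi_r^{(m)},\chi^\nu\rangle$ equals the multiplicity of $\zeta_m^r$ in the restriction of $\chi^\nu$ to $\bbz_m$, hence lies between $0$ and $\dim\chi^\nu=f^\nu$. (In the original arguments this bound was obtained from $\sum_{\mu\vdash m}\psi^\mu=\Reg_m$, but that identity is never used in any other way.)

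The core is the one-row case, where I would invoke the appropriate generalization of the Kra\'skiewicz--Weyman theorem: for every $\nu\vdash n$,
\[
    \langle\psi_r^{(n)},\chi^\nu\rangle
    =\bigl|\{\,T\in\SYT(\nu)\,:\,\maj(T)\equiv r\pmod n\,\}\bigr|,
\]
which reduces to Theorem~\ref{thm:KW} when $r=1$. This is a well-known refinement of Theorem~\ref{thm:KW}; it also follows from the power-sum expansion $\ch(\psi_r^{(n)})=\tfrac1n\sum_{e\mid n}c_e(r)\,p_e^{n/e}$, where $c_e(r)$ is a Ramanujan sum, combined with the standard evaluation of $\sum_{T\in\SYT(\nu)}q^{\maj(T)}$ at roots of unity. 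Since Swanson's bound (Theorem~\ref{thm:Swanson}) is stated for \emph{every} residue $r$, the estimate $\bigl|\langle\psi_r^{(n)},\chi^\nu\rangle/f^\nu-1/n\bigr|\le 2n^{3/2}/\sqrt{f^\nu}$ holds verbatim, and therefore Theorem~\ref{prop:Lie} --- both its statement and the error bounds (a) and (b) --- carries over word for word with $\psi^{(n)}$ replaced by $\psi_r^{(n)}$.

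With (i)--(iii) and the generalized Theorem~\ref{prop:Lie} in hand, the remaining task is to revisit each subsequent proof and check that nothing else about $\psi^\la$ is used. In the proof of Theorem~\ref{thm:rectangular} one replaces $\Lie_m$ by $\ch(\psi_r^{(m)})$ and $L_{(m^k)}$ by $h_k[\ch(\psi_r^{(m)})]$; the Schur identity $\sum_{\la\vdash m}f^\la s_\la=h_1^m$, the Guti\'errez--Rosas containment property of $h_j[s_\la]$, and Roichman's character bound are untouched, while the estimate $|r_\la|\le m^{-s}$ for $c$-good $\la$ is now supplied by the generalized Theorem~\ref{prop:Lie}(a). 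In the proof of Theorem~\ref{thm:distinct} one uses $\ch(\psi_r^\mu)=\prod_i\ch(\psi_r^{(m_i)})$ for the distinct-part $\mu$, and the Hopf-algebra and Littlewood--Richardson manipulations, Corollary~\ref{lem:sub_generic}, Lemma~\ref{lem:strong_Vershik_Kerov}, Roichman's bound, and the inequality $0\le 1+r_{\nu_i}\le m_i$ (a consequence of~(iii)) all go through unchanged. In the Gluing Lemma one uses the identity $\langle\ch(\psi_r^\tau),h_1^k\rangle=\dim\psi_r^\tau=|C_\tau|=k!/z_\tau$ from~(ii) together with Lemma~\ref{lem:strong_Vershik_Kerov}. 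Finally, Theorems~\ref{thm:main1} and~\ref{thm:main_general}, Corollaries~\ref{cor:hooks} and~\ref{thm:main_cor1}, and Proposition~\ref{lem:virtual} combine the above with Lemma~\ref{lem:equalcycles} and the estimate $Ec(\si_n)\sim\ln n$, neither of which refers to $\psi^\la$. Hence every statement of the paper holds for $\psi_r^\la$.

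The one genuinely nontrivial ingredient is the generalized Kra\'skiewicz--Weyman identity, equivalently the full strength of the one-row bound of Theorem~\ref{prop:Lie}(a), valid for all $\nu$ with $\nu_1,\nu_1'\le n-c$ rather than merely for balanced $\nu$: the cruder power-sum-plus-Roichman estimate yields only the weaker balanced version, which is insufficient to run the $c$-good/$c$-bad dichotomy in the proof of Theorem~\ref{thm:rectangular} when $k\to\infty$. Once that combinatorial description of $\langle\psi_r^{(n)},\chi^\nu\rangle$ is available, the rest of the argument is routine bookkeeping.
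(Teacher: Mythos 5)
Your proposal is correct and follows essentially the same route as the paper: observe that all earlier arguments use $\psi^\la$ only through the one-row asymptotics (Theorem~\ref{prop:Lie}) and the wreath-product/plethysm structure, and that the former carries over to $\psi_r^{(n)}$ via the Kra\'skiewicz--Weyman identity for arbitrary residue $r$ combined with Swanson's bound, which is already stated for all $r$. Your extra care in replacing the use of Theorem~\ref{t:sum_of_hLc} (which fails for general $r$) by the Frobenius-reciprocity bound $0\le\langle\psi_r^{(m)},\chi^\nu\rangle\le f^\nu$, and in recording $\dim\psi_r^\la=n!/z_\la$ for the Gluing Lemma, addresses two points the paper's proof passes over silently and makes the bookkeeping airtight.
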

\begin{proof}
It is easy to observe that all the proofs in the previous sections use only the following two facts: 
(a) Theorem~\ref{prop:Lie} about the asymptotics of the ordinary Lie character $\psi^{(n)}$ corresponding to the one-row diagram $\la=(n)$; 
(b) the wreath product construction~\eqref{eq:wreathproduct} of a higher Lie character $\psi^\la$, for an arbitrary $\la$, from ordinary Lie characters. By construction, the structure~(b) is the same for $\psi_r^\la$. 
The proof of Theorem~\ref{prop:Lie}, in turn, uses only the Kra\'skiewicz--Weyman formula (Theorem~\ref{thm:KW})
\[
    \langle\psi^{(n)},\chi^\la\rangle
    =\#\{T\in\operatorname{SYT}(\la)\,\colon \operatorname{maj}T\equiv 1 \!\!\!\pmod n\} =: a_{n,1}
\]
    and Swanson's result (Theorem~\ref{thm:Swanson}) about the asymptotics of $a_{n,1}$. However, both results hold in a more general situation. Namely,  according to Kra\'skiewicz--Weyman~\cite[second corollary to Theorem 1]{KraskiewiczWeyman},
\[
    \langle\psi^{(n)}_r,\chi^\la\rangle=
    \#\{T\in\operatorname{SYT}(\la)\,\colon \operatorname{maj}T\equiv r \!\!\!\pmod n\} =: a_{n,r}
\]
holds for any $r$, 
and Swanson's result is already stated for any $r$. Thus, Theorem~\ref{prop:Lie} holds for $\psi_r^\la$ as well, which implies, according to the observation above, that all the previous results in this paper also hold for the character $\psi_r^\la$.
\end{proof}

In particular, the case $r=0$ corresponds to the conjugacy characters of $\S_n$. Namely, consider the linear
representation $\pi_C$ of $\S_n$ on the group algebra $\CC[\S_n]$ defined by the action of $\S_n$ on itself by conjugation: $\pi_C(g)(h)= ghg^{-1}$.
Clearly, for each $\la\vdash n$, the subspace $H_\la\subset\mathbb C[\S_n]$ spanned by the
permutations with cycle structure $\la$ is invariant under $\pi_C$. The {\em conjugacy character} $\phi^\la$ of $\S_n$ is the character of the restriction
of $\pi_C$ to $H_\la$. It is not difficult to see that
$\phi^\la=\psi_0^\la$, i.e.,
\[
    \phi^\la
    = \Ind_{Z_\la}^{\S_n}\id_{Z_\la}
    = \Ind_{Z_\la}^{\S_n}\left( \id_{m_1}[\iota_1] \otimes \id_{m_2}[\iota_2] \otimes \ldots \otimes \id_{m_n}[\iota_n] \right)
\]
where $\iota_k=\zeta_k^0$ is the trivial character of~$\bbz_k$.

\begin{corollary}\label{cor:conjugacy}
   All the previous results in this paper are valid for the conjugacy character $\phi^\la$ instead of the higher Lie character~$\psi^\la$. 
\end{corollary}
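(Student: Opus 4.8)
The plan is to obtain this corollary as an immediate specialization of Theorem~\ref{thm:conjugacy}. The only substantive point is the identification $\phi^\la = \psi_0^\la$, which is the case $r=0$ of the family $\psi_r^\la$ introduced in Subsection~\ref{sec:conjugacy}. First I would record this identity explicitly. The conjugation action $\pi_C$ of $\S_n$ on $\CC[\S_n]$ restricts to the subspace $H_\la$ spanned by the permutations of cycle type $\la$, and this permutation action of $\S_n$ on the basis $C_\la$ of $H_\la$ is transitive with point stabilizer (of a basis element $h$ of cycle type $\la$) equal to the centralizer $Z_\la$; moreover $Z_\la$ fixes the line $\CC h$ pointwise, since $ghg^{-1}=h$ for $g\in Z_\la$. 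Hence $H_\la \cong \Ind_{Z_\la}^{\S_n} \id_{Z_\la}$ as $\S_n$-modules, and since $\id_{Z_\la}$ decomposes under $Z_\la \simeq \prod_i \S_{m_i}[\bbz_i]$ as $\id_{m_1}[\iota_1]\otimes \id_{m_2}[\iota_2]\otimes\cdots$ with $\iota_k = \zeta_k^0$ the trivial character of $\bbz_k$, this is exactly $\psi_0^\la$.

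Having established $\phi^\la=\psi_0^\la$, I would simply apply Theorem~\ref{thm:conjugacy}, which asserts that every result preceding Subsection~\ref{sec:conjugacy} (in particular Theorems~\ref{main:rectangular}, \ref{thm:rectangular}, \ref{thm:distinct}, the Gluing Lemma~\ref{l:gluing}, Corollary~\ref{cor:hooks}, and Theorems~\ref{thm:main}, \ref{thm:main1}, \ref{thm:main_general}) remains valid with $\psi^\la$ replaced by $\psi_r^\la$ for arbitrary $r \in \bbz$. Taking $r=0$ and using $\phi^\la=\psi_0^\la$ yields all of these statements for the conjugacy characters $\phi^\la$, which is the content of the corollary.

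There is essentially no obstacle here: the heavy lifting was already done in the proof of Theorem~\ref{thm:conjugacy}, where it is observed that the only inputs used throughout the paper are (a) the asymptotics of the one-row character (Theorem~\ref{prop:Lie}), which for $\psi_r^{(n)}$ follows from the Kra\'skiewicz--Weyman congruence $\langle \psi_r^{(n)},\chi^\nu\rangle = \#\{T\in\SYT(\nu): \maj T\equiv r \pmod n\}$ together with Swanson's estimate (Theorem~\ref{thm:Swanson}, stated for all $r$), and (b) the wreath-product construction~\eqref{eq:wreathproduct}, whose shape is unchanged when $\zeta_k$ is replaced by $\zeta_k^r$. Thus the corollary is a one-line consequence of Theorem~\ref{thm:conjugacy} and the identity $\phi^\la=\psi_0^\la$, and the proof need only cite these.
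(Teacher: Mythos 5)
Your proposal is correct and follows essentially the same route as the paper: identify $\phi^\la$ with $\psi_0^\la$ via $\phi^\la = \Ind_{Z_\la}^{\S_n}\id_{Z_\la}$ (with $\id_{Z_\la}$ decomposing as the wreath product of trivial characters $\iota_k=\zeta_k^0$), and then invoke Theorem~\ref{thm:conjugacy} with $r=0$. Your explicit justification of the induced-module identification (transitivity of the conjugation action on $C_\la$ with point stabilizer $Z_\la$ acting trivially on the corresponding line) merely fills in what the paper dismisses as ``not difficult to see.''
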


\subsection{Characters arising from symmetric group actions on (co)homology}

Characters close to higher Lie characters appear in natural symmetric group actions on certain homology and cohomology groups.

For instance, Stanley~\cite{Stanley1982} proved that the character $\pi_n$ of the action of $\S_n$ on the top component of the cohomology ring of the partition lattice $\Pi_n$,  
which is isomorphic to the cohomology ring of the pure braid group and to that of the Orlik--Solomon algebra of the braid hyperplane arrangement, 
is equal to $\ve_n\psi^{(n)}$, where $\ve_n$ is the sign character of $\S_n$.

\begin{corollary}
    The sequence of characters $\pi_n$ tends to be regular.    
\end{corollary}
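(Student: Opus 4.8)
The plan is to deduce this from Theorem~\ref{prop:Lie} by showing that tensoring with the sign character $\ve_n$ does not disturb the tending-to-be-regular property. First I would recall the classical fact that tensoring an $\S_n$-character with $\ve_n$ transposes the indexing diagram: for every $\nu\vdash n$ one has $\langle\ve_n\rho_n,\chi^\nu\rangle=\langle\rho_n,\chi^{\nu'}\rangle$, where $\nu'$ is the conjugate partition. Applying this with $\rho_n=\psi^{(n)}$ and $\pi_n=\ve_n\psi^{(n)}$ gives
\[
    \langle\pi_n,\chi^\nu\rangle=\langle\psi^{(n)},\chi^{\nu'}\rangle.
\]
Since $\dim\pi_n=\dim\psi^{(n)}=(n-1)!$ and $f^{\nu'}=f^\nu$, dividing through by $\dim\pi_n$ shows that the error term $r_\nu$ for $\pi_n$ at $\nu$ equals the error term $r_{\nu'}$ for $\psi^{(n)}$ at $\nu'$.

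Next I would check that the test sequence behaves well under transposition. If $\nu=(\nu^{(n)})$ is a balanced sequence in the sense of Definition~\ref{def:balanced}, then the first row and first column of $\nu^{(n)}$ are both $O(\sqrt n)$; but conjugation merely swaps rows and columns, so $\nu'=((\nu^{(n)})')$ is again balanced with the same implied constant. (Likewise, the Plancherel measure is invariant under transposition, so a Plancherel-typical sequence stays Plancherel-typical.) Therefore, by Theorem~\ref{prop:Lie} applied to the balanced sequence $\nu'$, we have $\langle\Lie_n,s_{(\nu^{(n)})'}\rangle\sim f^{(\nu^{(n)})'}/n = f^{\nu^{(n)}}/n$, i.e. $r_{\nu'}\to 0$, hence $r_\nu\to 0$, which is exactly~\eqref{eq:tends_to_regular2} for $\rho_n=\pi_n$.

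There is essentially no obstacle here; the only point requiring (trivial) care is bookkeeping the dimensions, namely that $\dim\pi_n=(n-1)!$ rather than $n!$, so that the normalization in Definition~\ref{def:regularity} matches $f^\nu/n$ and not $f^\nu/n!$ --- but this is precisely the normalization already appearing in Theorem~\ref{prop:Lie}. One could also import the quantitative bounds~(a) and~(b) of Theorem~\ref{prop:Lie} verbatim, replacing $\nu$ by $\nu'$ throughout, since the hypotheses there ($\nu_1,\nu_1'\le n-c$, or $\nu$ balanced) are themselves transposition-symmetric.
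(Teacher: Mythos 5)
Your argument is correct and is essentially the paper's own proof: the sign twist you use at the character level is exactly the involution $\omega$ on symmetric functions used in the paper, so both reduce to Theorem~\ref{prop:Lie} evaluated at $s_{\nu'}$ together with $f^{\nu'}=f^{\nu}$. Your extra remark that balancedness (and Plancherel-typicality) is preserved under transposition just makes explicit a step the paper leaves implicit.
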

\begin{proof}
    Consider the classical involution $\omega$ on symmetric functions.
    By Theorem~\ref{prop:Lie}, for Plancherel-typical $\nu\vdash n$,
    \[
        \langle \ch\,\pi_n,s_\nu \rangle
        =\langle \omega\Lie_n,s_\nu \rangle
        =\langle \Lie_n,\omega s_\nu \rangle
        =\langle \Lie_n,s_{\nu'} \rangle
        \sim \frac{f^{\nu'}}{n}
        =\frac{f^{\nu}}{n}
        \quad \text{as} \quad n \to \infty. \qedhere
    \]
\end{proof}


More generally, one can show that all the previous proofs in this paper remain valid if in~\eqref{eq:hlcformula} we replace the higher Lie character $\psi^{(i)}$ by $\pi^{(i)}:=\varepsilon_i\psi^{(i)}$, and/or replace some of the trivial characters~$\id_{m_i}$ by~$\varepsilon_{m_i}$. In particular, all our results are valid for the characters
\[
    \Ind_{\S_{m_1}[\S_1]\times\S_{m_2}[\S_2]\times\ldots}^{\S_n}
    \left(\id_{m_1}[\pi^{(1)}]\otimes\varepsilon_{m_2}[\pi^{(2)}]\otimes\ldots
    \otimes\id_{m_{2i-1}}[\pi^{(2i-1)}]\otimes\varepsilon_{m_{2i}}[\pi^{(2i)}]\otimes\ldots\right),  
\]
which appear in the description of the actions of $\S_n$ on the Whitney homology of the partition lattice (see~\cite[Theorem~2.14]{Hersh}) and on the cohomology of the configuration space of points in $\bbr^d$ (see~\cite[Theorem~2.7]{Hersh}).

\section{Further questions}\label{sec:questions}

\subsection{Thresholds}

In Theorem~\ref{thm:rectangular}, regarding rectangular diagrams, it is assumed that the height of the rectangle is asymptotically smaller than its width. 
Indeed, there are families of rectangles which do not tend to be regular. Here is an important example. 

\begin{proposition}\label{prop:2xn}
    The sequence of higher Lie characters indexed by rectangular diagrams $\la=(2^k)$ with constant width $m=2$ and height $k\to\infty$ does not tend to be regular.
\end{proposition}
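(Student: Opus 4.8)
The plan is to show that the error term $R_\nu$ in $\langle L_{(2^k)}, s_\nu \rangle = \frac{f^\nu}{z_{(2^k)}}(1+R_\nu)$ does \emph{not} tend to zero, by exhibiting a balanced (indeed Plancherel-typical) test sequence $\nu$ along which the multiplicity deviates from $f^\nu/(2^k k!)$ by a non-vanishing relative amount. The natural starting point is the known decomposition in this case: the partitions $\la = (2^k 1^{n-2k})$ (and in particular the rectangle $\la = (2^k)$, where $n = 2k$) are exactly the family for which Thrall's coefficients $c^\la_\nu = \langle \psi^\la, \chi^\nu \rangle$ are $0$ or $1$, and they are controlled by a clean combinatorial rule. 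Using the wreath-product/plethystic formula~\eqref{eq:hlcformula}, $L_{(2^k)} = h_k[L_{(2)}]$, and since $\Lie_2 = L_{(2)} = e_2 = s_{(1,1)}$, we get the very explicit identity $L_{(2^k)} = h_k[s_{(1,1)}] = h_k[e_2]$, whose Schur expansion is classically known (by a result of Littlewood): $h_k[e_2] = \sum_\nu s_\nu$, summed over all partitions $\nu \vdash 2k$ whose conjugate $\nu'$ has all parts even --- equivalently, all columns of $\nu$ of even length, i.e.\ $\nu$ has even column lengths; more precisely $h_k[e_2]=\sum s_{\nu}$ over $\nu$ with $\nu'$ even. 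So $\langle L_{(2^k)}, s_\nu \rangle \in \{0,1\}$ for every $\nu$, and equals $1$ precisely on this measure-zero-looking family of shapes.

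The key point is then a \emph{dimension count}: if $\psi^{(2^k)}$ tended to be regular, we would need $\langle L_{(2^k)}, s_\nu \rangle \sim f^\nu / (2^k k!)$ for Plancherel-typical $\nu \vdash 2k$. But for Plancherel-typical $\nu$ we have $f^\nu = (2k)! \cdot \exp(-(1+o(1))\sqrt{2k})$ roughly (more precisely $\log(f^\nu/\sqrt{(2k)!}) = -(1+o(1))\sqrt{2k}\cdot\text{const}$ by Vershik--Kerov), so $f^\nu/(2^k k!) \to \infty$ super-polynomially, whereas $\langle L_{(2^k)}, s_\nu \rangle \le 1$ always. Hence along \emph{any} sequence the ratio $\langle L_{(2^k)}, s_\nu\rangle/(f^\nu/(2^kk!))$ is bounded above by $2^k k!/f^\nu$, which tends to $0$ (it does not stay near $1$). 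Concretely: either $\langle L_{(2^k)}, s_{\nu^{(2k)}}\rangle = 0$, in which case $R_{\nu^{(2k)}} = -1 \not\to 0$; or it equals $1$, in which case $1 + R_{\nu^{(2k)}} = 2^k k!/f^{\nu^{(2k)}} \to \infty$, so again $R_{\nu^{(2k)}} \not\to 0$. Either way~\eqref{eq:tends_to_regular2} fails, so $\psi^{(2^k)}$ does not tend to be regular.

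To make this rigorous against the definition, I would argue that for a Plancherel-typical sequence $\nu = (\nu^{(n)})$, the subsequence $(\nu^{(2k)})_{k\ge1}$ almost surely fails~\eqref{claim} for $\la = (2^k)$: indeed $2^k k! / f^{\nu^{(2k)}} = 2^k k!/\bigl(\sqrt{(2k)!}\,e^{-\Theta(\sqrt{k})}\bigr)$, and since $2^k k! = o(\sqrt{(2k)!}\,)$ by Stirling (as $\binom{2k}{k} \ge 4^k/(2k+1) \gg 2^k k!/\sqrt{(2k)!}$... more directly $\sqrt{(2k)!}/(2^k k!) = \sqrt{\binom{2k}{k}/2^k}\cdot$ correction $\to\infty$), the quantity $2^k k!/f^{\nu^{(2k)}}$ is bounded and in fact $\to 0$; so when the multiplicity is $1$ we have $R \to +\infty$ along that subsequence, contradicting $R_n \to 0$. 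The one genuinely delicate point --- the main obstacle --- is handling the case where $\langle L_{(2^k)}, s_{\nu^{(2k)}}\rangle = 0$ versus $1$ for the \emph{specific} Plancherel-random $\nu^{(2k)}$: one must rule out the degenerate possibility that the limit in~\eqref{eq:tends_to_regular2} is ill-posed because both sides could be interpreted loosely. I would resolve this simply by noting that regardless of which of the two values occurs, the ratio $\langle L_{(2^k)},s_\nu\rangle \cdot 2^k k!/f^\nu$ lies in $\{0\}\cup\{2^kk!/f^\nu\}$ and hence tends to $0 \ne 1$; so~\eqref{eq:tends_to_regular2} --- which asserts this ratio tends to $1$ --- fails, and a fortiori $\psi^{(2^k)}$ does not tend to be regular. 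I would also remark that this shows the hypothesis $k = o(m)$ in Theorem~\ref{thm:rectangular} is essential, not merely a convenience, and that the true obstruction is the mismatch of growth rates: $\dim \psi^{(2^k)} = z_{(2^k)}^{-1}(2k)! = (2k)!/(2^k k!)$, which is exponentially smaller than $(2k)!$, so a single character this small simply cannot be close to the regular one in the present normalization.
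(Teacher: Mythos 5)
Your route is the same as the paper's: via~\eqref{eq:hlcformula} and $\Lie_2=e_2$ you write $L_{(2^k)}=h_k[e_2]$, invoke the classical expansion $h_k[e_2]=\sum s_\nu$ over $\nu\vdash 2k$ with all columns of even length, conclude that every multiplicity is $0$ or $1$, and observe that this is incompatible with $\langle L_{(2^k)},s_\nu\rangle\sim f^\nu/(2^k k!)$. That is exactly the paper's proof; the paper stops at ``hence \eqref{claim} obviously fails,'' while you try to spell out why.

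The spelled-out part, however, has the asymptotics backwards and is internally inconsistent. You claim $2^k k!=o(\sqrt{(2k)!}\,)$ and hence $f^\nu/(2^k k!)\to\infty$; in fact $\sqrt{(2k)!}/(2^k k!)=\sqrt{\binom{2k}{k}}\big/2^k=\sqrt{\binom{2k}{k}/4^k}\sim(\pi k)^{-1/4}\to 0$ (your formula $\sqrt{\binom{2k}{k}/2^k}$ is the slip), so $f^\nu\le\sqrt{(2k)!}=o(2^k k!)$ for \emph{every} $\nu\vdash 2k$, and for Plancherel-typical $\nu$ one even has $f^\nu/(2^k k!)\to 0$ at rate $e^{-\Theta(\sqrt k)}$. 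You also assert in one sentence that $2^k k!/f^\nu\to 0$ and in the next that it tends to $+\infty$. Fortunately the conclusion does not depend on the direction, and your dichotomy closes the argument once stated correctly: if $\langle L_{(2^k)},s_\nu\rangle=0$ then $1+R_\nu=0$, while if it equals $1$ then
\[
1+R_\nu=\frac{2^k k!}{f^\nu}\ \ge\ \frac{2^k k!}{\sqrt{(2k)!}}\ \sim\ (\pi k)^{1/4}\ \to\ \infty ,
\]
so in neither case does the ratio tend to $1$ and \eqref{claim} fails for any (in particular any balanced or Plancherel-typical) test sequence. Please fix the Stirling computation and remove the contradictory claims; with that repair the write-up is a correct, slightly more quantitative version of the paper's argument.
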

\begin{proof}
    It is easy to see that $\Lie_2=e_2$. By~\eqref{eq:hlcformula}, we thus have 
    \[
        L_{(2^k)} = h_k[\Lie_2] = h_k[e_2].
    \]
    By \cite[Ex.~I.8.6]{Macdonald},
    \[
        h_k[e_2]
        = \sum_{\nu \vdash 2k,\, \text{all columns even}} s_\nu,
    \]
    so $\langle L_{(2^k)},s_\nu\rangle$ is either $0$ or $1$ for every $\nu$, hence \eqref{claim} obviously fails. 
\end{proof}

\begin{problem}
    What is the threshold for the height $k$, as a function of the width $m$, for a sequence of higher Lie characters indexed by rectangular diagrams $(m^k)$ to tend to be regular?
\end{problem}





An analogous problem can be posed for hook shapes.

\begin{problem}
    What is the threshold for the leg length $k$, as a function of the total size $n$, for a sequence of higher Lie characters indexed by hook shapes $(n-k,1^k)$ to tend to be regular?
\end{problem}

Corollary~\ref{cor:hooks} provides a lower bound for the threshold. Here is an upper bound.

\begin{proposition}\label{prop:hooks_t}
    For any $\ve > 0$, any sequence of higher Lie characters indexed by hook shapes $(n-k,1^k)$ with $k> (2 + \ve) \sqrt n$ for infinitely many values of $n$ does not tend to be regular.    
\end{proposition}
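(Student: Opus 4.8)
The plan is to show that the multiplicity $\langle \psi^{(n-k,1^k)}, \chi^\nu \rangle$ fails the estimate~\eqref{claim} for a well-chosen balanced test diagram $\nu$, exploiting the fact that a hook with a long leg forces its Littlewood--Richardson constituents to have a long column. Concretely, I would first pick a balanced $\nu \vdash n$ --- for instance the (near-)square diagram with $\nu_1 = \nu_1' = \lceil\sqrt n\,\rceil$ --- and observe that $\nu_1' \le (1 + \ve/3)\sqrt n$ for $n$ large, whereas the hook $\la = (n-k,1^k)$ with $k > (2+\ve)\sqrt n$ has first column $\la_1' = k+1 > (2+\ve)\sqrt n$. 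The key structural input is the decomposition $\psi^\la = \Ind_{Z_\la}^{\S_n}(\dots)$, equivalently $L_\la = h_{m_1}[\Lie_1]\cdot\prod_{i\ge 2} h_{m_i}[\Lie_i]$; for a hook $\la=(n-k,1^k)$ we have $m_1 = k$ (the $k$ parts equal to $1$) and one part equal to $n-k$, so $L_\la = h_k[\Lie_1]\cdot \Lie_{n-k} = h_k \cdot \Lie_{n-k}$ (since $\Lie_1 = h_1$). Then, using~\eqref{eq:Hopf},
\[
    \langle L_\la, s_\nu \rangle
    = \langle h_k \cdot \Lie_{n-k}, s_\nu \rangle
    = \sum_{\beta \vdash k} f^{\nu/\beta}\,\langle \Lie_{n-k}, s_\beta \rangle,
\]
but $\langle \Lie_{n-k}, s_\beta\rangle \ne 0$ forces $\beta \vdash n-k$, so in fact the sum has a single term: $\langle L_\la, s_\nu \rangle = f^{\nu/\beta_0}\cdot\langle \Lie_{n-k}, s_{\beta_0}\rangle$ is impossible unless $k = n-k$. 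The cleaner route is to dualize.

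The main step is therefore to compare $\langle L_\la, s_\nu\rangle$ with its expected value $f^\nu/z_\la = f^\nu/(n-k)$ and exhibit a linear growth obstruction. Apply the involution $\omega$: since $\omega\, h_k = e_k$ and $\omega\,\Lie_{n-k} = \Lie_{n-k}^{\ve} := \ve_{n-k}\psi^{(n-k)}$ has Frobenius image supported (by Theorem~\ref{thm:KW}) on diagrams of all shapes, we get $\langle L_\la, s_\nu\rangle = \langle e_k\cdot \Lie_{n-k}, \omega s_\nu\rangle = \langle e_k\cdot\Lie_{n-k}, s_{\nu'}\rangle$. Now $e_k = s_{(1^k)}$, so by the Littlewood--Richardson/Pieri rule $\langle e_k\cdot \Lie_{n-k}, s_{\nu'}\rangle = \sum_{\beta} \langle \Lie_{n-k}, s_\beta\rangle$, the sum over $\beta\vdash n-k$ obtained from $\nu'$ by deleting a vertical strip of size $k$; this is nonzero only if $\nu'$ has at least $k$ rows, i.e.\ $\nu$ has a column of length $\ge k$. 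Choosing $\nu$ balanced with $\nu_1' = O(\sqrt n) < (2+\ve)\sqrt n < k$ (possible for the infinitely many $n$ in the hypothesis), we conclude $\langle L_\la, s_\nu\rangle = 0$, while $f^\nu/z_\la = f^\nu/(n-k) > 0$. Hence~\eqref{claim} fails for these $\nu$ and $n$, and since such $n$ occur infinitely often, the sequence does not tend to be regular.

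The one point requiring care --- and the expected main obstacle --- is verifying that the chosen balanced $\nu$ is a legitimate test diagram in the sense of Definition~\ref{def:balanced} while simultaneously having $\nu_1'$ strictly below $(2+\ve)\sqrt n$: we must guarantee both $\nu_1, \nu_1' = O(\sqrt n)$ and $\max\{\nu_1,\nu_1'\} \le k$ for the relevant $n$. Since $k > (2+\ve)\sqrt n$, taking $\nu$ with $\nu_1 = \nu_1' \approx \sqrt n$ (e.g.\ the balanced hook $(\sqrt n, 1^{\sqrt n})$, or a square) gives $\nu_1' \approx \sqrt n < (2+\ve)\sqrt n \le k$, so the vertical-strip condition fails and the inner product vanishes identically; this is robust and needs only elementary estimates. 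A secondary check is that $z_\la = z_{(n-k,1^k)} = (n-k)\cdot k!$ (not $n-k$), so $f^\nu/z_\la$ is still a positive quantity bounded away from $0$ relative to $f^\nu$ only after dividing by $\dim\psi^\la$; but for the qualitative conclusion "does not tend to be regular'' it suffices that $\langle L_\la,s_\nu\rangle = 0 \ne f^\nu$, i.e.\ the normalized multiplicity is $0$ rather than $\sim f^\nu/z_\la > 0$, which already contradicts~\eqref{eq:tends_to_regular2}.
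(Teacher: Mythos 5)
Your structural observation is the right one --- $L_{(n-k,1^k)} = \Lie_{n-k}\, h_k$, so by the Pieri rule the inner product $\langle L_{(n-k,1^k)}, s_\nu\rangle$ can be nonzero only when $\nu$ is obtained from some $\beta \vdash n-k$ by adding a horizontal strip of size $k$, which forces $\nu_1 \ge k$ (note: a long first \emph{row} of $\nu$, not a long column; your detour through $\omega$ is unnecessary, and after conjugating, ``$\nu'$ has at least $k$ rows'' translates back to $\nu_1 \ge k$, not $\nu'_1 \ge k$ --- this slip happens not to matter for your square test shape, but it is an error). The genuine gap is in how you negate ``tends to be regular''. Definition~\ref{def:regularity} requires the asymptotic relation to hold for \emph{Plancherel-typical} sequences $\nu$, so to disprove it you must exhibit failure along Plancherel-typical $\nu$, not along a hand-picked balanced sequence. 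Your test diagram (a near-square with $\nu_1 = \nu'_1 \approx \sqrt n$, or the balanced hook $(\sqrt n, 1^{\sqrt n})$) is balanced but is emphatically not Plancherel-typical --- typical diagrams follow the Vershik--Kerov limit shape with $\nu_1 \sim \nu'_1 \sim 2\sqrt n$ (Remark~\ref{rem:Plancherel_is_balanced}). Showing the relation fails at a measure-zero, atypical sequence says nothing about whether it holds for typical ones, so your argument does not establish the proposition. A telltale sign is that your argument would ``prove'' non-regularity already for $k > (1+\ve)\sqrt n$, whereas the threshold $(2+\ve)\sqrt n$ in the statement comes precisely from the limit-shape constant $2$.

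The repair is exactly the paper's proof: combine your Pieri support condition $\langle L_{(n-k,1^k)}, s_{\nu^{(n)}}\rangle \ne 0 \Rightarrow \nu^{(n)}_1 \ge k$ with the fact that $\nu^{(n)}_1 \sim 2\sqrt n$ for Plancherel-typical $\nu$. Then for the infinitely many $n$ with $k > (2+\ve)\sqrt n$ one has $\nu^{(n)}_1 < k$ eventually along any typical sequence, so the multiplicity vanishes for infinitely many $n$ while $f^{\nu^{(n)}}/z_{(n-k,1^k)} > 0$, and the relation~\eqref{claim} fails for Plancherel-typical $\nu$, as required. (Your closing remarks about $z_\la = (n-k)\,k!$ versus $n-k$ are immaterial once this is in place: $0$ is not asymptotic to any positive quantity.)
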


\begin{proof}
    Noting that $L_{(n-k,1^k)} = L_{(n-k)} L_{(1^k)} = L_{n-k} h_k$, the Pieri  
    rule implies that for any $\nu^{(n)} \vdash n$, $\langle L_{(n-k,1^k)}, s_{\nu^{(n)}} \rangle \ne 0 \Longrightarrow \nu^{(n)}_1 \ge k$. 
    Recalling that $\nu^{(n)}_1 \sim 2\sqrt n$ for a Plancherel-typical sequence $\nu$, one deduces that 
    if $k> (2 + \ve) \sqrt n$ for infinitely many values of $n$, then
    $\langle L_{(n-k,1^k)}, s_{\nu^{(n)}} \rangle = 0$ for infinitely many values of $n$.
    Thus $\psi^{(n-k,1^k)}$ does not tend to be regular.
\end{proof}



\subsection{Conjugacy characters 
in other groups}

An old problem in group representation theory is to determine the irreducible modules and their multiplicities in a conjugacy action of a finite group; see, e.g., \cite{Heide} and references therein.

It is well known that the conjugacy action of the symmetric group on itself tends to be regular; see, e.g., \cite{AF, R96}. 
This result is refined in Section~\ref{sec:conjugacy} above: 
by Theorem~\ref{thm:conjugacy}, the conjugacy action of $S_n$ on a random conjugacy class tends to be regular. 
It is natural to ask whether a similar phenomenon holds for finite groups of Lie type.  

Of special interest are the special linear groups.   
For an element $x\in G=SL_n(\FF_q)$, consider the action of $G$ by conjugation on the conjugacy class  of $x$. 
      
\begin{problem} 
    For which sequences of elements does this character tend 
    to be regular, 
    as $n\rightarrow \infty$ or $q\rightarrow \infty$ ?
\end{problem}

In particular, for a fixed finite field $\FF_q$, does an analogue of Theorem~\ref{thm:main1} hold for a sequence of uniformly random elements $x_n \in SL_n(\FF_q)$ ?

\subsection{Descent set distribution}

Definition~\ref{def:regularity} uses inner products with irreducible characters (equivalently, Schur functions). 
One can ask what happens if these are replaced, for example, by {\em ribbon Schur functions}, indexed by compositions.
Their inner products with higher Lie characters enumerate, by the Gessel--Reutenauer theorem~\cite[Theorem 2.1]{GR93}, permutations with a given descent set and a given cycle structure. 

\begin{problem}
    Do results analogous to the ones presented in this paper hold for ribbon Schur functions?
\end{problem}

    Asymptotic estimates on the descent set distribution of permutations in random conjugacy classes may follow. 







\end{document}